\documentclass[a4paper,11pt]{amsart}
\usepackage[top=2.2cm,left=2.8cm,right=2.8cm,bottom=2.8cm]{geometry}
\usepackage[foot]{amsaddr}

\usepackage{hyperref}
\hypersetup{
	colorlinks,
	citecolor=green,
	filecolor=black,
	linkcolor=blue,
	urlcolor=blue
}
\hypersetup{linktocpage}

\usepackage{cite}
\usepackage{graphicx}
\usepackage{amsbsy}
\usepackage{latexsym}
\usepackage{amsfonts}
\usepackage{amssymb}
\usepackage{upgreek}
\usepackage{cleveref}
\usepackage[usenames]{color}
\usepackage{amsmath,amsthm}
\usepackage{enumerate}
\usepackage{mathrsfs}
\usepackage{stmaryrd}
\usepackage{dsfont}
\usepackage{varwidth}
\usepackage{pdfpages}
\usepackage{comment}

\usepackage{tikz}
\usetikzlibrary{positioning}
\usetikzlibrary{arrows,shapes,math}
\usetikzlibrary{arrows.meta,calc,positioning,fit, matrix}
\usetikzlibrary{decorations.pathreplacing}
\usepackage{pgfplots}
\pgfplotsset{compat=1.18}
\usepgfplotslibrary{groupplots}

\DeclareMathOperator{\rank}{rank}

\providecommand{\abs}[1]{\lvert#1\rvert}

\providecommand{\Bigabs}[1]{\Bigl\lvert#1\Bigr\rvert}

\providecommand{\norm}[1]{\lVert#1\rVert}
\providecommand{\bignorm}[1]{\bigl\lVert#1\bigr\rVert}

\providecommand{\biggnorm}[1]{\biggl\lVert#1\biggr\rVert}

\renewcommand{\Re}{\operatorname{Re}}

\newtheorem{theorem}{Theorem}
\newtheorem{lemma}[theorem]{Lemma}
\newtheorem{prop}[theorem]{Proposition}
\newtheorem{cor}[theorem]{Corollary}

\theoremstyle{definition}

\theoremstyle{remark}
\newtheorem{remark}[theorem]{Remark}

\numberwithin{equation}{section}
\numberwithin{theorem}{section}

\theoremstyle{plain}

\newcommand{\cM}{{\mathcal{M}}}

\newcommand{\cF}{{\mathcal{F}}}
\newcommand{\cH}{{\mathcal{H}}}

\newcommand{\cI}{\mathcal{I}}
\newcommand{\cS}{\mathcal{S}}

\newcommand{\cW}{\mathcal{W}}

\newcommand{\Chi}{\raise .3ex
	\hbox{\large $\chi$}} 

\newcommand{\T}{{\mathbb{T}}}
\newcommand{\R}{\mathbb{R}}

\newcommand{\N}{\mathbb{N}}

\newcommand{\C}{\mathbb{C}}

\newcommand{\bbone}{\mathds{1}}

\usepackage[section]{algorithm}
\usepackage{algorithmicx}
\usepackage{algpseudocode}

\usepackage{todonotes}

\newcommand{\comp}{\mathbf{v}}
\newcommand{\exsol}{\mathbf{u}}
\newcommand{\fixedp}{\widehat{\mathbf{v}}}
\newcommand{\modfp}[1]{\mathbf{v}^{#1}}
\newcommand{\fpmap}{\cF}
\newcommand{\cfpmap}{\Phi}
\newcommand{\softt}{\mathcal{S}}
\newcommand{\recomp}{\mathcal{R}}
\newcommand{\nqp}{Q}
\newcommand{\mat}{\mu}

\newcommand{\compn}[1]{\comp^{(#1)}}
\newcommand{\exsoln}[1]{\exsol^{(#1)}}
\newcommand{\fixedpn}[1]{\fixedp^{(#1)}}

\newcommand{\initsol}{\widetilde{u}}

\newcommand{\normJ}[1]{\lVert#1\rVert_\nqp}

\newcommand{\normWg}[1]{\lVert #1\rVert_{\mathcal{W}(t_0,t_1)}}

\newcommand{\inp}[2]{\left<#1,#2\right>}

\newcommand{\altil}{\widetilde{\alpha}}
\newcommand{\albar}{\overline{\alpha}}
\newcommand{\timeint}{\mathcal{I}}

\newcommand{\Rbb}{\mathbb{R}}

\title[Iterative thresholding low-rank time integration]{Iterative thresholding low-rank time integration for high-dimensional problems}
\author{Markus Bachmayr, Tianyu Jin, Polina Sachsenmaier and Federico Vismara}
\address{Institut f\"ur Geometrie und Praktische Mathematik, RWTH Aachen University, Templergraben 55, 52062 Aachen, Germany}
\email{\{bachmayr,jin,sachsenmaier,vismara\}@igpm.rwth-aachen.de}

\thanks{The authors acknowledge funding by the European Union (ERC, COCOA, 101170147).}

\begin{document}
	
	\maketitle
	
	\vspace{-18pt}
	\begin{abstract}
		This work analyzes a method for time integration of high-dimensional linear Schr\"odinger-type problems based on hierarchical tensor approximations. In particular, this method provides a balance between error bounds and associated approximation ranks, using a scheme for iterative refinement with soft thresholding of tensors. The practical performance of the method is illustrated by numerical tests on coupled oscillators.
		
		\smallskip
		\noindent \emph{Keywords.}  low-rank approximation, hierarchical tensor format, time-dependent Schr\"odinger equations, quasi-optimal ranks
		\smallskip
		
		\noindent \emph{Mathematics Subject Classification.} Primary 65D40, 65F55, 65M12; Secondary 65Y20, 65L70
	\end{abstract}
	
	\section{Introduction}
	
	We build on the previous work \cite{BDS:25} on time integration of low-rank matrix decompositions and adapt the basic approach of soft-thresholded iterative refinement of low-rank approximations to the case of higher-order tensors.
	In this generalization, we use the hierarchical tensor format \cite{HackbuschKuehn:09,Hackbusch:12tensorspaces}, with tensor trains \cite{Oseledets:11} as a notable special case, and soft thresholding of such low-rank tensors as in \cite{BS17}.
	As in \cite{BDS:25}, in our analysis we focus on the case of time-dependent Schr\"odinger equations and show that the method has certain desirable properties for such problems. 
	However, the basic concepts can also be generalized to other problem classes. While the basic construction of the method follows \cite{BDS:25}, here we use a different strategy for steering the rank reduction parameters and for the analysis in order to arrive at a favorable dependence on the dimensionality of the problem.
	
	\subsection{Time integration methods for higher-order tensors}
	
	Solving Schr\"odinger-type evolution problems in high dimensions is a problem of central importance in physics and chemistry. 
	Low-rank approximations have become a standard tool for handling many classes of such problems. In the literature, there are various approaches to generating such approximations for time-dependent problems that differ particularly in how the approximation ranks are chosen.
	
	Dynamical low-rank approximation \cite{KL07,KochLubich2010,LubichRohwedderSchneiderVandereycken:13}, which in a physics context is known as multi-configuration time-dependent Hartree methods \cite{Meyer1990,WangThoss:03,ConteLubich:10}, is based on a restriction of the dynamics to manifolds of tensors of fixed ranks.
    Robustness with respect to small singular values can be achieved by time integration based on the splitting of tangent space projectors \cite{LubichOseledets:14,Lubich:15,LubichOseledetsVandereycken:15}.

	Basis update and Galerkin (BUG) integrators, introduced for the matrix case in \cite{CL22}, are a variant that can also be naturally combined with rank adaptation \cite{CKL22}; the generalization to tensor networks is given in \cite{CLS23}. Other methods with rank adaptation based on basis enrichment are considered in \cite{Yang:20,DektorRodgersVenturi:21,Dunnett:21,AC25}.
	
	A different type of rank adaptation is performed by step truncation methods \cite{DRV21,RV:23,LCK25}, which combine standard time stepping schemes without restriction to low-rank manifolds with rank truncation. When this truncation is done up to a specified error bound, the reference time stepping scheme can be approximated up to any specified error.
	Another alternative for achieving controlled errors are space-time variational formulations, which have been combined with low-rank tensors for parabolic \cite{Andreev:12,BF24} and for Schr\"odinger-type problems \cite{DEG24,DEG:25}.
	
	A limitation of most existing methods is that the balance between accuracy and tensor ranks of approximations is generally not ensured. On the one hand, whereas dynamical low-rank and related integrators ensure prescribed ranks, this may incur a modelling error; on the other hand, for methods based on time stepping, truncation up to sufficiently small tolerances will ensure convergence, but how the produced ranks relate to those required for approximating the solution for the given error is then not entirely clear. A first approach on choosing truncation tolerances for step truncation methods, in order to guarantee a relation to the respective optimal approximation ranks, is described in \cite{DSZ26} for general matrix-valued ordinary differential equations when considering explicit time-stepping schemes. However, as shown there, factors in the rank bounds that scale as the inverse time step can generally be unavoidable for such methods.
	
	Sharper comparisons to best approximation ranks are possible based on space-time formulations as in \cite{BF24,DEG:25}, based on a gradual refinement of approximations on the entire time interval.
	Since the existence of such formulations can be restrictive, in this work we consider a hybrid approach where approximations are refined on subintervals (the size of which need not tend to zero) based on fixed-point formulations, following the basic strategy developed in our earlier work \cite{BDS:25}. 
	
	\subsection{Balancing ranks and approximation errors}
	
	High-dimensional evolution problems suffer from the curse of dimensionality: a standard tensor-product discretization with $n$ degrees of freedom per variable leads to $n^d$ unknowns in dimension $d$. Hierarchical low-rank tensor formats can reduce this complexity, provided that the solution admits accurate approximations with moderate hierarchical ranks. In the present work, we aim for the construction of a rank-adaptive algorithm in the sense that the ranks of the computed approximations should be comparable to the best ranks required to achieve the same accuracy.
	
	In contrast to matrices, whose complexity is described by a single rank parameter, hierarchical tensors are characterized by a collection of ranks associated with the matricizations determined by the chosen dimension tree. We measure the complexity of such a tensor by the largest rank occurring among these matrix unfoldings. For a reference object $u$ and a prescribed accuracy $\eta>0$, the corresponding optimal approximation rank is the smallest possible value of this maximal rank among all approximations of $u$ with error at most $\eta$. Ideally, the ranks produced by an adaptive method should be bounded, up to fixed constants, by these optimal ranks at comparable accuracies. Such rank bounds, measured relative to best approximations of the exact solution, are available for linear operator equations; see, for example, \cite{bachmayr_adaptive_2015,BS17,Bachmayr23}.
	
	Our construction follows the general strategy introduced for low-rank matrix evolution problems in \cite{BDS:25}. The solution is advanced on successive time subintervals by solving fixed point iterations. During these iterations, the approximation is gradually refined, while ranks are controlled by appropriately adjusted truncations. For this purpose we use soft thresholding of hierarchical tensors as a main ingredient, since this keeps convergence properties of fixed point iterations intact. In the matrix case, soft thresholding acts on the singular values and has favourable stability properties that are compatible with contraction arguments. In the hierarchical tensor case, we apply soft thresholding successively to the matricizations associated with the nodes of the dimension tree.
	
	Unfortunately, it turns out that tensorial soft thresholding is less stable than its matrix counterpart, since the successive thresholding of different matricizations can accumulate perturbations along the dimension tree. A straightforward adaptation of the tolerance choices from \cite{BDS:25} would thus lead to large dimension-dependent factors in the error and rank estimates. We avoid the unfavorable factors by using a different adjustment of truncation thresholds and approximation tolerances.
	
	Concerning the choice of the reference object $u$, for a time-dependent problem, there is more than one natural reference for 
	rank comparisons. One may compare the computed ranks with the best approximation ranks of the exact solution at a given time. However, the fixed point problem solved on a subinterval is also determined by the numerical data inherited from previous subintervals, and hence by a perturbed initial value. Its solution may have different approximation ranks from the exact solution. We therefore analyze rank bounds with respect to both the exact solution and these perturbed local fixed points. This separates the intrinsic low-rank complexity of the evolution from the additional rank growth caused by error propagation.

	As a particularly relevant guiding example, we consider in this work linear, high-dimensional Schr\"odinger initial value problems with bounded potentials $V_t$ of the form
	\begin{equation}
		\label{eq:schroedinger}
		i\partial_t u = -\Delta u +  V_t u \qquad \text{for $t \in (0,T]$,}
	\end{equation}
	with initial datum $u(0) = u_0$ on the spatial domain $\Omega \subseteq\Rbb^d$.
	To bring this into a suitable fixed-point form, we use a formulation based on \textit{Duhamel's formula}
	\begin{equation}\label{eq:duhamel}
		u(t) = e^{it \Delta}u_0 -i \int_{0}^{t} e^{i(t-s)\Delta} V_s u(s) \, ds,
	\end{equation}
	where the right-hand side has the requisite Lipschitz continuity properties. Our basic approach can also be adapted to other problem classes, such as parabolic problems, based on suitable similar fixed-point formulations, see \cite[Sec.~5]{BDS:25}. Here, however, we also focus in particular on the favorable properties of our scheme, such as norm preservation, for the considered Schr\"odinger equations.
	
	\subsection{Outline and notation}  
	
	After presenting the setting of the problem and recalling some preliminary facts about hierarchical tensor approximation in Section \ref{sec:preliminaries}, we recapitulate the construction of the scheme and some basic accuracy bounds in Section \ref{sec:thresholdingscheme}.
	The convergence analysis is subdivided into two parts: in Section \ref{sec:ranksdiscr}, we analyze the approximation ranks produced by the method in relation to those of best approximations of the exact next steps in the time integration scheme. Although this is a natural reference quantity, it depends on the error history over several temporal subintervals. 
	In Section \ref{sec:comparison_wrt_exact}, we thus consider rank estimates in relation to the exact time evolution, which leads to some additional difficulties.
	We report on the results of first numerical tests on an interacting many-particle quantum system in Section \ref{sec:numexp}. We write $A \lesssim B$ to denote that there exists a constant $C>0$ such that $A\leq CB$, and $A\eqsim B$ for $A\lesssim B$ and $B\lesssim A$.

	\section{Preliminaries}\label{sec:preliminaries}
	
		\subsection{Schr\"odinger initial value problems}
	
	Consider a generic evolution problem of the form
	\begin{align}\label{eq:ivp}
		\begin{split}
			\partial_t u &= G_t u(t), \qquad t\in(0,T], \\
			u(0) &= u_0.
		\end{split}
	\end{align} 
	The functional setting is the complex Hilbert space $L^2(\Omega) = L^2(\Omega,\mathbb{C})$ on $\Omega = \Omega_1 \times\cdots\times \Omega_d \subseteq\Rbb^d$ with the inner product
	\begin{equation*}
		\inp{u}{v} = \int_\Omega \overline{u(x)} v(x)\,dx.
	\end{equation*}
	We also write $\norm{\cdot} = \norm{\cdot}_{L^2(\Omega)}$ and use the same notation for the corresponding operator norm.
	The operator $G_t:L^2(\Omega)\to L^2(\Omega)$ is linear and potentially time-dependent. We assume that its norm is bounded uniformly in time by $C_G>0$, that is,
	\begin{equation}\label{eq:unifbound}
	  \norm{G_t} \leq C_G \qquad \forall t\in[0,T].
	\end{equation}
    The integral formulation associated with \eqref{eq:ivp} then reads
	\begin{equation}\label{eq:int_form}
		u(t) = u_0 + \int_0^t G_s u(s)\, ds, \qquad t\in[0,T].
	\end{equation}
	
	The Duhamel formulation of the Schr\"odinger initial value problem \eqref{eq:duhamel} is not precisely of this form, but it is equivalent to the integral formulation \eqref{eq:int_form} for the \emph{twisted variable} $e^{-it\Delta}u(t)$ for the operator
	\begin{equation}\label{eq:twisted_f}
		G_t = -ie^{-it\Delta}V_te^{it\Delta}
	\end{equation}
	that we use in what follows.
	Note that this operator satisfies \eqref{eq:unifbound} with  \[  C_G = \sup_t \,\norm{V_t}_{L^2(\Omega)\to L^2(\Omega)}<\infty.  \] 
	The twisted variable formulation is thus a suitable  starting point for our analysis of the Schr\"odinger problem: we take $G_t$ in \eqref{eq:ivp} to be defined by \eqref{eq:twisted_f}, and we again denote the twisted variable by $u$. Observe that $\norm{e^{-it\Delta}u(t)}=\norm{u(t)}$ for all $t$, so that the change of variable does not affect norms.
	Moreover, as considered in more detail in Section \ref{sec:thresholdingscheme}, the action of $e^{it\Delta}$ for any $t \in \R$, on product domains with suitable boundary conditions, leaves the ranks in tensor decompositions unchanged. 
	In particular, one can easily recover the solution of the original problem \eqref{eq:schroedinger} from the twisted variable by inverting the transformation.
	
	Our approach is related to exponential integrators \cite{KT05}, in particular in a form originally suggested by \cite{Lawson67}. 
	However, standard exponential integrators (as adapted to fixed-rank manifolds in \cite{CV:23,SV:24}; see \cite{HO:10} for a general overview) generally rely on inverses of operators or Krylov space methods in the approximation of exponentials, whereas in our setting, it is more natural to decompose the arising exponentials directly as Kronecker products of lower-dimensional exponentials.
			
	\begin{remark}
	While we put some emphasis on norm-preserving properties of the methods considered here, this is not essential for their construction.
	Much of the basic analysis carried out here can also be generalized to right-hand sides induced by potentially nonlinear mappings that satisfy a Lipschitz condition uniformly in time. 
	\end{remark}
	
	Motivated by the global rank bounds derived in \cite{BDS:25}, which scale proportionally to $h^{-3}$, the aim is to construct a higher-order time stepping scheme and to then be able to use larger stepsizes. The idea is thus to use a superconvergent Gauss method, which can achieve up to double the order of convergence of the used intermediate stages. Since we are mainly considering Schrödinger equations, the natural choice seems to be the Gauss-Legendre method, since it preserves quadratic invariants such as the solution norm over time, see \cite{HairerLubichWanner:03} as well as \cite[Lemma 15]{BDS:25} for an adapted proof of isometry preservation.
	
	The resulting implicit Runge-Kutta method is solved using a Picard fixed-point iteration. This approach moreover allows us to maintain rank control on the iterates by using the soft-thresholding operator. Standard approaches using the integral formulation without going to twisted variables typically yield a step size restriction which depends strongly on the spatial discretization, see also \cite[Example 1]{BDS:25}. 
	
	\subsection{Fixed point formulation}\label{subs:fixed_point_form}
	Consider a generic time subinterval of the form $[t_0,t_1]$, with $t_1-t_0=h$.
	For $j = 1, 2, \dots, \nqp$, let $v_j\in L^2(\Omega)$ be approximations of the exact solution $u(\tau_j)$ at $\nqp$ time instants $t_0\leq\tau_1<\dots<\tau_\nqp\leq t_1$, where $\tau_j=t_0+c_jh$ for some $c_j\in[0,1]$. We define the vector $\mathbf{v}=(v_1,\dots,v_\nqp)\in L^2(\Omega)^\nqp$ and we equip the space $L^2(\Omega)^\nqp$ with the norm
	\begin{equation*}
		\normJ{\mathbf{v}} = \max_{j=1,\dots,\nqp}\norm{v_j}.
	\end{equation*}
	For later use, we also define the space $\cW(t_0,t_1):=L^\infty((t_0,t_1),L^2(\Omega))$, with the norm
	\begin{equation*}
		\normWg{u} = \sup_{t\in[t_0,t_1]}\norm{u(t)}.
	\end{equation*}
	
	Assume that an approximation $\initsol_0\in L^2(\Omega)$ of $u(t_0)$ is given at the initial time. We then define the map $\cfpmap_{\initsol_{0}}:L^2(\Omega)^\nqp\to\cW(t_0,t_1)$ as
	\begin{equation}\label{eq:continuous_fp_map}
		\cfpmap_{\initsol_0}(\mathbf{v})(t) := \initsol_0 + \int_{t_0}^t\sum_{q=1}^\nqp G_{\tau_q}(v_q)\ell_q(s)\,ds, \qquad t\in[t_0,t_1],
	\end{equation}
	where, for $q=1,2, \dots,\nqp$,
	\begin{equation*}
		\ell_q(s) = \prod_{p\neq q}\frac{s-\tau_p}{\tau_q-\tau_p} = \prod_{p\neq q} \frac{\frac{s-t_0}{h}-c_p}{c_q-c_p} =: \widehat{\ell}_q\left(\frac{s-t_0}{h}\right)
	\end{equation*}
	is the Lagrangian interpolation polynomial associated to $\tau_q$, and $\widehat{\ell}_q:[0,1]\to\Rbb$ is the Lagrangian polynomial associated to $c_q$. 
	We also define the map $\fpmap_{\initsol_0}:L^2(\Omega)^\nqp\to L^2(\Omega)^\nqp$ such that
	\begin{equation}\label{eq:fixed_point_map}
		\left(\fpmap_{\initsol_0}(\mathbf{v})\right)_j := \cfpmap_{\initsol_0}(\mathbf{v})(\tau_j), \qquad \text{for} \qquad j = 1, 2, \dots, \nqp.
	\end{equation}
	Observe that we can write
	\begin{equation*}
		\left(\fpmap_{\initsol_0}(\mathbf{v})\right)_j = \initsol_0+\sum_{q=1}^\nqp G_{\tau_q}(v_q)\omega_{j,q}, \qquad \text{where} \qquad \omega_{j,q}:=\int_{t_0}^{\tau_j}\ell_q(s)\,ds.
	\end{equation*}
	The weights $\omega_{j,q}$ do not depend on the endpoints of the particular time interval $[t_0,t_1]$, but only on the interval length, as it can easily be shown that
	\begin{equation*}
		\omega_{j,q} = h \int_0^{c_j}\widehat{\ell}_q(s)\,ds, \qquad j,q=1,2, \dots,\nqp.
	\end{equation*}
	It then holds that
	\begin{equation}\label{eq:normQ_vs_normW}
		\normJ{\fpmap_{\initsol_0}(\mathbf{v}) - \fpmap_{\initsol_0}(\mathbf{w})} \leq \normWg{\cfpmap_{\initsol_0}(\mathbf{v}) - \cfpmap_{\initsol_0}(\mathbf{w})} 
	\end{equation}
	for arbitrary $\mathbf{v},\mathbf{w}\in L^2(\Omega)^\nqp$. Moreover, 
	\begin{equation}\label{eq:lipsch}
		\normWg{\cfpmap_{\initsol_0}(\mathbf{v}) - \cfpmap_{\initsol_0}(\mathbf{w})} \leq C_G \Lambda_\nqp h\normJ{\mathbf{v} - \mathbf{w}},
	\end{equation}
	for all $\mathbf{v},\mathbf{w}\in L^2(\Omega)^\nqp$, where
	\begin{equation*}
		\Lambda_\nqp := \int_0^1\sum_{q=1}^\nqp\Bigabs{\widehat{\ell}_q(s)} \,ds,
	\end{equation*}
	see also \cite[Proposition 8]{BDS:25} for the derivation.
	Note that the constant $\Lambda_\nqp$ only depends on the number of quadrature nodes $\nqp$ and on the choice of the collocation points $c_j$.
	Setting
	\begin{equation}\label{eq:rhodef}
	\rho =C_G \Lambda_\nqp h,
	\end{equation}
	the map $\fpmap_{\initsol_0}$ is a contraction with respect to $\normJ{\cdot}$ for $\rho < 1$, and therefore it has a unique fixed point $\fixedp$ such that $\fixedp = \fpmap_{\initsol_0}(\fixedp)$. 
	
	Note that we also have the proportionality
	\begin{equation}\label{eq:resequiv}
  		(1 - \rho) \normJ{ \fixedp - \mathbf{w} }  \leq  \normJ{ \fpmap_{\initsol_0}(\mathbf{w}) - \mathbf{w} } \leq ( 1 + \rho) \normJ{ \fixedp - \mathbf{w} }
	\end{equation}	
	between the error and residual.
	
	Ideally, one would like to compute $\fixedp$ and then set the new approximation at time $t_1$ as
	\begin{equation}
		\cfpmap_{\initsol_0}(\fixedp)(t_1) = \initsol_{0} + \sum_{q=1}^Q G_{\tau_q}(\widehat{v}_q)\omega_q, \qquad \text{where}\qquad \omega_q=\int_{t_0}^{t_1}\ell_q(s)\,ds. \label{eq: next_int_eval}
	\end{equation}
	It is well known that this procedure is equivalent to a collocation method: if $\fixedp=\fpmap_{\initsol_0}(\fixedp)$, then $\cfpmap_{\initsol_0}(\fixedp)$ is a polynomial of degree $\nqp$ in time satisfying $\partial_t\cfpmap_{\initsol_0}(\fixedp)(\tau_j) = G_{\tau_j} \cfpmap_{\initsol_0}(\fixedp)(\tau_j)$ for all $j = 1, 2 , \dots, \nqp$. In turn, this is equivalent to an implicit Runge-Kutta method \cite[Theorem 7.7]{hairer_solving_1993}. In particular, if the points $c_j$ are chosen to be the Gauss-Legendre quadrature points in $[0,1]$, then this procedure is equivalent to a $\nqp$-stage Gauss-Legendre method. Alternatively, if $c_j$ are the Gauss-Lobatto quadrature points in $[0,1]$, then we have a $\nqp$-stage Lobatto IIIA method. Note that in case of the Gauss-Lobatto quadrature we have $v_1 = \initsol_0$ as well as $v_{\nqp} = \cfpmap_{\initsol_0}(\fixedp)(t_1)$.

	\subsection{Low-rank hierarchical tensor approximations}\label{sec:lowranktensors}
	
	We consider low-rank representations for elements of a Hilbert tensor product space $\cH = \cH_1 \otimes\cdots \otimes\cH_d$ where each $\cH_i$ is a separable Hilbert space. The product space is endowed with the cross norm $\norm{\cdot}$ such that
	$\norm{ v_1 \otimes \cdots \otimes v_d } = \prod_{i = 1}^d \norm{v_i}_{\cH_{i}}$
	for all $v_1 \in \cH_1, \ldots, v_d \in \cH_d$. In the case of \eqref{eq:schroedinger}, we will be mainly interested in the case of $\mathcal{H}_i = L^2(\R)$ for $i = 1,2, \ldots, d$, corresponding to $\cH = L^2(\R^d)$.
	
	The hierarchical tensor (HT) formats that we consider here are based on \emph{matricizations} of a given tensor $v \in \cH$.
	Such a matricization is defined for each non-empty $s \subset \{1,2,\ldots, d\}$, where we set $s^{\mathsf c} = \{1, 2, \ldots,d\}\setminus s$, by interpreting $v$ canonically as a linear operator from 
	$\bigotimes_{i \in s} \cH_i$ to $\bigotimes_{i \in s^{\mathsf c}} \cH_i$, for which we can consider low-rank approximations based on its singular value decomposition.
	
	Subspace-based tensor formats are associated with distinct unordered pairs $\{ s_\mu, s^{\mathsf c}_\mu \}$, $\mu = 1, 2, \dots , E$, with non-empty $s_\mu, s_\mu^{\mathsf c} \subset \{1, 2, \ldots,d\}$ for some $E \in \N$, with corresponding matricizations
	\[
	\cM_\mu (v) \colon \bigotimes_{i \in s_\mu} \cH_i \to \bigotimes_{i \in s_\mu^{\mathsf c}} \cH_i, \quad \mu = 1, 2, \ldots, E.
	\]
	Here, $\cM_\mu(v)$ is called the \emph{$\mu$-matricization} of the tensor $v$. Since the pairs $\{s,s^{\mathsf c}\}$ and $\{s^{\mathsf c},s\}$ yield equivalent matricizations up to transposition, only one representative of each unordered pair is retained. Correspondingly, we denote by $\cM_\mu^{-1}$ the mapping that converts this matricization back to a tensor. The sequence of singular values of $\cM_\mu(v)$ is denoted by $\sigma_\mu(v) = (\sigma_{\mu,k}(v))_{k \in \N}$, and the hierarchical $\mu$-rank is 
	\[
	\operatorname{rank}_\mu(v) := \operatorname{rank}(\cM_\mu(v)) = \#\{k\in\N: \sigma_{\mu,k}(v) \neq0\}\;\in\;\N_0\cup\{\infty\}.
	\]
	
	The hierarchy of subsets of $\{1, 2,\dots,d\}$ can be described by a dimension tree, the nodes of which are subsets of $\{1, 2, \dots,d\}$. In this work, we restrict our attention to \emph{binary dimension trees}; detailed definitions and descriptions can be found in \cite{BS17,Bachmayr23}. 	
	For such a binary dimension tree $\T$, the number of associated nontrivial matricizations is $E=2d-3$.

	This yields a representation in terms of bases $\{U_k^{(i)}\colon k = 1, 2, \ldots, r_i \}$, $i = 1, 2, \dots, d$, in the leaves $\{1\},\ldots,\{d\}$ of $\T$, called \emph{mode frames}, and coefficient tensors $b^{(s)} \in \C^{r_s \times r_{s_1}\times r_{s_2}}$ (also called \emph{transfer tensors} or \emph{cores}) that are given for inner nodes $s \in \T$ of the tree with $\#s>1$ and $s = s_1 \dot\cup s_2$. Starting from the leaves, these coefficient tensors recursively define bases $\{U_k^{(s)} \colon k = 1,2,\ldots,r_s\}$ by
	$b^{(s)}_{k_0, k_1, k_2} = \langle U_{k_0}^{(s)}, U_{k_1}^{(s_1)}\otimes U_{k_2}^{(s_2)}\rangle$.
	
The number of stored entries required to represent a tensor $v$ of size $n_1\times\cdots\times n_d$ with components of these sizes is reduced to
	\[
	\sum_{i=1}^d r_i \, n_i + \sum_{\substack{s = s_1\dot\cup s_2\\ s_1, s_2 \in\T}} r_s r_{s_1} r_{s_2} .
	\]
	Since tensors of fixed rank are generally not closed under addition, hierarchical ranks may increase rapidly in numerical algorithms. Therefore, rank truncation is an essential operation for reducing such representations from larger to smaller ranks. 
	An essential property of hierarchical tensors is that a rank truncation with similar properties as the SVD truncation of matrices can be achieved by truncation of \emph{hierarchical SVD} (HSVD) representations. We say that a tensor is in HSVD form if for all $s \in \T\setminus \{1,2, \dots,d\}$, the associated basis vectors $\{ U_k^{(s)} \}$ are singular vectors of the corresponding matricizations.
		
	Unlike the truncated SVD in the matrix case, HSVD truncation does not in general provide a best approximation in the set of tensors with prescribed hierarchical rank. However, we have the following error bound and quasi-optimality property: let $\mathcal T_{k}(v)$ denote the truncation of an HSVD representation of $v$ to target ranks $k=(k_\mu)_{\mu=1}^E$,
	then
	\[
	\begin{aligned}
	\norm{v-\mathcal T_{k}(v)}
	&\le
	\biggl(
	\sum_{\mu=1}^E
	\sum_{j>k_\mu}
	\sigma_{\mu,j}(v)^2
	\biggr)^{\frac12}  \\
	&\le \sqrt{E} \min \bigl\{ \norm{ v - w} \colon w \in \cH, \; \rank_\mu(w) \leq k_\mu, \ \mu = 1, 2, \ldots, E \bigr\}.
	\end{aligned}
	\]

	The first of these two inequalities also provides a HSVD-based rank truncation up to a prescribed error $\delta>0$. To this end, we choose a vector of target ranks $k_\delta(v)=(k_{\delta,\mu}(v))_{\mu=1}^E$ such that $\max_\mu k_{\delta,\mu}$ is minimal while 
	\[ 
 	\sum_{\mu=1}^E \sum_{j>k_\mu}\sigma_{\mu,j}(v)^2 \leq \delta^2 \,.
	\]
	The corresponding recompression operator defined by
	\[
	  \recomp_\delta(v)=\mathcal T_{k_\delta(v)}(v)
	\] 
	then satisfies 
	$\norm{v-\recomp_\delta(v)}
    	 \leq
    	  \delta$; for further details, we refer to \cite{Grasedyck2010,Hackbusch:12tensorspaces,Bachmayr23}.
	  
    Soft thresholding for hierarchical tensors \cite{BS17} is an alternative technique for rank reduction that offers advantages in preserving the convergence properties of fixed-point iterations. Let $\cS_{\mu, \alpha}:\cH\rightarrow\cH$ denote the soft thresholding operation applied to the matricization $\cM_\mu(\cdot)$ of the input, that is,
	\[
	\cS_{\mu, \alpha}(v) = \bigl(\cM_\mu^{-1}\circ S_\alpha \circ \cM_\mu\bigr)(v),
	\]
	where $S_\alpha (X) := U\operatorname{diag}\bigl(\max\{\sigma_i(X)-\alpha, 0\}\bigr)V^*$	for a matrix $X = U\operatorname{diag}\sigma(X) V^*$. Then the hierarchical tensor soft thresholding operator $\mathbf \cS_\alpha: \cH\rightarrow\cH$ is defined as the composition
	\begin{equation}
		\label{eq:soft_thresh}
		 \cS_\alpha (v) = \bigl(\cS_{E,\alpha}\circ \cdots\circ \cS_{1,\alpha} \bigr) (v).
	\end{equation}
	As each matricization-wise soft thresholding map is non-expansive, the full composition $\cS_\alpha$ is non-expansive as well: for all $v,w\in \cH$, as shown in \cite[Prop.~3.2]{BS17} we have
	\begin{equation}
	\label{eq:soft_thresh_nonexp}
	    \norm{ \cS_\alpha(v) - \cS_\alpha(w) } \leq \norm{ v - w }.
	\end{equation}
	
	\section{Iterative thresholding time integration scheme}\label{sec:thresholdingscheme}
	
	For implementing time integration in low-rank format, on each subinterval we approximate the corresponding $\fixedp$ by fixed point iteration. Since for such an iteration in low-rank format, the ranks of iterates increase with every step, the basic iteration needs to be combined with a suitable mechanism for controlling the hierarchical ranks of the intermediate quantities. To this end, we use soft thresholding of hierarchical tensors as given in \eqref{eq:soft_thresh}. 
	
	\subsection{Basic construction of the method}
	
	The basic scheme for time integration follows the one analyzed for $d=2$ in \cite{BDS:25}. The time interval $[0,T]$ under consideration is subdivided into subintervals of length $h = T/N$ with $N \in \N$ such that $h$ is sufficiently small to ensure that for all $v \in L^2(\Omega)$, the mapping $\mathcal F_v$ defined in \eqref{eq:fixed_point_map} is a contraction. On each subinterval, given an initial value at the left interval boundary, we use Picard iteration with soft thresholding of hierarchical tensors to obtain an approximation of the value at the right boundary. The iteration is carried out on vectors $\mathbf{v} \in L^2(\Omega)^\nqp$ of approximations at the $\nqp$ intermediate times in the subinterval, and we define the appropriate soft thresholding operation by $\softt_{\alpha}\mathbf{v}\in L^2(\Omega)^\nqp$ with $(\softt_\alpha\mathbf{v})_j := \softt_\alpha v_j$ for $j=1,2,\dots, \nqp$. Since the transition to the starting value at the next subinterval is done using \eqref{eq: next_int_eval}, we combine this with an additional rank truncation step in order to take into account the rank increase from the summation.
	
	\subsubsection{Local iteration for each subinterval} \label{sec: local iter}
	We again assume a generic subinterval $[t_0,t_1]$ with $t_1 - t_0 = h$ with an initial value $\initsol_0 \in L^2(\Omega)$. We write $\fpmap=\fpmap_{\initsol_0}$ for notational simplicity, and we omit parentheses when applying $\fpmap$ or $\softt_{\alpha}$. We begin with a general description of the method, which involves several parameters. Their relation will be determined by the analysis of the scheme.
	
	The iterative scheme on each subinterval uses an outer loop with index $i$ and an inner loop with index $j$, with $i,j\geq 0$ and associated computed approximations $\comp_{i,j}$ at the start of the respective iteration. With the starting value $\comp_{0,0} \in L^2(\Omega)$ and suitably chosen $\alpha_0 >0$, we define the iteration by 
	\begin{equation}
		\comp_{i,j+1} = \softt_{\alpha_i}\fpmap\comp_{i,j}, \label{eq: iter def}
	\end{equation}
	for each $i \geq 0$, where $\alpha_i = \theta^i \alpha_0$ for a fixed $\theta \in (0,1)$. For a given $\nu\in(0,1)$, we decrease the threshold $\alpha_i$ adaptively by increasing the index $i$ when the condition  
	\begin{equation}\label{eq:thresholdcondition}
		\normJ{\comp_{i,j}-\comp_{i,j-1}}\leq\nu\frac{1-\rho}{\rho(1+\rho)}\normJ{\fpmap\comp_{i,j}-\comp_{i,j}}
	\end{equation}
	is satisfied.
	We denote by $J_i$ the smallest value of $j \geq 0$ for which this condition holds and set $\comp_{i+1,0} := \comp_{i,J_i}$.
	To start the algorithm, we initialize $\comp_{0,0}=0$, and choose $\alpha_0$ such that $\comp_{0,1}=\softt_{\alpha_0}\fpmap\comp_{0,0}=0$. While this is not the only possible choice, it is convenient for obtaining rank bounds, as we will show later.
	
	As the soft thresholding operator $\softt_{\alpha}$ is non-expansive for any value of the threshold $\alpha$, the combined map $\softt_\alpha\fpmap$ is still a contraction. Therefore, for each $i$, there is a unique fixed point denoted by $\modfp{\alpha_i}$ such that $\modfp{\alpha_i} = \softt_{\alpha_i}\fpmap\modfp{\alpha_i}$. Note that the strategy by which the threshold $\alpha$ is reduced differs from the one analyzed in \cite{BDS:25}: here, we reduce the threshold only when the algorithm detects convergence to the modified fixed-point $\modfp{\alpha_i}$ according to \eqref{eq:thresholdcondition}, whereas in \cite{BDS:25} the threshold is decreased in each iteration.
	
	For a prescribed tolerance $\varepsilon>0$, the algorithm stops when 
	\begin{equation*}
		\normJ{\comp_{i,j}-\fpmap\comp_{i,j}} < \varepsilon.
	\end{equation*} 
	We denote by $I$ the smallest value of $i \geq 0$ for which this condition is satisfied, and by $J_I$ the corresponding value of $j \geq 1$. The approximation at the subinterval boundary is then obtained from
	\begin{equation*}
		\initsol_1 = \recomp_{\delta} \bigl( \cfpmap_{\initsol_0} \comp_{I,J_I}(t_1) \bigr)
	\end{equation*}
	for a suitable rank truncation tolerance $\delta >0$. Note that the non-expansiveness property of the soft thresholding operator together with $\rho < 1$ guarantee the existence of such a finite index $I$. The existence of finite indices $J_i$ can be obtained from the proof of \cite[Theorem 5.1]{BS17}.
	
	\subsubsection{Low-rank realization of the fixed point mapping}
	
	For $t \in \R$ and $w\in\cH$,
	\[
	e^{it \Delta} (w_1 \otimes \cdots\otimes w_d) = (e^{it \partial^2_1} w_1) \otimes \cdots \otimes  (e^{it \partial^2_d} w_d),
	\]
	and as a consequence, $\rank_\mu(e^{it \Delta}w) = \rank_\mu(w)$ for all $\mu = 1, 2, \dots, E$. 
	Thus, the action of $e^{it\Delta}$ does not increase the hierarchical ranks. In a discrete tensor product basis, this operation amounts to applying the corresponding one-dimensional matrix exponential to each mode frame of the hierarchical tensor representation of $w$.
	
	It remains to describe the action of the potential term. We assume that, at all times, the operator $V_t$ admits a hierarchical representation with ranks bounded by $\rank(V_t) \in \N_0$ in terms of bounded linear operators on $\cH_j$, $j=1,\ldots,d$, in the leaves of the representation, so that for $w \in \cH$ with finite hierarchical ranks, we have $\rank_\mu(V_t w) \leq \rank(V_t) \rank_\mu(w)$ for $\mu = 1,\ldots,E$.
	
	Combining this with the mode-wise action of $e^{it\Delta}$, the operators $G_{\tau_q}, q=1, 2, \dots, Q$ appearing in the fixed point mapping $\fpmap_{\initsol_0}$ can be evaluated entirely in the HT format without assembling the full tensor. Since the potential term may increase the hierarchical ranks by a factor $\rank(V)$, the resulting tensors are recompressed after tensor additions in the fixed point iteration to maintain an efficient representation.
	
	\subsubsection{Global time integration} \label{sec:globaliter}
	
	In the following, we extend the notations of Section \ref{sec: local iter} to the global context. The complete algorithm is given in Algorithm~\ref{alg:basic}.
	
	The approximation $\initsol_0$ for the initial data is chosen such that $\normJ{\initsol_0-u(0)}\leq\eta_0$ for a prescribed initial accuracy $\eta_0$. With $t_n = nh$ and the constant step size $h = T/N$, we start by defining $\timeint_n=(t_{n-1}, t_n)$ as the $n$th time subinterval, for $n=1,2,\dots,N$. The collocation points in $\timeint_n$ are then $\tau_{j,n}:=t_{n-1} + c_jh$, for $j=1,2,\dots,\nqp$. We assume that an approximation $\initsol_{n-1}$ of the exact solution $u(t_{n-1})$ is given at $t_{n-1}$. The fixed point map \eqref{eq:fixed_point_map} in $\timeint_n$ is $\fpmap_{\initsol_{n-1}}=:\fpmap_n$, and we denote its unique fixed point as $\fixedpn{n}$, so that $\fixedpn{n}=\fpmap_n\fixedpn{n}$. We also use the same shorthand notation $\cfpmap_n:=\cfpmap_{\initsol_{n-1}}$ for the map \eqref{eq:continuous_fp_map}. The evaluations of the exact solution $u$ at the collocation points $\tau_{j,n}$ are collected in the vector
	\begin{equation*}
		\exsoln{n}=\bigl(u(\tau_{1,n}),\dots,u(\tau_{\nqp,n}) \bigr)\in L^2(\Omega)^\nqp.
	\end{equation*}\
	
	Here and in what follows, to avoid technicalities, we assume that $\fpmap_n$ is evaluated exactly. However, following the lines of \cite[Sec.~5.1]{BS17}, the scheme and its analysis could be adapted to inexact evaluation of the mapping with appropriately chosen relative residual tolerances.
	
	The fixed point iteration on $\mathcal I_n$ is then defined by $\compn{n}_{i,j+1}=\softt_{\alpha_{i,n}}\fpmap_n\compn{n}_{i,j}$. In each subinterval we initialize $\compn{n}_{0,0}=0$ and we choose $\alpha_{0,n}$ such that $\compn{n}_{0,1} = \softt_{\alpha_{0,n}}\fpmap_n\compn{n}_{0,0}=0$. For a prescribed $\nu\in(0,1)$, we decrease the threshold according to \eqref{eq:thresholdcondition}. Note that in this condition, in principle we could also let $\nu$ vary and take $\nu=\nu_n$.
	
	We then set $\alpha_{i+1,n}=\theta\alpha_{i,n}$ for a prescribed $\theta\in(0,1)$, and define $\compn{n}_{i+1,0} := \compn{n}_{i,J_i}$. The algorithm terminates on the respective subinterval once
	\begin{equation}\label{eq:stoppingn}
		\normJ{\compn{n}_{i,j}-\fpmap_n\compn{n}_{i,j}} < \varepsilon_n
	\end{equation}
	holds for a prescribed threshold $\varepsilon_n>0$. We denote by $I_n$ the smallest value of $i \geq 0$ for which this condition is satisfied, and by $J_{I_n}$ the corresponding value of $j \geq 1$, so that $\compn{n}_{I_n,J_{I_n}}$ is the last computed quantity. 
	The starting point for the next time subinterval is then given by
	\begin{equation}\label{eq:nextinitsol}
		\initsol_{n}:=\recomp_{\delta_n} \cfpmap_n\compn{n}_{I_n,J_{I_n}}(t_n), 
	\end{equation}
	when Gauss--Legendre quadrature is used.  For Gauss--Lobatto points, the final collocation point coincides with $t_n$ and thus, in principle, no additional evaluation of the fixed-point map $\Phi_n$ is needed. Instead, one can simply apply a final recompression, that is, replace \eqref{eq:nextinitsol} in line \ref{line:recompr} of Algorithm~\ref{alg:basic} by 
	\begin{equation}\label{eq:nextinitsol_lobatto}
	  \initsol_{n}:=\recomp_{\delta_n} \compn{n}_{I_n,J_{I_n}}(t_n).
	  \end{equation}

	\begin{algorithm}
		\caption{Time integrator by thresholded fixed point iteration}\label{alg:basic}
		\begin{algorithmic}[1]
			\Require Initial value $\initsol_0$, final time $T$, number of time steps $N$, fixed point tolerances $\{\varepsilon_n\}_{n=1}^N$, recompression tolerances $\{\delta_n\}_{n=1}^N$, threshold decaying factor $\theta\in(0,1)$, $\nu\in(0,1)$, dimension $d$.
			\For{$n=1,2,\dots,N$}\Comment{time stepping} 
			\State Determine the collocation points $\tau_{1,n},\dots,\tau_{Q,n}$
			\State Initialize $i\leftarrow0$, $\compn{n}_{0,0}=0$, $\alpha_{0,n} = \frac{\| \tilde u_{n-1}\| }{2d-3}$, $\mathrm{res}\leftarrow \norm{\fpmap_n(0)}_Q$
			\While{$\mathrm{res} > \varepsilon_n$} \Comment{outer loop}
			\State Initialize $j\leftarrow0$
			\State Evaluate $\fpmap_n\compn{n}_{i,0}$ as in \eqref{eq:fixed_point_map}
			\Repeat \Comment{inner loop}
			\State Apply soft thresholding $\compn{n}_{i, j+1} = \softt_{\alpha_{i,n}}\fpmap_n\compn{n}_{i,j}$
			\State Update $j = j+1$
			\State Evaluate $\fpmap_n\compn{n}_{i,j}$
			\Until{$\normJ{\compn{n}_{i,j}-\compn{n}_{i,j-1}}\le\nu\frac{1-\rho}{\rho(1+\rho)}\normJ{\fpmap_n\compn{n}_{i,j}-\compn{n}_{i,j}}$}
			\State Set $J_i\leftarrow j$
			\State Update $\mathrm{res}\leftarrow\norm{\fpmap_n\compn{n}_{i,J_i} - \compn{n}_{i,J_i}}_Q$
			\State Set $\compn{n}_{i+1,0} = \compn{n}_{i, J_i}$
			\State Update $\alpha_{i+1,n}\leftarrow\theta\alpha_{i,n}$ \Comment{decrease $\alpha$ once inner loop finished}
			\State Update $i\leftarrow i+1$
			\EndWhile
			\State Set $I_n \leftarrow i-1$
			\State $\initsol_{n} = \recomp_{\delta_n} \cfpmap_n\compn{n}_{I_n,J_{I_n}}(t_n)$ \Comment{evaluate starting value for next subinterval} \label{line:recompr}
			\EndFor
			\State \Return $\initsol_1$, \dots, $\initsol_N$
		\end{algorithmic}
	\end{algorithm}
	
	In the remainder of the paper, we restrict our analysis mainly to the case of Gauss-Legendre nodes and only comment on the differences to Gauss-Lobatto nodes as an alternative choice.
	
	\subsection{Accuracy bounds}\label{sec:accuracy_bounds}
	As described in \Cref{subs:fixed_point_form}, the fixed point of the contraction $\fpmap_{\initsol_0}$ introduced in \eqref{eq:fixed_point_map} corresponds to the solution of an implicit Runge-Kutta method. In this section, we further exploit this equivalence property to prove some useful local accuracy bounds as in \cite{BDS:25}. We remark that these results only apply to the fixed point solution and not to the intermediate quantities produced by the fixed point method.
	
	We restrict our analysis to the case where $G_t$ is a skew-Hermitian operator at all times, in the sense that $\inp{G_t u}{v} = -\inp{u}{G_t v}$
	for all $u,v\in L^2(\Omega)$ and times $t \geq 0$. Skew-Hermitian operators preserve the norm of the exact solution of \eqref{eq:ivp}: namely, it holds that
	\begin{equation*}
		\frac{d\norm{u(t)}^2}{dt}=\frac{d}{dt}\left<u(t),u(t)\right> = \left<G_t u(t), u(t)\right> + \left<u(t), G_t u(t) \right> = 0.
	\end{equation*}
	In our case of interest, the Schr\"odinger equation \eqref{eq:schroedinger} as well as the formulation \eqref{eq:twisted_f} satisfy this requirement whenever the Hamiltonian $H_t = -\Delta + V_t$ is Hermitian.
	
	We first show that, in the case of norm-preserving operators, some choices of the collocation points can preserve the norm of the numerical solution.
	
	\begin{lemma}[Isometry preservation]\label[lemma]{lem:iso_pres}
		Let $v_0,w_0\in L^2(\Omega)$, and $\fixedp=\fpmap_{v_0}(\fixedp)$, $\widehat{\mathbf{w}} = \fpmap_{w_0}(\widehat{\mathbf{w}})$. Set $v_1:=\cfpmap_{v_0}(\fixedp)(t_1)$ and $w_1:=\cfpmap_{w_0}(\widehat{\mathbf{w}})(t_1)$. If Gauss-Legendre nodes are used, then
		\begin{equation*}
			\norm{v_1-w_1} = \norm{v_0-w_0}.
		\end{equation*}
		If Gauss-Lobatto nodes are used, the same property holds provided $G_t=G$ does not explicitly depend on time.
	\end{lemma}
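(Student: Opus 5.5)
By linearity, everything reduces to a single collocation solution. The maps $\fpmap_{v_0}$ and $\cfpmap_{v_0}$ are affine in $(v_0,\mathbf{v})$. Set $e_0 := v_0 - w_0$ and $\mathbf{e} := \fixedp - \widehat{\mathbf{w}}$. Subtracting the two fixed-point equations gives $\mathbf{e} = \fpmap_{e_0}(\mathbf{e})$, and likewise $v_1 - w_1 = \cfpmap_{e_0}(\mathbf{e})(t_1)$. It therefore suffices to prove $\norm{\cfpmap_{e_0}(\mathbf{e})(t_1)} = \norm{e_0}$ for the collocation solution started at $e_0$.

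Let $p(t) := \cfpmap_{e_0}(\mathbf{e})(t)$. By the collocation property recalled in Section \ref{subs:fixed_point_form}, $p$ is a polynomial of degree $\nqp$ in $t$ with values in $L^2(\Omega)$. It satisfies $p(t_0) = e_0$ and $p'(\tau_j) = G_{\tau_j} p(\tau_j)$, with $p(\tau_j) = e_j$. Now write
\[
\norm{p(t_1)}^2 - \norm{p(t_0)}^2 = \int_{t_0}^{t_1} 2\Re \inp{p(s)}{p'(s)}\,ds .
\]
The integrand is a real polynomial of degree $2\nqp - 1$ in $s$. Gauss–Legendre quadrature with $\nqp$ nodes is exact for this degree, so the integral equals $h\sum_j b_j\, 2\Re\inp{p(\tau_j)}{p'(\tau_j)} = h\sum_j b_j\, 2\Re\inp{e_j}{G_{\tau_j} e_j}$. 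Each term vanishes because $G_{\tau_j}$ is skew-Hermitian, so $\inp{e_j}{G_{\tau_j}e_j}$ is purely imaginary. This gives the Gauss–Legendre case. The only point that needs care is that $\Re\inp{p(s)}{p'(s)}$ really is a scalar polynomial of the stated degree. To see this, expand $p$ in the monomial basis with coefficients in $L^2(\Omega)$ and use sesquilinearity.

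For Gauss–Lobatto nodes the quadrature is exact only up to degree $2\nqp - 3$, so the argument above fails. Here I would use the assumption $G_t = G$ and view the method as an implicit Runge–Kutta scheme applied to the linear autonomous system $u' = Gu$. Lobatto IIIA on a linear constant-coefficient problem gives $p(t_1) = R(hG)e_0$, where $R$ is the stability function. For Lobatto IIIA this is the diagonal Padé approximant of the exponential (degree $(\nqp-1,\nqp-1)$), so $R(z) = P(z)/P(-z)$ with a polynomial $P$ having real coefficients. Because $G$ is skew-Hermitian and bounded, $G$ is normal with spectrum on the imaginary axis, and $P(-hG) = P(hG)^*$. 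Hence $R(hG) = P(-hG)^{-1}P(hG)$ is a Cayley-type unitary operator: $P(hG)$ and $P(hG)^*$ commute, and they have equal norms on every vector. The inverse exists for $\rho < 1$ by the contraction property, or alternatively because $P$ has no zeros on $i\R$.

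The main obstacle is to justify the identification of the Lobatto stability function with the diagonal Padé approximant. I would cite Hairer–Wanner for this. I would also check that the collocation fixed point equals the Runge–Kutta step, which holds by uniqueness of the fixed point. With these in hand, $\norm{v_1 - w_1} = \norm{R(hG)e_0} = \norm{e_0}$.
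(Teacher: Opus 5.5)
Your proposal is correct and follows essentially the same route as the paper: the Gauss--Legendre case uses the same reduction by linearity to one collocation polynomial $p$, with $\norm{p(t_1)}^2-\norm{p(t_0)}^2=\int_{t_0}^{t_1} 2\Re\inp{p}{p'}\,ds$ integrated exactly by $\nqp$-point Gauss quadrature, and the Lobatto case uses the same stability-function argument via functional calculus for the skew-Hermitian $G$. The only minor difference is that you get $\abs{R(iy)}=1$ from the diagonal Pad\'e form $R(z)=P(z)/P(-z)$ with real coefficients, whereas the paper uses the symmetry relation $R(z)R(-z)=1$ of Lobatto IIIA, which together with the real coefficients of $R$ gives the same conclusion.
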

	
	\begin{proof}
		The first part of the proof is analogous to \cite[Lemma 15]{BDS:25}, and we report it here to show that it also applies to a generic skew-Hermitian operator $G_t$. Observe first that, if $G_t$ is skew-Hermitian, then $\Re\left\langle G_t(u),u\right\rangle=0$ for all $u\in L^2(\Omega)$ at all times.
		
		Let $p:=\cfpmap_{v_0}(\fixedp)-\cfpmap_{w_0}(\widehat{\mathbf{w}})$. By linearity, we know that $p$ is a polynomial of degree $\nqp$ satisfying $\partial_tp(\tau_j) = G_{\tau_j}(p(\tau_j))$ for all $j=1, 2, \dots,\nqp$. It then holds that
		\begin{align*}
			\norm{v_1-w_1}^2-\norm{v_0-w_0}^2 &= \norm{p(t_1)}^2-\norm{p(t_0)}^2 =  \int_{t_0}^{t_1}2\Re\inp{\partial_tp(t)}{p(t)}\,dt \\&=2\Re\sum_{j=1}^\nqp\inp{\partial_tp(\tau_j)}{p(\tau_j)}\omega_j = 2\sum_{j=1}^\nqp\Re\left(\inp{G_{\tau_j}p(\tau_j)}{p(\tau_j)}\right)\omega_j = 0,
		\end{align*}
		where $\omega_j$, $j=1,\ldots, Q$, as in \eqref{eq: next_int_eval}.
		In the fourth equality we used the fact that $s\mapsto\inp{\partial_tp(s)}{p(s)}$ is a polynomial of degree $2\nqp-1$, and it can thus be integrated exactly by a Gauss-Legendre quadrature formula with $\nqp$ nodes.
		
		The degree of accuracy of Gauss-Lobatto quadrature formulas with $\nqp$ nodes is only $2\nqp-3$, and we can not apply the same argument. Instead, we assume that $G_t = G$  with time-independent $G$, and we observe that, since $G$ is bounded and skew-Hermitian, its spectrum $\sigma(G)$ is purely imaginary and there exists a projection-valued measure $\pi^G$ on $\R$ associated to the self-adjoint operator $-iG$ such that $G = \int_{-i\sigma(G)} i\lambda \,d\pi^G(\lambda)$. When applied to \eqref{eq:ivp} with initial condition $u(t_0)=u_0$, a generic Runge-Kutta method produces an approximation $u_1$ at time $t_0+h$ given by $u_1=  \int_{-i\sigma(G)} R(i\lambda h) \,d\pi^G(\lambda)  u_0$.
		
		Since Lobatto IIIA is a symmetric Runge-Kutta method, its stability function $R:\mathbb{C}\to\mathbb{C}$ satisfies $R(z)R(-z)=1$ for all $z\in\mathbb{C}$. In particular, this implies that $\lvert R(\lambda h)\rvert=1$ for $\lambda\in\sigma(G)$, since $\sigma(G)$ is purely imaginary. Then, since we have
		\begin{equation*}
			v_1-w_1= \int_{-i\sigma(G)} R(i\lambda h) \,d\pi^G(\lambda) (v_0-w_0)
		\end{equation*}
		by linearity and the isometry property of the spectral functional calculus, we obtain $\norm{v_1-w_1}^2 = \norm{v_0-w_0}^2$.
	\end{proof}
	
	Note that in the twisted variable formulation \eqref{eq:twisted_f}, the right-hand side $f_t$ becomes time-dependent even when starting from a time-independent Hamiltonian. Therefore, in this setting, we do not have exact isometry preservation when using Lobatto collocation points.

    For the local error analysis, we assume that the collocation solutions have sufficient temporal regularity. More precisely, for the $\nqp$-stage methods considered below, $(\partial_t-G_t)^m v(t)$ with $0\leq m\leq 2\nqp+1$
    are assumed to be uniformly bounded in $L^2(\Omega)$ for the exact and collocation solutions appearing in the analysis.
	\begin{lemma}[Local error at subinterval boundary]\label[lemma]{lem:local_bound}
		Let $\widehat{\mathbf{w}}=\fpmap_{u(t_0)}(\widehat{\mathbf{w}})$ be the collocation solution with Gauss-Legendre points associated to the exact initial value $u(t_0)$. Then
		\begin{equation*}
			\norm{\cfpmap_{u(t_0)}(\widehat{\mathbf{w}})(t_1) - u(t_1)} \leq \kappa_{2\nqp}h^{2\nqp+1},  
		\end{equation*}
 where the constant $\kappa_{2\nqp}$ is given by
		\begin{equation*}
			\kappa_{2\nqp} =  \frac{(Q!)^4}{(2Q+1)![(2Q)!]^2} \sup_{s\in[t_0,t_1]}\norm{(\partial_s-G_s)^{2\nqp + 1}\cfpmap_{u(t_0)}(\widehat{\mathbf{w}})(s)}.
		\end{equation*}
	\end{lemma}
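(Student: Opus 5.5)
The plan is to follow the classical argument for superconvergence of collocation methods, in the form given by Hairer--Lubich--Wanner, adapted to the linear, time-dependent, bounded operator $G_t$. Write $w(t):=\cfpmap_{u(t_0)}(\widehat{\mathbf{w}})(t)$ for the collocation polynomial. By the remarks after \eqref{eq: next_int_eval}, $w$ has degree $\nqp$ in $t$, satisfies $w(t_0)=u(t_0)$, and fulfils $w'(\tau_j)=G_{\tau_j}w(\tau_j)$. We introduce the defect $r(t):=(\partial_t-G_t)w(t)$, which vanishes at the $\nqp$ Gauss--Legendre nodes $\tau_j$.

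First, the error $e:=w-u$ satisfies the linear problem $e'=G_te+r$ with $e(t_0)=0$. Let $\mathcal{U}(t,s)$ be the propagator of $\partial_t=G_t$, which is unitary since $G_t$ is skew-Hermitian. The variation-of-constants formula then gives
\[ e(t_1)=\int_{t_0}^{t_1}\mathcal U(t_1,s)\,r(s)\,ds . \]
Next, set $g(s):=\mathcal U(t_1,s)r(s)$. Differentiating, $\partial_s\mathcal U(t_1,s)=-\mathcal U(t_1,s)G_s$, so
\[ g'(s)=\mathcal U(t_1,s)(\partial_s-G_s)r(s), \]
and by induction
\[ g^{(m)}(s)=\mathcal U(t_1,s)(\partial_s-G_s)^m r(s)=\mathcal U(t_1,s)(\partial_s-G_s)^{m+1}w(s). \]
Because $\mathcal U$ is unitary, $\norm{g^{(2\nqp)}(s)}=\norm{(\partial_s-G_s)^{2\nqp+1}w(s)}$. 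This explains the operator power appearing in $\kappa_{2\nqp}$, and it uses the assumed temporal regularity.

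Since $g(\tau_j)=0$ for all $j$, the Gauss--Legendre quadrature of $g$ vanishes. Hence $e(t_1)$ equals the quadrature error for $g$, and the standard Gauss remainder gives
\[ \norm{e(t_1)}\le \frac{h^{2\nqp+1}(\nqp!)^4}{(2\nqp+1)[(2\nqp)!]^3}\sup_s\norm{g^{(2\nqp)}(s)}, \]
which equals the stated constant. For the Hilbert-space valued integrand we cannot use a mean-value point. Instead we write the remainder via the Peano kernel, or equivalently integrate $g-\pi g$ against the Hermite interpolant at the doubled nodes. The kernel $\prod_j(s-\tau_j)^2\ge0$ has a fixed sign, so a norm bound with the supremum follows without a mean-value theorem.

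The main obstacle is this vector-valued remainder step, together with rigorously justifying the derivative identity for $g$ with only bounded, time-dependent $G_t$. The identity needs differentiability of $s\mapsto G_s$, which is hidden inside the regularity assumption on $(\partial_s-G_s)^m w$. I would interpret $(\partial_s-G_s)^m$ as iterated application and use the Peano/Hermite form of the remainder so that only $\sup\norm{g^{(2\nqp)}}$ enters.
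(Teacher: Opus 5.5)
Your proposal is correct and follows essentially the same route as the paper. The shared steps are: variation of constants for $e=w-u$; vanishing of the defect at the Gauss nodes, so that $e(t_1)$ is a pure quadrature error; the identity $\partial_s\mathcal U(t_1,s)=-\mathcal U(t_1,s)G_s$, which turns $\partial_s^{2\nqp}$ into $(\partial_s-G_s)^{2\nqp+1}w$; and norm preservation of the flow. Your constant also matches, since $(2\nqp+1)[(2\nqp)!]^3=(2\nqp+1)!\,[(2\nqp)!]^2$. The only difference is that you justify the Hilbert-space-valued Gauss remainder yourself, via the definite Peano kernel or Hermite interpolation at doubled nodes, whereas the paper cites the scalar formula together with a Hilbert-space estimate from its earlier work.
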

	\begin{proof}
		Define $p(t) = \cfpmap_{u(t_0)}(\widehat{\mathbf{w}})(t)$ and $e(t)=p(t)-u(t)$. Then $e$ solves $\partial_te(t) = G_t e(t) + (\partial_t-G_t)p(t)$ with $e(t_0) = 0$.
		The variation of constants formula, see for example \cite[Theorem 11.2]{hairer_solving_1993}, yields
		\begin{equation*}
			e(t_1) = \int_{t_0}^{t_1}\varphi_{s\to t_1}(\partial_s - G_s)p(s)\,ds,
		\end{equation*}
		where $\varphi_{s\to t_1}$ is the exact flow of \eqref{eq:ivp} in $(s,t_1)$.
		Moreover, $(\partial_t - G_{\tau_j})p(\tau_j) = 0$ for all $j=1,\dots,\nqp$ because $p$ is a collocation solution. Therefore,
		\begin{equation*}
			\norm{p(t_1)-u(t_1)} = \norm{e(t_1)} = \biggnorm{\int_{t_0}^{t_1}\varphi_{s\to t_1}(\partial_s-G_s)p(s)\,ds - \sum_{q=1}^\nqp \omega_q \varphi_{\tau_q\to t_1}(\partial_t-G_{\tau_q})p(\tau_q)},
		\end{equation*}
		with $\omega_q$, $q=1,\ldots, Q$, as in \eqref{eq: next_int_eval}.
		Applying standard accuracy results for numerical quadrature (see \cite[Eq.~25.4.29]{AS65} and the Hilbert space estimate in \cite[Lemma~32]{BDS:25}), we obtain
		\begin{equation*}
			\norm{p(t_1)-u(t_1)}\leq C(\nqp)\sup_{s\in[t_0,t_1]}\bignorm{\partial_t^{2\nqp}\left(\varphi_{s\to t_1}(\partial_s - G_s)\cfpmap_{u(t_0)}(\widehat{\mathbf{w}})(s)\right)}.
		\end{equation*}
		
		The result then follows from the identity $\partial_t\varphi_{t\to t_1} g = - \varphi_{t\to t_1}(G_t(g))$
		for all $g\in L^2(\Omega)$. Indeed,
		\begin{equation*}
			\partial_t\varphi_{t\to t_1}g = \lim_{h\to 0}\frac{\varphi_{t+h\to t_1}g-\varphi_{t\to t_1}g}{h}=\lim_{h\to 0}\frac{\varphi_{t+h\to t_1}(g-\varphi_{t\to t+h}g)}{h}
		\end{equation*}
		and $\varphi_{t\to t+h}g = g + h\partial_s\varphi_{t\to s}g\big|_{s=t} + o(h)$
		with $\partial_s\varphi_{t\to s}g\big|_{s=t}  = G_s(\varphi_{t\to s}g)\big|_{s=t} = f_t(g)$,
		which gives the asserted identity. We also use that
		$\norm{\varphi_{t\to t_1}(g)}=\norm{g}$ for all $t$ and all
		$g\in L^2(\Omega)$, since \eqref{eq:ivp} preserves the norm of the
		solution.
	\end{proof}
	
	A result analogous to the above can also be obtained in particular for Gauss-Lobatto points, but with $h^{2Q+1}$ replaced by $h^{2Q-1}$ and a modified constant.
	The following two lemmas are direct adaptations of \cite[Lemma 17 and Proposition 13]{BDS:25} to our framework, and therefore we omit their proofs here.
	
	\begin{lemma}[Local error bound inside subintervals]\label[lemma]{lem:local_inside}
		Let $\widehat{\mathbf{v}}=\fpmap_{v_0}(\widehat{\mathbf{v}})$ be the collocation solution with Gauss-Legendre points  associated to the initial value $v_0$. It then holds that
		\begin{equation*}
			\norm{\cfpmap_{v_0}(\fixedp)(t)-u(t)} \leq \frac{\kappa_\nqp h^{\nqp+1}+\norm{v_0-u(t_0)}}{1-\rho}
		\end{equation*}
		for all $t\in(t_0,t_1)$, where
		\begin{equation*}
			\kappa_\nqp = \frac{\nqp!}{\sqrt{2\nqp+1}(2\nqp)!}\normWg{\partial_t^{\nqp+1} u}.
		\end{equation*}
	\end{lemma}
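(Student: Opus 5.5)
The plan is to compare the collocation polynomial with the exact solution through a perturbation argument in $\cW(t_0,t_1)$. Set $p(t) := \cfpmap_{v_0}(\fixedp)(t)$. Since $\fixedp=\fpmap_{v_0}(\fixedp)$, we have $p(\tau_j)=\widehat v_j$, so $p = \cfpmap_{v_0}(p(\tau_1),\dots,p(\tau_\nqp))$. Let $\mathbf{u}=(u(\tau_1),\dots,u(\tau_\nqp))$.

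By the integral formulation \eqref{eq:int_form}, the exact solution satisfies
\[
u(t)=u(t_0)+\int_{t_0}^t G_s u(s)\,ds .
\]
We split
\[
u(t) - \cfpmap_{u(t_0)}(\mathbf{u})(t) = \int_{t_0}^t \Bigl(G_s u(s) - \sum_{q} G_{\tau_q}u(\tau_q)\ell_q(s)\Bigr)\,ds .
\]
The integrand is the interpolation error of the function $s\mapsto G_s u(s)$ at the nodes $\tau_q$. Using $G_su(s)=\partial_s u(s)$, this is the interpolation error of $\partial_s u$.

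The standard Hilbert-space-valued Lagrange interpolation remainder gives
\[
\partial_s u(s) - \sum_q \partial_t u(\tau_q)\ell_q(s) = \frac{1}{\nqp!}\prod_q(s-\tau_q)\,\partial_t^{\nqp+1}u(\xi),
\]
in an integral/Peano form, as in \cite[Lemma~32]{BDS:25}. Integrating over $[t_0,t]$ and applying Cauchy--Schwarz with the Gauss--Legendre node polynomial, whose $L^2(0,1)$ norm is $(\nqp!)^2/((2\nqp)!\sqrt{2\nqp+1})$, yields
\[
\normWg{u-\cfpmap_{u(t_0)}(\mathbf{u})} \le \kappa_\nqp h^{\nqp+1} .
\]
Here the $h^{\nqp+1}$ arises from $h^\nqp$ in the node polynomial and $h$ from the integration length, while the factor $\nqp!/((2\nqp)!\sqrt{2\nqp+1})$ matches the constant $\kappa_\nqp$ in the statement.

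Next, write
\[
p - u = \bigl(\cfpmap_{v_0}(\fixedp) - \cfpmap_{u(t_0)}(\fixedp)\bigr) + \bigl(\cfpmap_{u(t_0)}(\fixedp)-\cfpmap_{u(t_0)}(\mathbf{u})\bigr) + \bigl(\cfpmap_{u(t_0)}(\mathbf{u})-u\bigr).
\]
The first term equals $v_0-u(t_0)$, since the maps differ only by the initial value. The second is bounded by $\rho\normJ{\fixedp-\mathbf{u}}$ via \eqref{eq:lipsch}. The third is bounded by $\kappa_\nqp h^{\nqp+1}$ from the previous step. Since $\fixedp-\mathbf{u}$ consists of values of $p-u$ at the nodes, $\normJ{\fixedp-\mathbf u}\le \normWg{p-u}$. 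Hence
\[
\normWg{p-u}\le \norm{v_0-u(t_0)}+\rho\normWg{p-u}+\kappa_\nqp h^{\nqp+1},
\]
and since $\rho<1$ and all quantities are finite, rearranging gives the claim for all $t$.

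The main obstacle is getting the precise constant $\kappa_\nqp$, since pointwise $\xi$-remainders are not available for vector-valued functions. I would use the Peano-kernel or divided-difference representation $\partial_t^{\nqp+1}u$ integrated against a B-spline, whose total mass is $1/\nqp!$. Alternatively, I would simply invoke the adapted Hilbert-space estimate of \cite[Lemma~32]{BDS:25}, as done in the proof of \Cref{lem:local_bound}, giving $\sup\norm{\cdot}\le \frac{1}{\nqp!}\normWg{\partial_t^{\nqp+1}u}\,|\prod_q(s-\tau_q)|$. Integrating in $s$ with Cauchy--Schwarz over an interval of length at most $h$ then yields the stated factor.
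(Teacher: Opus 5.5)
Your proposal is correct. It follows essentially the argument the paper defers to (\cite[Lemma 17]{BDS:25}): you bound $u-\cfpmap_{u(t_0)}(\exsol)$ by the integrated Lagrange interpolation error of $\partial_t u = G_t u$, using a vector-valued (Hermite--Genocchi/Peano) remainder bound and the $L^2(0,1)$ norm $(\nqp!)^2/((2\nqp)!\sqrt{2\nqp+1})$ of the Gauss--Legendre node polynomial, and then absorb the term $\rho\normJ{\fixedp-\exsol}\le\rho\normWg{p-u}$ coming from \eqref{eq:lipsch}; this is also consistent with the paper's remark that for other collocation nodes only the prefactor in $\kappa_\nqp$ changes.
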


	Note that with appropriate modifications in the $Q$-dependent prefactor in $\kappa_Q$, \Cref{lem:local_inside} holds for any choice of the quadrature points. The following result, however, depends on the isometry preservation provided in our setting by the Gauss-Legendre points.  %
	\begin{prop}[Local error bound at subinterval boundaries]\label[prop]{prop:error_bound_1}
		The scheme with Gauss-Legendre points satisfies the error bound
		\begin{equation*}
			\norm{u(t_1)-\initsol_1} \leq \norm{u(t_0)-\initsol_0} + \frac{\rho\varepsilon}{1-\rho} + \kappa_{2\nqp}h^{2\nqp+1} + \delta.
		\end{equation*}
	\end{prop}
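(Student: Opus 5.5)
The proof splits the error at $t_1$ into four pieces by the triangle inequality. Let $\comp := \comp_{I,J_I}$ denote the last computed iterate on $[t_0,t_1]$, and let $\fixedp = \fpmap_{\initsol_0}(\fixedp)$ be the exact collocation fixed point for the perturbed initial value $\initsol_0$. Let $\widehat{\mathbf{w}} = \fpmap_{u(t_0)}(\widehat{\mathbf{w}})$ be the collocation fixed point for the exact initial value. Then
\begin{align*}
\norm{u(t_1)-\initsol_1} \leq{}& \norm{\initsol_1 - \cfpmap_{\initsol_0}(\comp)(t_1)} + \norm{\cfpmap_{\initsol_0}(\comp)(t_1) - \cfpmap_{\initsol_0}(\fixedp)(t_1)} \\
&+ \norm{\cfpmap_{\initsol_0}(\fixedp)(t_1) - \cfpmap_{u(t_0)}(\widehat{\mathbf{w}})(t_1)} + \norm{\cfpmap_{u(t_0)}(\widehat{\mathbf{w}})(t_1) - u(t_1)}.
\end{align*}
Each of the four terms is bounded separately.

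\textbf{Recompression term.} Since $\initsol_1 = \recomp_\delta\bigl(\cfpmap_{\initsol_0}(\comp)(t_1)\bigr)$, the first term is at most $\delta$ by the defining property $\norm{v-\recomp_\delta(v)}\le\delta$ of HSVD recompression.

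\textbf{Iteration error term.} Here I use the Lipschitz bound \eqref{eq:lipsch} evaluated at $t=t_1$, which gives
\[
\norm{\cfpmap_{\initsol_0}(\comp)(t_1)-\cfpmap_{\initsol_0}(\fixedp)(t_1)} \le \rho\,\normJ{\comp-\fixedp}.
\]
By the stopping criterion, $\normJ{\fpmap\comp-\comp}<\varepsilon$. The left inequality in \eqref{eq:resequiv} then yields $\normJ{\comp-\fixedp}\le \varepsilon/(1-\rho)$, so this term is at most $\rho\varepsilon/(1-\rho)$.

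\textbf{Propagated initial error.} The third term equals $\norm{\initsol_0-u(t_0)}$ exactly, by \Cref{lem:iso_pres} applied with $v_0=\initsol_0$ and $w_0=u(t_0)$ under Gauss--Legendre nodes. This is the step where the skew-Hermitian structure and the choice of Gauss--Legendre points are essential. Without them one would only get a factor $(1+\rho)/(1-\rho)$ or similar in front of the initial error, which would accumulate badly over the time steps.

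\textbf{Local error term.} The last term is at most $\kappa_{2\nqp}h^{2\nqp+1}$ by \Cref{lem:local_bound}.

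Summing the four bounds gives the claim. There is no genuine obstacle here, since each step invokes a previously stated result. The only point needing care is the second term: one must check that the continuous map $\cfpmap$ at $t_1$ inherits the Lipschitz constant $\rho$ from \eqref{eq:lipsch}, rather than requiring a separate estimate for the endpoint weights $\omega_q$.
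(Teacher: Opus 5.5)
Your proposal is correct and is essentially the argument the paper relies on; the paper omits this proof as a direct adaptation of \cite[Proposition 13]{BDS:25}, and it uses the same four-term triangle inequality with the recompression bound, the Lipschitz estimate \eqref{eq:lipsch} at $t_1$, the isometry of \Cref{lem:iso_pres} and \Cref{lem:local_bound}. The only cosmetic difference is how the iteration term is handled. You bound $\normJ{\comp-\fixedp}\le\varepsilon/(1-\rho)$ via \eqref{eq:resequiv}, whereas the paper, in the proof of the subsequent theorem, reaches the same $\rho\varepsilon/(1-\rho)$ by combining \eqref{eq:normQ_vs_normW} with \eqref{eq:lipsch} and absorbing terms, which is equivalent.
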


	\begin{remark}
		If the time integration scheme is not isometry-preserving, a similar result can be obtained under the weaker assumption of Lipschitz continuity. In this case, the statement of \Cref{lem:iso_pres} is replaced by 
		$\norm{\cfpmap_{u(t_0)}\widehat{\mathbf{w}}(t_1)-\cfpmap_{\initsol_0}\fixedp(t_1)} \leq (1+hL)\norm{u(t_0)-\initsol_{0}}$
		for some positive constant $L$, and the same arguments as in the proof of \Cref{prop:error_bound_1} yield the slightly modified bound
		\begin{equation*}
			\norm{u(t_1)-\initsol_1} \leq (1+hL)\norm{u(t_0)-\initsol_0} + \frac{\rho\varepsilon}{1-\rho} + \kappa_{2\nqp}h^{2\nqp+1} + \delta.
		\end{equation*}
	\end{remark}
	
	\section{Rank estimates with respect to discretizations}\label{sec:ranksdiscr}

	Our main goal is to show that the approximations produced by our method, that is, the iterates $\compn{n}_{i,j}$ and the approximations $\initsol_{n}$ at the beginning of each subinterval, have ranks that remain comparable to corresponding ranks of best approximations of the exact solution $u$. However, as mentioned in the introduction, different reference quantities are possible in the present context: while it is natural to compare the ranks $\initsol_{n}$ to those of $u(t_n)$, that is, to the exact solution at the same time instant, this is not the only reasonable possibility. Indeed, as the best possible evolution of ranks of the iterative scheme on each subinterval is rather determined by the ranks of the respective fixed points $\fixedpn{n}$, one could also be interested in taking the latter as reference ranks.
	
	In this section, we thus want to relate the approximation ranks of $\initsol_{n}$ to best approximation ranks of $\hat u_n := (\cfpmap_n\fixedpn{n})(t_n)$, that is, to the ranks of the fixed-point solution in the current time interval evaluated at the endpoint $t_n$. A schematic representation of the quantities involved in the analysis is presented in \Cref{fig:two_time_steps} for the first two time steps. Note that $\hat u_n$ represents the exact result of one step of \eqref{eq: next_int_eval} with starting value $\tilde u_{n-1}$, that is, including all accumulated errors from previous time steps. One limitation of this type of result is that these reference quantities depend on the choice of discretization and on errors from previous subintervals; for this reason, we will consider rank bounds in terms of the exact solution $u$ in \Cref{sec:comparison_wrt_exact}. 
	
	\begin{figure}[h]
	\centering
		\begin{tikzpicture}[scale=1,
			>=Latex,
			font=\small,
			point/.style={circle, fill=black, inner sep=1.2pt},
			exact/.style={very thick, blue},
			scheme/.style={very thick, green!70!black},
			fp/.style={thick, red, dashed},
			aux/.style={gray!60, thin, dashed},
			]

			\pgfmathsetmacro{\xh}{10}       %
			\pgfmathsetmacro{\xhh}{20}      %
			\pgfmathsetmacro{\xmin}{-1}
			\pgfmathsetmacro{\xmax}{\xhh+2.5}
			\pgfmathsetmacro{\xrange}{\xmax-\xmin}

			\pgfmathsetlengthmacro{\xunit}{0.95*\linewidth/\xrange}
			\begin{scope}[x=\xunit, y=1.0cm] %

				\draw[->] (\xmin,0) -- (\xmax,0) node[below right] {$t$};
				\foreach \x/\lab in {0/0, \xh/h, \xhh/2h}{
					\draw (\x,0.12) -- (\x,-0.12) node[below=2pt] {$\lab$};
					\draw[aux] (\x,0) -- (\x,3.7);
				}

				\coordinate (u0)   at (0,0.6);
				
				\coordinate (uh)   at (\xh,3.2);
				\coordinate (fph)  at (\xh,2.2);
				\coordinate (tuh)  at (\xh,1.0);
				
				\coordinate (u2h)  at (\xhh,2.8);
				\coordinate (fp2h) at (\xhh,1.9);
				\coordinate (tu2h) at (\xhh,0.8);

				\draw[exact,->]
				(u0) .. controls +(4.0,2.8) and +(-4.0,1.0) .. (uh);
				\draw[exact,->]
				(uh) .. controls +(4.0,0.3) and +(-4.0,0.6) .. (u2h);
				
				\draw[fp,->]
				(u0) .. controls +(4.0,2.2) and +(-4.0,0.7) .. (fph);
				\draw[fp,->]
				(tuh) .. controls +(4.0,2.0) and +(-4.0,0.6) .. (fp2h);
				
				\draw[scheme,->]
				(u0) .. controls +(4.0,0.15) and +(-4.0,0.15) .. (tuh);
				\draw[scheme,->]
				(tuh) .. controls +(4.0,-0.90) and +(-4.0,-0.30) .. (tu2h);

				\node[point] at (u0) {};
				\node[point] at (uh) {};
				\node[point] at (fph) {};
				\node[point] at (tuh) {};
				\node[point] at (u2h) {};
				\node[point] at (fp2h) {};
				\node[point] at (tu2h) {};

				\node[anchor=east] at ($(u0)+(-0.3,0)$) {$u_0$};
				
				\node[anchor=south] at ($(uh)+(0,0.15)$) {$u(h)$};
				\node[anchor=west, align=left, text width=0.23\linewidth, xshift=1mm] at (fph)
				{$\hat u_1$};
				\node[anchor=west, align=left, text width=0.23\linewidth, xshift=2.2mm] at (tuh)
				{$\initsol_1$};
				
				\node[anchor=south] at ($(u2h)+(0,0.15)$) {$u(2h)$};
				\node[anchor=west, align=left, text width=0.23\linewidth, xshift=1mm] at (fp2h)
				{$\hat u_2$};
				\node[anchor=west, align=left, text width=0.23\linewidth, xshift=1mm] at (tu2h)
				{$\initsol_2$};

				\matrix[matrix of nodes, anchor=south, column sep=3mm, row sep=1mm]
				at ({0.5*\xhh},3.85)
				{
					\tikz{\draw[exact] (0.2,0)--(1,0);}  & exact solution $u(t_n)$ &
					\tikz{\draw[scheme] (0.2,0)--(1,0);} & approximation $\initsol_n$ &
					\tikz{\draw[fp] (0.2,0)--(1,0);} & fixed point $\hat u_n$ \\
				};
				
			\end{scope}
		\end{tikzpicture}
		\caption{\small Schematic comparison over two time steps starting from the initial condition $u_0$.}\label{fig:two_time_steps}
	\end{figure}

	In the following, we define the rank of an element $\mathbf{v}$ of $L^2(\Omega)^\nqp$ as 
	\begin{equation*}
		\rank_\mat(\mathbf{v}) = \max_{j=1,\dots,\nqp}\rank_\mat(v_j) \qquad \mat=1,2,\dots,E.
	\end{equation*}
	We also define the reference quantity
	\begin{equation} \label{eq: r best}
		r_{\mathrm{best}}(v, \eta) = \min \bigl\{r \in \N_0 \colon \bigl(\exists w \in L^2(\Omega), \max_{\mu=1,\ldots,E} \rank_\mu(w) \leq r \colon \norm{v - w} \leq \eta \bigr) \bigr\} 
	\end{equation}
	as the (uniform) best approximation ranks for an element $v \in L^2(\Omega)$ given the tolerance $\eta \geq 0$.

\subsection{Soft thresholding}

	We start by comparing the ranks of the iterates $\compn{n}_{i,j}$ to those of approximations of the fixed point $\fixedpn{n}$. Therefore, we consider a fixed subinterval and suppress the dependence on $n$ in this subsection to simplify notation.

	We first bound the difference between the quantities $\comp_{i,j}$ produced by the scheme and $\fixedp$ in terms of the distance between $\fixedp$ and the modified fixed points $\modfp{\alpha_i}$ associated with the soft thresholding.
	
	\begin{lemma}\label[lemma]{lem: bound iterates fixedp}
		For $i=1,2, \dots,I$ and $j=0, 1,  \dots, J_i$, 
		\begin{equation*}
			\normJ{\comp_{i,j}-\fixedp}\leq (1+\rho^j)\normJ{\fixedp-\modfp{\alpha_i}} + \frac{\rho^j}{1-\nu}\normJ{\fixedp-\modfp{\alpha_{i-1}}}.
		\end{equation*}
		For $i=0$, we have $J_0=1$ and $\normJ{\comp_{0,j}-\fixedp}\leq(1+\rho^j)\normJ{\fixedp-\modfp{\alpha_0}}$ for $j = 0, 1$.
	\end{lemma}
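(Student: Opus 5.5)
The argument is a triangle inequality through the modified fixed point $\modfp{\alpha_i}$, combined with the contraction property of $\softt_{\alpha_i}\fpmap$ and with the stopping criterion \eqref{eq:thresholdcondition} at the end of the previous inner loop. Since $\softt_{\alpha_i}$ is non-expansive by \eqref{eq:soft_thresh_nonexp}, applied componentwise, and $\fpmap$ is a $\rho$-contraction by \eqref{eq:normQ_vs_normW}--\eqref{eq:lipsch}, the map $\softt_{\alpha_i}\fpmap$ is a $\rho$-contraction with fixed point $\modfp{\alpha_i}$. Hence
\[
\normJ{\comp_{i,j}-\modfp{\alpha_i}}\le\rho^j\normJ{\comp_{i,0}-\modfp{\alpha_i}},
\]
and therefore
\[
\normJ{\comp_{i,j}-\fixedp}\le \normJ{\fixedp-\modfp{\alpha_i}}+\rho^j\bigl(\normJ{\comp_{i,0}-\fixedp}+\normJ{\fixedp-\modfp{\alpha_i}}\bigr).
\]
This already yields the factor $(1+\rho^j)$ in front of $\normJ{\fixedp-\modfp{\alpha_i}}$. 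It remains to show that
\[
\normJ{\comp_{i,0}-\fixedp}\le\frac{1}{1-\nu}\normJ{\fixedp-\modfp{\alpha_{i-1}}}.
\]
Here $\comp_{i,0}=\comp_{i-1,J_{i-1}}$ is the output of the previous inner loop.

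For this key step, set $\mathbf w=\comp_{i-1,J}$ with $J=J_{i-1}$. I would write
\[
\mathbf w-\modfp{\alpha_{i-1}}=\softt_{\alpha_{i-1}}\fpmap\comp_{i-1,J-1}-\softt_{\alpha_{i-1}}\fpmap\modfp{\alpha_{i-1}}.
\]
By contractivity,
\[
\normJ{\mathbf w-\modfp{\alpha_{i-1}}}\le\rho\normJ{\comp_{i-1,J-1}-\modfp{\alpha_{i-1}}}\le\rho\bigl(\normJ{\mathbf w-\comp_{i-1,J-1}}+\normJ{\mathbf w-\modfp{\alpha_{i-1}}}\bigr).
\]
So it suffices to control $\normJ{\mathbf w-\comp_{i-1,J-1}}$. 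By \eqref{eq:thresholdcondition} and then \eqref{eq:resequiv},
\[
\normJ{\mathbf w-\comp_{i-1,J-1}}\le\nu\frac{1-\rho}{\rho(1+\rho)}\normJ{\fpmap\mathbf w-\mathbf w}\le\nu\frac{1-\rho}{\rho}\normJ{\fixedp-\mathbf w}.
\]
Combining these gives
\[
(1-\rho)\normJ{\mathbf w-\modfp{\alpha_{i-1}}}\le\nu(1-\rho)\normJ{\fixedp-\mathbf w},
\]
that is, $\normJ{\mathbf w-\modfp{\alpha_{i-1}}}\le\nu\normJ{\fixedp-\mathbf w}$. The triangle inequality then gives
\[
\normJ{\fixedp-\mathbf w}\le\normJ{\fixedp-\modfp{\alpha_{i-1}}}+\nu\normJ{\fixedp-\mathbf w},
\]
and rearranging yields the desired $\frac{1}{1-\nu}$ bound. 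This requires $J_{i-1}\ge1$. That holds because the condition is only checked after at least one inner step, as in Algorithm~\ref{alg:basic}.

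For $i=0$ we have $\comp_{0,0}=0$, and by the choice of $\alpha_0$ also $\comp_{0,1}=\softt_{\alpha_0}\fpmap\comp_{0,0}=0$. Thus the left side of \eqref{eq:thresholdcondition} vanishes at $j=1$, so $J_0=1$. Since $\comp_{0,1}=\softt_{\alpha_0}\fpmap 0=0$, the point $0$ is a fixed point of $\softt_{\alpha_0}\fpmap$, so $\modfp{\alpha_0}=0=\comp_{0,j}$ for $j=0,1$. The bound $\normJ{\comp_{0,j}-\fixedp}\le(1+\rho^j)\normJ{\fixedp-\modfp{\alpha_0}}$ is then trivial.

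The main obstacle is the middle step. There one has to convert the increment criterion, which involves the residual of $\fpmap$, into closeness to $\modfp{\alpha_{i-1}}$ measured relative to the error with respect to $\fixedp$. The constant $\frac{1-\rho}{\rho(1+\rho)}$ in \eqref{eq:thresholdcondition} has to cancel exactly against \eqref{eq:resequiv} and the contraction factor.
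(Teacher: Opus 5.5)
Your proposal is correct and matches the paper's proof essentially step by step. You use a triangle inequality through $\modfp{\alpha_i}$ together with the $\rho$-contraction of $\softt_{\alpha_i}\fpmap$, then the a posteriori bound $\normJ{\comp_{i-1,J_{i-1}}-\modfp{\alpha_{i-1}}}\le\frac{\rho}{1-\rho}\normJ{\comp_{i-1,J_{i-1}}-\comp_{i-1,J_{i-1}-1}}$ combined with \eqref{eq:thresholdcondition} and the upper bound in \eqref{eq:resequiv} to get the factor $\nu$, and finally rearrange to obtain $(1-\nu)^{-1}$. The $i=0$ case is handled exactly as in the paper, and your remark that $J_{i-1}\ge 1$ is needed (and is guaranteed by Algorithm~\ref{alg:basic}) correctly makes explicit a point the paper leaves implicit.
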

	
	\begin{proof}
		The proof is analogous to the derivation of equation (5.15) in \cite{BS17}. If $i=0$, then $\comp_{0,0}=0$ and $\comp_{0,1}=0$ because of the particular choice of $\alpha_0$. This entails that $J_0=1$ and that $\modfp{\alpha_0}=0$, so that $\normJ{\comp_{0,j}-\fixedp}\leq(1+\rho^j)\normJ{\fixedp-\modfp{\alpha_0}}$ for all $j=0,1$ and the statement holds.
		
		Next, assume that $i\geq1$. Using the triangle inequality together with \eqref{eq: iter def}, we obtain
		\begin{equation*}
			\normJ{\comp_{i,j}-\fixedp}\leq\normJ{\comp_{i,j}-\modfp{\alpha_i}}+\normJ{\modfp{\alpha_i}-\fixedp}=\normJ{\softt_{\alpha_i}\fpmap\comp_{i,j-1}-\softt_{\alpha_i}\fpmap\modfp{\alpha_i}}+\normJ{\modfp{\alpha_i}-\fixedp}.
		\end{equation*}
		The first term can be then further bounded by 
		\begin{equation*}
			\normJ{\softt_{\alpha_i}\fpmap\comp_{i,j-1}-\softt_{\alpha_i}\fpmap\modfp{\alpha_i}} \leq \rho^j\normJ{\comp_{i,0}-\modfp{\alpha_i}} \leq \rho^j \normJ{\comp_{i-1,J_{i-1}}-\fixedp} + \rho^j \normJ{\modfp{\alpha_i}-\fixedp},
		\end{equation*}
		which then yields
		\begin{equation}
			\normJ{\comp_{i,j}-\fixedp} \leq \rho^j\normJ{\comp_{i-1,J_{i-1}}-\fixedp} + (\rho^j+1)\normJ{\modfp{\alpha_i}-\fixedp}. \label{eq:lem1_1}
		\end{equation}

		Moreover, by definition of $\nu$ and \eqref{eq:thresholdcondition}, we obtain
		\begin{align*}
			\normJ{\comp_{i-1,J_{i-1}}-\modfp{\alpha_{i-1}}}&\leq\frac{\rho}{1-\rho}\normJ{\comp_{i-1,J_{i-1}}-\comp_{i-1,J_{i-1}-1}}\leq\frac{\nu}{1+\rho}\normJ{\fpmap\comp_{i-1,J_{i-1}}-\comp_{i-1,J_{i-1}}}\\&\leq\nu\normJ{\comp_{i-1,J_{i-1}}-\fixedp}
		\end{align*}
		and thus by triangle inequality also $\normJ{\comp_{i-1,J_{i-1}}-\fixedp} \leq \nu\normJ{\comp_{i-1,J_{i-1}}-\fixedp}+\normJ{\modfp{\alpha_{i-1}}-\fixedp}$.
		This entails $\normJ{\comp_{i-1,J_{i-1}}-\fixedp}\leq (1-\nu)^{-1}\normJ{\modfp{\alpha_{i-1}}-\fixedp}$,
		which yields the final result using \eqref{eq:lem1_1}.
	\end{proof}
	
	Next, we relate the difference between the fixed-point operator $\fpmap$ applied to iterates $\comp_{i,j}$ and the corresponding fixed-point solution $\fixedp$ to the associated soft thresholding error of the latter.

	\begin{lemma}\label[lemma]{lemma: Fv - hatv}
		For $i \geq 1$ and $j=0,1, \dots,J_i$, 
		\begin{equation*}
			\normJ{\fpmap\comp_{i,j}-\fixedp} \leq \frac{\rho(1+\rho^j)}{1-\rho}\normJ{\fixedp - \softt_{\alpha_i}\fixedp} + \frac{\rho^{j+1}}{(1-\nu)(1-\rho)}\normJ{\fixedp - \softt_{\alpha_{i-1}}\fixedp} \, .
		\end{equation*} 
		For $i=0$ and $j=0,1$, we have the same estimate with only the first summand on the right-hand side.
	\end{lemma}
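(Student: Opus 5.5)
The plan is to combine the contraction property of $\fpmap$ with \Cref{lem: bound iterates fixedp}, and then to bound the distances $\normJ{\fixedp-\modfp{\alpha}}$ by soft thresholding errors of $\fixedp$ itself.

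First, since $\fixedp=\fpmap\fixedp$ and $\fpmap$ is Lipschitz with constant $\rho$ in $\normJ{\cdot}$ (by \eqref{eq:normQ_vs_normW} and \eqref{eq:lipsch}), we have
\[
\normJ{\fpmap\comp_{i,j}-\fixedp}=\normJ{\fpmap\comp_{i,j}-\fpmap\fixedp}\le\rho\,\normJ{\comp_{i,j}-\fixedp}.
\]
Inserting \Cref{lem: bound iterates fixedp} gives
\[
\normJ{\fpmap\comp_{i,j}-\fixedp}\le\rho(1+\rho^j)\normJ{\fixedp-\modfp{\alpha_i}}+\frac{\rho^{j+1}}{1-\nu}\normJ{\fixedp-\modfp{\alpha_{i-1}}}
\]
for $i\ge1$. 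For $i=0$ only the first summand appears.

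Second, for any $\alpha>0$ we show the standard perturbation estimate
\[
\normJ{\fixedp-\modfp{\alpha}}\le\frac{1}{1-\rho}\normJ{\fixedp-\softt_\alpha\fixedp}.
\]
To see this, write $\modfp{\alpha}-\fixedp=\softt_\alpha\fpmap\modfp{\alpha}-\softt_\alpha\fpmap\fixedp+\softt_\alpha\fixedp-\fixedp$, using $\fpmap\fixedp=\fixedp$. The non-expansiveness \eqref{eq:soft_thresh_nonexp}, applied componentwise in $\normJ{\cdot}$, together with the contraction property of $\fpmap$, bounds the first difference by $\rho\normJ{\modfp{\alpha}-\fixedp}$. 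Rearranging then yields the claim, since $\rho<1$.

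Finally, we apply this with $\alpha=\alpha_i$ and $\alpha=\alpha_{i-1}$ and substitute into the bound from the first step. This gives exactly the asserted coefficients $\frac{\rho(1+\rho^j)}{1-\rho}$ and $\frac{\rho^{j+1}}{(1-\nu)(1-\rho)}$. For $i=0$ we use the $i=0$ case of \Cref{lem: bound iterates fixedp} and obtain only the first summand.

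I do not expect a real obstacle. The one point to check is that the componentwise definition of $\softt_\alpha$ on $L^2(\Omega)^\nqp$ inherits non-expansiveness with respect to the max-norm $\normJ{\cdot}$. This holds trivially, because the max of componentwise non-expansive maps is non-expansive.
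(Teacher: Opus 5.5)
Your proposal is correct and follows the paper's proof essentially step for step. The paper also first uses $\normJ{\fpmap\comp_{i,j}-\fixedp}\le\rho\normJ{\comp_{i,j}-\fixedp}$ together with \Cref{lem: bound iterates fixedp}, and then derives $\normJ{\fixedp-\modfp{\alpha}}\le(1-\rho)^{-1}\normJ{\fixedp-\softt_{\alpha}\fixedp}$ by inserting $\softt_{\alpha}\fixedp$ and combining non-expansiveness of $\softt_\alpha$ with the contraction property, applied for $\alpha=\alpha_i$ and $\alpha=\alpha_{i-1}$ (and only the first term when $i=0$).
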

	
	\begin{proof}
		Let $i \geq 1$. An application of Lemma \ref{lem: bound iterates fixedp} in the second step gives the bound
		\begin{equation}\label{eq:lem2_1}
			\normJ{\fpmap\comp_{i,j}-\fixedp}\leq\rho\normJ{\comp_{i,j}-\fixedp}  \leq \rho(1+\rho^j)\normJ{\fixedp-\modfp{\alpha_i}} + \frac{\rho^{j+1}}{1-\nu}\normJ{\fixedp-\modfp{\alpha_{i-1}}} \, .
		\end{equation}
		Inserting and subtracting the intermediate quantity $\softt_{\alpha_i}\fixedp$ yields 
		\begin{equation*}
			\normJ{\fixedp-\modfp{\alpha_i}} \leq \normJ{\fixedp -\softt_{\alpha_i}\fixedp}+\normJ{\softt_{\alpha_i}\fixedp-\modfp{\alpha_i}} \leq \normJ{\fixedp - \softt_{\alpha_i}\fixedp} + \rho\normJ{\fixedp-\modfp{\alpha_i}} \, ,
		\end{equation*}
		and thus
		\begin{equation}\label{eq:mod fp}
			\normJ{\fixedp-\modfp{\alpha_i}}\leq\frac{\normJ{\fixedp - \softt_{\alpha_i}\fixedp}}{1-\rho} \, .
		\end{equation}
		An analogous argument can also be applied to bound $\normJ{\fixedp-\modfp{\alpha_{i-1}}}$. Inserting this into \eqref{eq:lem2_1} then yields the statement of the lemma for $i \geq 1$. The case $i=0$ can be treated analogously. 
	\end{proof} 
	
	We are now in the position to prove the following local rank result for all computed iterates of the fixed-point iteration.
	
	\begin{prop}  \label{prop: ranks iterates fixed p}
		For $i \geq 1$, $j=1, 2, \dots,J_i$ and $\mat=1, 2, \dots,E$, 
		\begin{equation*}
			\rank_\mat{(\comp_{i,j})} \leq \frac{32}{(1-\rho)^2}\frac{\normJ{\fixedp - \softt_{\alpha_i}\fixedp}^2}{\alpha_i^2} + \frac{8/(1-\nu)^2}{\theta^2(1-\rho)^2}\frac{\normJ{\fixedp-\softt_{\alpha_{i-1}}\fixedp}^2}{\alpha_{i-1}^2}+\rank_\mat(\softt_{\mat,\alpha_i/2}\fixedp) \, ,
		\end{equation*}
		and the same bound applies to $\rank_\mat{(\comp_{i+1,0})}$ for $ i \geq 1$;
		moreover, $\comp_{0,0} = \comp_{0,1} = \comp_{1,0} =0$.
	\end{prop}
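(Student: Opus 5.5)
Since $\comp_{i,j}=\softt_{\alpha_i}\fpmap\comp_{i,j-1}$ for $j\ge1$, we bound the ranks of soft-thresholded tensors. The key tool is a rank estimate for soft thresholding of perturbations, as in \cite{BS17} and the matrix case \cite{BDS:25}: for $v,w\in\cH$ and $\alpha>0$,
\[
\rank_\mat(\softt_{\alpha}v)\le \frac{4\norm{v-w}^2}{\alpha^2}+\rank_\mat(\softt_{\mat,\alpha/2}w).
\]
In the composition \eqref{eq:soft_thresh}, the $\mat$-th factor $\softt_{\mat,\alpha}$ is applied to $\softt_{\mat-1,\alpha}\circ\cdots\circ\softt_{1,\alpha}v$, and the later factors do not increase $\rank_\mat$; I would take this rank-monotonicity from \cite{BS17}. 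The matrix argument then applies to this intermediate tensor. For the $k$-th singular value to survive thresholding at level $\alpha$, it must exceed $\alpha$. By Weyl/Mirsky, a singular value above $\alpha$ of the perturbed matrix corresponds either to one above $\alpha/2$ of the reference, or to a perturbation singular value at least $\alpha/2$. The number of the latter is at most $4\norm{\text{perturbation}}^2/\alpha^2$.

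\textbf{Main obstacle.} The intermediate tensor $\softt_{\mat-1,\alpha}\circ\cdots v$ must be compared with an analogous intermediate of the reference, not with the reference itself. To handle this, I would take the reference $w=\fixedp_k$ and apply the same partial composition to it. Each $\softt_{\nu,\alpha}$ is non-expansive, so
\[
\norm{\text{partial}(v)-\text{partial}(\fixedp_k)}\le\norm{v-\fixedp_k}.
\]
The reference term then becomes $\rank_\mat$ of the partial thresholding of $\fixedp_k$, followed by a matrix threshold at $\alpha/2$. I would interpret this as the quantity written $\rank_\mat(\softt_{\mat,\alpha_i/2}\fixedp)$, possibly after another monotonicity argument. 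With $v=(\fpmap\comp_{i,j-1})_k$, we get
\[
\rank_\mat(\comp_{i,j})\le \frac{4\normJ{\fpmap\comp_{i,j-1}-\fixedp}^2}{\alpha_i^2}+\rank_\mat(\softt_{\mat,\alpha_i/2}\fixedp).
\]

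Next, insert Lemma~\ref{lemma: Fv - hatv} with index $j-1\ge0$. Since $\rho^{j-1}\le1$, the first coefficient is at most $2\rho/(1-\rho)\le 2/(1-\rho)$ and the second at most $1/((1-\nu)(1-\rho))$. Applying $(a+b)^2\le2a^2+2b^2$ gives
\[
4\cdot 2\cdot\frac{4}{(1-\rho)^2}=\frac{32}{(1-\rho)^2}
\]
for the first term. The second term is $8/((1-\nu)^2(1-\rho)^2)$ divided by $\alpha_i^2$. Using $\alpha_i=\theta\alpha_{i-1}$ converts $1/\alpha_i^2$ into $1/(\theta^2\alpha_{i-1}^2)$, giving exactly the stated constants.

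For $\comp_{i+1,0}=\comp_{i,J_i}$ with $J_i\ge1$, the same bound holds directly. The statements $\comp_{0,0}=\comp_{0,1}=0$ hold by the choice of initialization and $\alpha_0$, and $\comp_{1,0}=\comp_{0,J_0}$ with $J_0=1$ from Lemma~\ref{lem: bound iterates fixedp}.
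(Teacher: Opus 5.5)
Your proposal follows the paper's proof. The paper also invokes \cite[Lemma 4.3]{BS17}, which is exactly the estimate you state, and then inserts Lemma~\ref{lemma: Fv - hatv} at index $j-1$; your constant bookkeeping via $(a+b)^2\le 2a^2+2b^2$ and $\alpha_i=\theta\alpha_{i-1}$ is correct, as is your handling of $\comp_{i+1,0}=\comp_{i,J_i}$ and $\comp_{1,0}=\comp_{0,1}=0$.

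The one loose end is in your optional re-derivation of the cited lemma, and it closes as follows. Soft thresholding at any other node of the tree, which is nested with or disjoint from $s_\mu$, multiplies $\cM_\mu$ from one side by an operator of norm at most one. It therefore decreases each $\sigma_{\mu,k}$, not merely $\rank_\mu$, and this bounds your partially thresholded reference term by $\rank_\mu(\softt_{\mu,\alpha_i/2}\fixedp)$.
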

	
	\begin{proof}
		We first assume $i \geq 1$, $j = 1,2, \dots, J_i$ and use \cite[Lemma 4.3]{BS17} to obtain the bound
		\begin{equation*}
			\rank_\mat(\comp_{i,j})=\rank_\mat(\softt_{\alpha_i}\fpmap\comp_{i,j-1}) \leq 4\frac{\normJ{\fixedp - \fpmap\comp_{i,j-1}}^2}{\alpha_i^2} + \rank_\mat(\softt_{\mu,\alpha_i/2} \fixedp),
		\end{equation*}
		and then apply Lemma \ref{lemma: Fv - hatv} to $\normJ{\fixedp - \fpmap\comp_{i,j-1}}$. 
		If $i = 1$ and $j = 0$, then using that $J_0 = 1$ we obtain $\rank_\mat(\comp_{1,0}) = \rank_\mat(\comp_{0,1}) = 0$,
		since $\comp_{0,0} = \comp_{0,1} = 0$ and the corresponding ranks are thus all equal to zero.
		For $i \geq 2$ and $j=0$ we can use that $\rank_\mat(\comp_{i,0})= \rank_\mat(\comp_{i-1,J_{i-1}}) = \rank_\mat(\softt_{\alpha_{i-1}}\fpmap\comp_{i-1,J_{i-1}-1})$ and argue analogously to the first case.
	\end{proof}
	
	Note that the above proposition holds for all time subintervals without any restrictions on the accuracy of the scheme. Therefore, using the appropriate truncation tolerances and fixed points $\fixedpn{n}$, it can also be extended without further restrictions to hold globally.

	In order to relate the obtained bounds to the quasi-optimal ones, however, one needs to assume a certain decay property of the singular values of the individual matricizations.

	\subsection{Algebraic and exponential decay of singular values}\label[subsection]{subsec:sv_decay}
	
	To interpret the rank bounds from Proposition~\ref{prop: ranks iterates fixed p}, we relate them to the intrinsic compressibility of the fixed point $\fixedp$. 
	
	For each matricization $\mat = 1, \dots,E$ and $q=1,\dots,\nqp$, recall that $(\sigma_{\mat,k}(\hat v_q))_{k\ge1}$ denote the singular values of $\fixedp=(\hat v_q)_{q=1,\dots,Q}$. Their decay determines the rate of best low-rank approximation and, equivalently, the growth of the minimal rank required to reach a prescribed accuracy $\varepsilon$. Under the assumption of algebraically or exponentially decaying singular values, we obtain quasi-optimal rank bounds for the computed iterates.
		
	\begin{cor}[Quasi-optimal ranks] \label{cor:quasioptlocal}
	Let $\fixedp = (\hat v_q)_{q = 1, \dots, \nqp}$. We then have the following bounds on ranks of iterates for the given time subinterval.
		\begin{enumerate}[\rm(i)]
			\item If $\sigma_\mat(\hat v_q)\in\ell^{p,\infty}$ for all $q=1,2,\dots, \nqp$ and some $p\in(0,2)$, for all $\mat=1, 2, \dots,E$, then
			\begin{equation*} 
				\normJ{\comp_{i,j}-\fixedp}\leq k_1EM(\fixedp)^{\frac{p}{2}}\alpha_i^{1-\frac{p}{2}}, \qquad
				\rank_\mat(\comp_{i,j}) \leq k_2(1+E^2)M(\fixedp)^p\alpha_i^{-p},
			\end{equation*}
			where $M(\fixedp):=\max_\mat \max_q\lvert\sigma_\mat(\hat v_q)\rvert_{\ell^{p,\infty}}$ and the constants $k_1,k_2$ depend only on $p$, $\rho$, $\nu$ and $\theta$. In particular, setting $\lambda_{i,j}:=M(\fixedp)^{\frac{p}{2}}\alpha_i^{1-\frac{p}{2}}$ we have
			\begin{equation*}
				\normJ{\comp_{i,j}-\fixedp}\lesssim E\lambda_{i,j}, \qquad \max_{\mat=1,\dots,E}\rank_\mat(\comp_{i,j}) \lesssim E^2 M(\fixedp)^{\frac{1}{s}}\lambda_{i,j}^{-\frac{1}{s}}, \text{ where } s = \frac{1}{p}-\frac12.
			\end{equation*}
			\item If $\sigma_{\mat,j}(\hat v_q)\leq Ce^{-cj^\beta}$ for all $q = 1, 2, \dots, Q$, $j \in \mathbb N$ and $\mat=1, 2,  \dots,E$, then
			\begin{equation*}
				\normJ{\comp_{i,j}-\fixedp}\leq k_3E(1+i\lvert\log\theta\rvert)^{\frac{1}{2\beta}}\theta^i, \qquad
				\rank_\mat(\comp_{i,j}) \leq k_4(1+E^2)(1+i\lvert\log\theta\rvert)^{\frac{1}{\beta}},
			\end{equation*}
			where the constants $k_3$, $k_4$ depend only on $C, c, \beta, \alpha_0, \rho, \nu$ and $\theta$. In particular, setting $\lambda_{i,j}:=\tilde{\theta}^i$ for some $\tilde{\theta}\in(\theta,1)$ we have
			\begin{equation*}
				\normJ{\comp_{i,j}-\fixedp}\lesssim E\lambda_{i,j}, \qquad \max_{\mat=1,\dots,E}\rank_\mat(\comp_{i,j})\lesssim E^2(1+\lvert\log\lambda_{i,j}\rvert)^\frac{1}{\beta}.
			\end{equation*}
		\end{enumerate}
	\end{cor}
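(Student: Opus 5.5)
The plan is to combine Lemma~\ref{lem: bound iterates fixedp} (with \eqref{eq:mod fp}) and Proposition~\ref{prop: ranks iterates fixed p}. Both reduce everything to two scalar quantities per matricization and component: the thresholding error $\normJ{\fixedp-\softt_{\alpha}\fixedp}$ at $\alpha=\alpha_i,\alpha_{i-1}$, and the rank $\rank_\mu(\softt_{\mu,\alpha_i/2}\fixedp)$. So the proof is mainly about estimating these two quantities under the decay assumptions. For the error, \eqref{eq:mod fp} and Lemma~\ref{lem: bound iterates fixedp} give
\[
\normJ{\comp_{i,j}-\fixedp}\lesssim \frac{1}{1-\rho}\Bigl(\normJ{\fixedp-\softt_{\alpha_i}\fixedp}+\tfrac{1}{1-\nu}\normJ{\fixedp-\softt_{\alpha_{i-1}}\fixedp}\Bigr),
\]
with $\alpha_{i-1}=\alpha_i/\theta$. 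For $\hat v_q$ we telescope the composition \eqref{eq:soft_thresh}, writing $w_0=\hat v_q$ and $w_\mu=\cS_{\mu,\alpha}w_{\mu-1}$. Then $\norm{\hat v_q-\cS_\alpha\hat v_q}\le\sum_{\mu}\norm{w_{\mu-1}-\cS_{\mu,\alpha}w_{\mu-1}}$.

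The main obstacle is that $w_{\mu-1}$ is no longer $\hat v_q$, so the singular value decay of its $\mu$-matricization is not directly assumed. I would bound each single step by $\min(\alpha\,\rank\text{-count},\ \text{tail})$. Concretely, for any $X$ we have $\norm{X-S_\alpha X}^2=\sum_k\min(\sigma_k,\alpha)^2$. By Weyl/Mirsky-type perturbation, and the non-expansiveness in \eqref{eq:soft_thresh_nonexp}, the map $X\mapsto X-S_\alpha X$ is itself $1$-Lipschitz. (It is the projection onto the nuclear-dual ball $\{\norm{\cdot}_{2\to2}\le\alpha\}$, hence non-expansive.) Therefore
\[
\norm{w_{\mu-1}-\cS_{\mu,\alpha}w_{\mu-1}}\le \norm{\hat v_q-\cS_{\mu,\alpha}\hat v_q}+\norm{w_{\mu-1}-\hat v_q}.
\]
Iterating this naively gives exponential growth in $E$. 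Instead I would use the cruder but linear route: each step only removes singular values below $\alpha$ along one matricization. Since $\cS_{\mu',\alpha}$ for $\mu'\neq\mu$ is an orthogonal-projection-like contraction in the sense of \cite{BS17}, it does not increase the $\mu$-singular values. So the $\mu$-matricization singular values of $w_{\mu-1}$ are dominated entrywise by those of $\hat v_q$, and $\sum_k\min(\sigma_k,\alpha)^2$ is monotone in $\sigma$. I expect this domination claim, that soft thresholding one matricization does not increase the singular values of another in a nested tree, to be the delicate point. If it fails I would fall back on \cite[Lemma 4.2]{BS17}-type estimates. Granting it, each summand is at most $\bigl(\sum_k\min(\sigma_{\mu,k}(\hat v_q),\alpha)^2\bigr)^{1/2}$, and summing over $\mu$ produces the factor $E$.

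For (i), $\sigma_{\mu,k}\le M k^{-1/p}$ gives $\sum_k\min(\sigma_{\mu,k},\alpha)^2\lesssim M^p\alpha^{2-p}$. Splitting at $k^*=(M/\alpha)^p$ yields the error bound $E\,M^{p/2}\alpha_i^{1-p/2}$ with $\theta$-dependent constants. For the rank term, $\rank_\mu(\softt_{\mu,\alpha_i/2}\fixedp)\le\#\{k:\sigma_{\mu,k}>\alpha_i/2\}\le 2^pM^p\alpha_i^{-p}$. The quotients in Proposition~\ref{prop: ranks iterates fixed p} give $E^2M^p\alpha^{2-p}/\alpha^2=E^2M^p\alpha^{-p}$. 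Summing yields $k_2(1+E^2)M^p\alpha_i^{-p}$. The case $j=0$ and $i=0$ are trivial by Proposition~\ref{prop: ranks iterates fixed p}. The reformulation in terms of $\lambda_{i,j}$ is algebra: $\alpha_i^{-p}=M^{p/(1-p/2)-p}\lambda^{-p/(1-p/2)}$ with $p/(1-p/2)=1/s$.

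For (ii), the counts are $\#\{k:Ce^{-ck^\beta}>\alpha\}\lesssim(1+|\log\alpha|)^{1/\beta}$ and $\sum_k\min(\sigma_k,\alpha)^2\lesssim\alpha^2(1+|\log\alpha|)^{1/\beta}$. This second bound holds because the tail beyond the count sum is $O(\alpha^2)$ by the super-geometric decay. With $\alpha_i=\theta^i\alpha_0$ this gives the error bound $E(1+i|\log\theta|)^{1/(2\beta)}\theta^i$ and the rank bound $(1+E^2)(1+i|\log\theta|)^{1/\beta}$. Finally, $\lambda=\tilde\theta^i$ absorbs the logarithmic factor, since $i^{1/(2\beta)}\theta^i\lesssim\tilde\theta^i$ and $i\eqsim|\log\lambda|$.
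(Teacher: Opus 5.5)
Your proposal is correct and follows the paper's route. The paper simply cites \cite[Proposition 3.6]{BS17} for the thresholding-error and count estimates, which is exactly what you reconstruct by telescoping. The domination claim you flag does hold: $\cS_{\mu,\alpha}v=\cM_\mu^{-1}\bigl(A\,\cM_\mu(v)\bigr)$ with $A=U\operatorname{diag}\bigl((1-\alpha/\sigma_k)_+\bigr)U^*$ and $\norm{A}\le 1$, or equivalently a right multiplication by $V\operatorname{diag}\bigl((1-\alpha/\sigma_k)_+\bigr)V^*$. Since any two nodes of a dimension tree are nested or disjoint, this is a one-sided contraction of every other matricization, so $\sigma_{\mu'}(\cS_{\mu,\alpha}v)\le\sigma_{\mu'}(v)$ entrywise.

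One small correction concerns part (ii): for $\beta<1$ the decay is slower than geometric, not super-geometric. The tail $\sum_{k>k^*}C^2e^{-2ck^\beta}$ is then only $\lesssim\alpha^2(1+\abs{\log\alpha})^{(1-\beta)/\beta}$ rather than $O(\alpha^2)$. This still stays within your claimed bound $\alpha^2(1+\abs{\log\alpha})^{1/\beta}$.
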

	
	The result follows immediately from \cite[Proposition 3.6]{BS17} combined with Lemma \ref{lem: bound iterates fixedp}, equation \eqref{eq:mod fp} and Proposition \ref{prop: ranks iterates fixed p}.
	What now remains to consider is a comparison of the obtained approximation ranks at the subinterval boundaries.
	
	\subsection{Recompression at interval endpoints and global bounds}

	Finally, in order to be able to relate the approximation ranks of $\initsol_{n}$ to best approximation ranks of $\hat u_n$, a recompression is performed in \eqref{eq:nextinitsol}. This approach is similar to the scheme in \cite{BDS:25}, but here the recompression tolerance is adjusted adaptively.
	The following lemma is a restatement of \cite[Lemma~2]{bachmayr_adaptive_2015} (see also \cite[Lemma~5.4]{Bachmayr23}).  It provides a relation to best approximation ranks based on rank truncations with sufficiently large tolerances. 
	
	\begin{lemma} \label{lem: ht best tolerance}
		Let $\kappa = \sqrt{E}$ and $\alpha>0$. Then for $u, v \in L^2(\Omega)$ such that $\norm{ u - v } \leq \eta$ and $\tilde v= \mathcal R_{\kappa (1 + \alpha)\eta} (v)$, we have $\norm{ u - \tilde v} \leq \bigl(1 + \kappa(1+\alpha) \bigr) \eta$ while
		\begin{equation*}
			\max_{\mu=1,\ldots,E} \rank_\mu(\tilde v) \leq r_{\mathrm{best}} (u, \alpha \eta) \, ,
		\end{equation*}
		where the best approximation ranks are defined as in \eqref{eq: r best}.
	\end{lemma}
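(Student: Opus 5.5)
The error bound follows from the triangle inequality alone. The rank bound comes from comparing the tail sums that define $\recomp_\delta$ for $v$ with the singular values of a best approximation $w$ of $u$.

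For the error bound, write
\[
\norm{u-\tilde v}\le \norm{u-v}+\norm{v-\tilde v}\le \eta+\kappa(1+\alpha)\eta .
\]
The second term is bounded by $\kappa(1+\alpha)\eta$ because $\recomp_\delta$ satisfies $\norm{v-\recomp_\delta(v)}\le\delta$, by its construction via the HSVD tail bound.

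For the rank bound, set $r := r_{\mathrm{best}}(u,\alpha\eta)$ and fix $w$ with $\max_\mu \rank_\mu(w)\le r$ and $\norm{u-w}\le\alpha\eta$. Then $\norm{v-w}\le (1+\alpha)\eta$. It suffices to show that the uniform target rank vector $k_\mu = r$ for all $\mu$ satisfies the admissibility condition
\[
\sum_{\mu=1}^E\sum_{j>r}\sigma_{\mu,j}(v)^2\le \kappa^2(1+\alpha)^2\eta^2 .
\]
Since $k_\delta(v)$ minimizes $\max_\mu k_{\delta,\mu}$ subject to this condition, this gives $\max_\mu\rank_\mu(\tilde v)\le \max_\mu k_{\delta,\mu}(v)\le r$.

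To verify the condition, consider each matricization $\cM_\mu$ separately. It is an isometry onto Hilbert–Schmidt operators, so $\norm{\cM_\mu(v)-\cM_\mu(w)}_{HS}=\norm{v-w}$. Moreover $\rank\cM_\mu(w)\le r$. By the Eckart–Young–Schmidt–Mirsky theorem, the best rank-$r$ approximation error of $\cM_\mu(v)$ in Hilbert–Schmidt norm equals $\bigl(\sum_{j>r}\sigma_{\mu,j}(v)^2\bigr)^{1/2}$. Hence
\[
\sum_{j>r}\sigma_{\mu,j}(v)^2\le\norm{v-w}^2\le(1+\alpha)^2\eta^2 .
\]
Summing over $\mu=1,\dots,E$ gives the bound $E(1+\alpha)^2\eta^2=\kappa^2(1+\alpha)^2\eta^2$, as required.

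The only delicate point is making sure that $\recomp_\delta$ is defined so that an admissible rank vector with smaller maximum forces the selected ranks to be no larger. This holds by the minimality in the definition of $k_\delta(v)$: the uniform choice $k_\mu=r$ is admissible, so the minimal maximal rank is at most $r$. Truncation at ranks $k_{\delta,\mu}$ then yields $\rank_\mu(\tilde v)\le k_{\delta,\mu}$. Infinite ranks cause no issue, since if $r=\infty$ there is nothing to prove.
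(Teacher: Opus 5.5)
Your proof is correct and is the standard argument behind the result the paper cites without proof, \cite[Lemma~2]{bachmayr_adaptive_2015}. There one takes a best approximation $w$ of $u$ with ranks at most $r = r_{\mathrm{best}}(u,\alpha\eta)$ and applies the quasi-optimality bound $\bigl(\sum_{\mu}\sum_{j>r}\sigma_{\mu,j}(v)^2\bigr)^{1/2} \le \sqrt{E}\,\norm{v-w} \le \kappa(1+\alpha)\eta$ from Section~\ref{sec:lowranktensors} with uniform target ranks $r$, which is exactly what your per-matricization Eckart--Young argument rederives.
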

	
	Using the previous lemma, we are directly in the position to prove $(i)$ rank bounds on the approximations $\initsol_{n}$ at the subinterval boundaries, and $(ii)$ a global error bound which scales only linearly in the final time $T$.
	
	\begin{theorem}
		Choosing the iteration and recompression tolerances proportional to the time discretization error of the Gauss method, that is, $\varepsilon_n = h^{2Q}$ and $\delta_n = \frac{2 \kappa \rho}{1- \rho} h^{2Q}$, with $\kappa=\sqrt{E}$, the computed approximations $\initsol_n$ satisfy
		\begin{equation*}
			\norm{\initsol_n-u(t_n)} \leq \eta_0 + \Big(\kappa_{2\nqp} + \frac{C_G \Lambda_Q(1 +  2 \kappa)}{1- \rho} \Big)h^{2\nqp}t_n ,
		\end{equation*}
		for all $1 \leq n \leq N$, and their ranks are bounded by
		\begin{equation*}
			\max_{\mu=1,\ldots,E} \rank_\mu(\tilde u_n) \leq r_{\mathrm{best}} \Big(\hat u_n, \frac{\rho \varepsilon_n }{1- \rho}  \Big) \, .
		\end{equation*}
	\end{theorem}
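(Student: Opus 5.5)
The plan is to apply \Cref{lem: ht best tolerance} on each subinterval with the fixed-point value $\hat u_n = \cfpmap_n\fixedpn{n}(t_n)$ as reference. Then to sum the per-step error increments, using the isometry property of \Cref{lem:iso_pres} together with the local error bound of \Cref{lem:local_bound}. Write $w_n := \cfpmap_n\compn{n}_{I_n,J_{I_n}}(t_n)$ for the quantity that is recompressed in \eqref{eq:nextinitsol}. Set
\[
\eta := \frac{\rho\varepsilon_n}{1-\rho}
\]
and take the parameter $\alpha = 1$ in \Cref{lem: ht best tolerance}. With this choice $\kappa(1+\alpha)\eta = \frac{2\kappa\rho}{1-\rho}h^{2Q} = \delta_n$, so $\initsol_n = \recomp_{\delta_n}(w_n)$ is exactly the truncation considered in that lemma.

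First step: verify the hypothesis $\norm{\hat u_n - w_n}\le \eta$. By \eqref{eq:lipsch}, evaluated at $t = t_n$,
\[
\norm{\hat u_n - w_n} \le \rho\,\normJ{\fixedpn{n} - \compn{n}_{I_n,J_{I_n}}}.
\]
By the left inequality in \eqref{eq:resequiv}, the right-hand side is at most $\frac{\rho}{1-\rho}\normJ{\fpmap_n\compn{n}_{I_n,J_{I_n}} - \compn{n}_{I_n,J_{I_n}}}$. The stopping criterion \eqref{eq:stoppingn} bounds this by $\frac{\rho\varepsilon_n}{1-\rho} = \eta$. \Cref{lem: ht best tolerance} then immediately gives the rank bound
\[
\max_\mu \rank_\mu(\initsol_n) \le r_{\mathrm{best}}(\hat u_n, \eta),
\]
which is the second assertion. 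It also gives the error bound $\norm{\hat u_n - \initsol_n} \le (1+2\kappa)\eta$.

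Second step: the error recursion. Let $\widehat{\mathbf w}$ be the collocation solution started from the exact value $u(t_{n-1})$. Split
\[
\norm{u(t_n) - \initsol_n} \le \norm{u(t_n) - \cfpmap_{u(t_{n-1})}(\widehat{\mathbf w})(t_n)} + \norm{\cfpmap_{u(t_{n-1})}(\widehat{\mathbf w})(t_n) - \hat u_n} + \norm{\hat u_n - \initsol_n}.
\]
\Cref{lem:local_bound} bounds the first term by $\kappa_{2Q}h^{2Q+1}$. \Cref{lem:iso_pres} (Gauss--Legendre nodes) makes the second term equal to $\norm{u(t_{n-1}) - \initsol_{n-1}}$. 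The third term is at most $(1+2\kappa)\frac{\rho}{1-\rho}h^{2Q}$ by the first step. Substituting $\rho = C_G\Lambda_Q h$ from \eqref{eq:rhodef}, the increment per step becomes
\[
h^{2Q+1}\Big(\kappa_{2Q} + \frac{C_G\Lambda_Q(1+2\kappa)}{1-\rho}\Big).
\]
Inducting over $n$ from $\norm{u(0) - \initsol_0}\le\eta_0$ and using $nh = t_n$ gives the stated global bound.

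Main obstacle: I expect no serious difficulty. The only delicate point is not to use \Cref{prop:error_bound_1} as stated: adding $\delta_n$ on top of the $\rho\varepsilon/(1-\rho)$ term would produce a constant $1+2\kappa\cdot 1$ combined differently. The bound from \Cref{lem: ht best tolerance} should be used instead, since it packages the iteration error and the recompression error together as $(1+2\kappa)\eta$ and yields exactly the claimed constant.
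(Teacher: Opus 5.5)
Your proof is correct and follows essentially the paper's argument. Like the paper, you apply Lemma~\ref{lem: ht best tolerance} with $\alpha=1$ and $\eta=\frac{\rho\varepsilon_n}{1-\rho}$ to get the rank bound; you verify its hypothesis via \eqref{eq:resequiv} rather than the paper's self-referential $\cW(t_{n-1},t_n)$-norm estimate, which is an inessential difference. Your error recursion is an inline proof of \Cref{prop:error_bound_1}. The worry in your last paragraph is unfounded: citing \Cref{prop:error_bound_1} directly gives $\frac{\rho\varepsilon_n}{1-\rho}+\delta_n=(1+2\kappa)\frac{\rho\varepsilon_n}{1-\rho}$ exactly, and this is precisely how the paper obtains the same constant.
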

	
	Note that here, the magnitude of the recompression tolerance $\delta_n$ only scales with the square root of the dimension $d$ of the problem.
	
	\begin{proof}
		From \eqref{eq:lipsch} we first obtain
		\begin{equation*}
			\| \cfpmap_n\compn{n}_{I_n,J_{I_n}} -  \cfpmap_n\fixedpn{n} \|_{\cW(t_{n-1},t_n)} \leq \rho \normJ{\compn{n}_{I_n,J_{I_n}} - \fixedpn{n}} \, ,
		\end{equation*}
		where, using \eqref{eq:normQ_vs_normW},
		\begin{align*}
			\normJ{\compn{n}_{I_n,J_{I_n}} - \fixedpn{n}} &\leq \normJ{ \compn{n}_{I_n,J_{I_n}} - \fpmap_n \compn{n}_{I_n,J_{I_n}}} +  \normJ{\fpmap_n \compn{n}_{I_n,J_{I_n}} - \fixedpn{n}} \\
			&\leq \varepsilon_n +  \norm{\cfpmap_n\compn{n}_{I_n,J_{I_n}} - \cfpmap_n\fixedpn{n}}_{\cW(t_{n-1},t_n)} \, .
		\end{align*}
		Overall, this yields the bound 
		\begin{equation*}
			\| \cfpmap_n\compn{n}_{I_n,J_{I_n}}(t_n) -  \cfpmap_n\fixedpn{n}(t_n) \| \leq \norm{\cfpmap_n\compn{n}_{I_n,J_{I_n}} - \cfpmap_n\fixedpn{n}}_{\cW(t_{n-1},t_n)}\leq \frac{\rho \varepsilon_n }{1- \rho} \, .
		\end{equation*}
		Choosing $\delta_n = \frac{2 \kappa \rho \varepsilon_n }{1- \rho}$ and $\alpha = 1$ in Lemma \ref{lem: ht best tolerance} then gives us
		\begin{equation*}
			\max_{\mu=1,\ldots,E} \rank_\mu(\tilde u_n) \leq r_{\mathrm{best}} \Big(\cfpmap_n\fixedpn{n}(t_n), \frac{\rho \varepsilon_n }{1- \rho} \Big) \, .
		\end{equation*}
		Note that none of the computed rank bounds relies on any limitations regarding the choice of the iteration tolerances $\varepsilon_n$. 
		
		Finally, inductive application of \Cref{prop:error_bound_1} yields
		\begin{equation} 
			\norm{\initsol_n-u(t_n)} \leq \eta_0 +\kappa_{2\nqp}h^{2\nqp+1}n  + \frac{\rho (1 + 2 \kappa)}{1-\rho}\sum_{k=1}^n\varepsilon_k.
		\end{equation}
		Choosing $\varepsilon_k = h^{2Q}$, we can simplify the above expression to
		\begin{equation*}
			\norm{\initsol_n-u(t_n)} \leq \eta_0 + \Big(\kappa_{2\nqp} + \frac{C_G \Lambda_Q(1 +  2 \kappa)}{1- \rho} \Big)h^{2\nqp} t_n,
		\end{equation*}
		which scales only linearly in the time $t_n$.
	\end{proof}
	
	\begin{remark}[Rank control for Gauss-Lobatto nodes]
	At first sight, the direct use of stage endpoint values produced by the fixed-point iteration as in \eqref{eq:nextinitsol_lobatto} seems to be an advantage of Gauss-Lobatto nodes, since a possible rank increase due to the application by $\Phi_n$ is avoided and in principle, no further rank truncation would be required. However, due to the lack of isometry preservation for our formulation, we only obtain an error bound that is exponential rather than linear with respect to $T$. Moreover, the additional application of $\Phi_n$ in \eqref{eq:nextinitsol} also has the effect of reducing the error by a further factor of $\rho$, which plays as important role in the further analysis in the following section.
	\end{remark}
	
	Proposition~\ref{prop: ranks iterates fixed p} and Corollary~\ref{cor:quasioptlocal} show that, under algebraic or exponential decay of the matricization singular values of the fixed point $\fixedp$, the ranks of the computed iterates $\comp_{i,j}$ exhibit the same scaling in the target accuracy as the corresponding best-approximation ranks $r_{\mathrm{best}}(\fixedp,\cdot)$, up to constants depending only on $E, \rho,\nu,\theta$ as well as  the decay parameters. In particular, the dependence on the dimension enters explicitly only polynomially through $d^2$ (however, note that a further dependence on $d$ can arise from problem parameters and solution approximability).
	
	At the subinterval boundaries, the ranks of the recompressed approximations $\initsol_n$ are bounded by the best approximation ranks of $\hat u_n$ for a smaller tolerance $\tfrac{\rho}{1-\rho}\varepsilon_n$. Here the factor $\rho$, which is proportional to $h$ according to \eqref{eq:rhodef}, arises from the Lipschitz estimate for the evaluation map. This modification of the tolerance by a constant factor does not affect the resulting quasi-optimal approximation rates, that is,
	\begin{equation*}
		r_{\mathrm{best}}\bigl(\hat u_n , \tfrac{\rho}{1-\rho}\varepsilon_n\bigr) \lesssim (\rho \varepsilon_n)^{-\frac1s} \quad \text{and} \quad r_{\mathrm{best}}\bigl(\hat u_n, \tfrac{\rho}{1-\rho}\varepsilon_n\bigr) \lesssim \bigl(1 +\abs{\log \rho} +\abs{\log \varepsilon_n}\bigr)^{\frac1\beta} \, ,
	\end{equation*}
	if the singular value sequences $\sigma_\mu(\hat u_n)$ have algebraic or exponential decay as in Corollary \ref{cor:quasioptlocal}.

	\section{Rank estimates with respect to exact solutions}\label{sec:comparison_wrt_exact}
	
	The analysis outlined in the previous section shows that we can easily obtain local bounds in terms of the fixed point solution in each time subinterval.  While comparisons with the fixed points $\fixedpn{n}$ are natural for analyzing the algorithm on each subinterval, the ranks of $\fixedpn{n}$ and $\hat u_n$ are not purely problem-intrinsic: they depend, among other things, on the chosen spatial discretization, on the quadrature/collocation parameters, and on perturbations inherited from previous steps such as  recompression and iteration errors. In particular, spatial discretization errors may alter the singular value decay of the discrete solution, for example, by introducing oscillations or numerical noise, which can artificially increase or decrease the effective ranks needed to represent $\fixedpn{n}$.
	
	Therefore, comparing the obtained ranks also to those required for approximating the true solution $u(t_n)$  provides a discretization-independent benchmark and clarifies whether the method tracks the intrinsic low-rank structure of the underlying continuous problem rather than potential artifacts of the discretization. For this reason, in this section we set out to derive error and rank bounds on the computed approximations by taking the exact solution $u$ as a reference.
	
	\subsection{Challenges in generalizing the approach of \cite{BDS:25} to higher-order tensors}
	Our starting point is the analysis developed in \cite{BDS:25} for the case $d=2$. There, with $E = 1$ for $d=2$, the analysis crucially relies on the stability bound
	\begin{equation}\label{eq:stabilityE}
			\normJ{ \mathbf u - \mathbf \cS_{\alpha}\mathbf u} \leq  E \normJ{ \mathbf u - \mathbf \cS_{\beta}\mathbf u} \leq E^2 \frac{\beta}{\alpha} \normJ{ \mathbf u - \mathbf \cS_{\alpha}\mathbf u }
	\end{equation}
 	to compare soft-thresholding errors associated to different thresholds. This is shown in \cite[Lemma 10]{BDS:25} for $d=2$ and in Appendix \ref{app:ststab} for general $d$. While \eqref{eq:stabilityE} is sharp for $d=2$, numerical evidence suggests that this is not the case for $d>2$. It is not clear whether the explicit dependence on the dimension $d$ can be removed in both inequalities.
 
	A direct extension of the rank analysis from \cite{BDS:25} using \eqref{eq:stabilityE} as a high-dimensional substitute for \cite[Lemma 10]{BDS:25} unfortunately results in an error bound with respect to the exact solution scaling exponentially in the dimension as well as a higher-order polynomial dependence of the rank bounds on $E$. Additionally, we would like to improve the proportionality factor of the rank bounds from \cite[Theorem 27]{BDS:25}, which has the unfavorable scaling $\frac{1}{h^3}$ for small timesteps.

	These observations motivate the use of a different strategy to decrease the threshold $\alpha$ during the iterations as well as the development of a different argument for the rank control across subintervals, which we outline in this section. The global error and rank bounds presented in \Cref{prop:global_error_rank} do not exhibit an explicit dependence on $d$ (although an \emph{implicit} dependence may occur via other parameters) and the obtained global rank bound scales linearly in the number $N= \frac Th$ of subintervals. Moreover, we show that if the singular values of $u$ decay algebraically or exponentially, then the approximation ranks obtained can be bounded in terms of the optimal approximation ranks of $u$, up to a constant whose dependence on the dimension $d$ is only polynomial. This quasi-optimality result can be deduced from \Cref{prp:quasiopt} and \Cref{cor:quasiopt}.

	\subsection{Local error and rank bounds}
	
	We first derive local bounds on the ranks and the error associated to the intermediate quantities produced by our scheme. For this reason, we focus on a particular interval $\cI_n$ and suppress the dependence on $n$ for simplicity. Without restriction, we can also assume that the stopping index $I$ is greater than zero, since otherwise the produced iterate on the quadrature nodes would be equal to zero and thus the scheme \eqref{eq: next_int_eval} would just reproduce the initial data. In this case, the rank bounds become trivial.
	
	The following proposition is a key result of this subsection: it provides an estimate of the ranks of the intermediate quantities $\comp_{i,j}$ and of the error with respect to the exact solution $\exsol$ in terms of the singular value decay of $\exsol$.
	\begin{prop}\label[proposition]{prop:local_error_rank}
		Let $\normJ{\exsol-\fixedp}\leq \frac{1-\rho}{4(1+\rho)}\frac{1-\nu}{3-2\nu}\varepsilon$. Then, for all $i=1, 2, \dots,I$, for all $j=1,\dots,J_i$ and for all $\mat=1,2, \dots,E$, 
		\begin{align*}
			\rank_\mat{(\comp_{i,j})} &\leq \frac{128}{(1-\rho)^2}\frac{\normJ{\exsol-\softt_{\alpha_i}\exsol}^2}{\alpha_i^2} + \frac{32/(1-\nu)^2}{\theta^2(1-\rho)^2}\frac{\normJ{\exsol-\softt_{\alpha_{i-1}}\exsol}^2}{\alpha_{i-1}^2}+\rank_\mat(\softt_{\mat,\alpha_i/2}\exsol), \\
			\normJ{\comp_{i,j}-\exsol}&\leq\frac{(13-9\nu)/(3-2\nu)}{1-\rho}\left(\normJ{\exsol-\softt_{\alpha_i}\exsol} + \frac{1}{2(1-\nu)}\normJ{\exsol-\softt_{\alpha_{i-1}}\exsol}\right) \,.
		\end{align*}
		The above accuracy bound also holds for $j = 0$ and the same rank bound applies to $\rank_\mat(\comp_{i+1,0})$ for $i \geq 1$. Additionally, it holds that
		\begin{equation*}
			\normJ{\comp_{0,j}-\exsol}\leq\frac{(13-9\nu)/(3-2\nu)}{1-\rho} \normJ{\exsol-\softt_{\alpha_0}\exsol} \, ,
		\end{equation*}
		with $\comp_{1,0} = \comp_{0,1} = \comp_{0,0} = 0$.
		Moreover, the statement of Corollary \ref{cor:quasioptlocal} holds true with $\exsol$ in place of $\fixedp$, with modified constants depending on the same quantities.
	\end{prop}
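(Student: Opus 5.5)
The plan is to transfer the fixed-point-based estimates of Lemma~\ref{lem: bound iterates fixedp}, Lemma~\ref{lemma: Fv - hatv} and Proposition~\ref{prop: ranks iterates fixed p} from $\fixedp$ to $\exsol$. The assumption $\normJ{\exsol-\fixedp}\lesssim\varepsilon$ is used so that $\normJ{\exsol-\fixedp}$ can be absorbed into soft-thresholding errors of $\exsol$.

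\emph{Step 1: from $\fixedp$ to $\exsol$.} Since $\softt_\alpha$ is non-expansive by \eqref{eq:soft_thresh_nonexp}, the triangle inequality gives, for every $\alpha>0$,
\[
\normJ{\fixedp-\softt_\alpha\fixedp}\le \normJ{\exsol-\softt_\alpha\exsol}+2\normJ{\exsol-\fixedp}.
\]
I will then bound $\normJ{\exsol-\fixedp}$ by the soft-thresholding error of $\exsol$ at the previous threshold $\alpha_{i-1}$. Fix $1\le i\le I$. Because the outer loop was not stopped at index $i-1$, we have $\normJ{\fpmap\comp_{i-1,J_{i-1}}-\comp_{i-1,J_{i-1}}}\ge\varepsilon$. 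By \eqref{eq:resequiv} this gives $\normJ{\comp_{i-1,J_{i-1}}-\fixedp}\ge \varepsilon/(1+\rho)$. The proof of Lemma~\ref{lem: bound iterates fixedp} together with \eqref{eq:mod fp} gives
\[
\normJ{\comp_{i-1,J_{i-1}}-\fixedp}\le \frac{\normJ{\fixedp-\softt_{\alpha_{i-1}}\fixedp}}{(1-\nu)(1-\rho)}.
\]
Chaining these with Step~1 and the hypothesis $\normJ{\exsol-\fixedp}\le \frac{1-\rho}{4(1+\rho)}\frac{1-\nu}{3-2\nu}\varepsilon$ gives an inequality of the form $\varepsilon\le c\,\normJ{\exsol-\softt_{\alpha_{i-1}}\exsol}+c'\varepsilon$ with $c'<1$. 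Solving it yields $\normJ{\exsol-\fixedp}\le \gamma\normJ{\exsol-\softt_{\alpha_{i-1}}\exsol}$ with an explicit $\gamma$ depending on $\nu,\rho$. The hypothesis is presumably tuned exactly so that the resulting numbers produce the factors $(13-9\nu)/(3-2\nu)$ and $1/(2(1-\nu))$ in the statement. I expect this bookkeeping, arranging the absorption so that exactly these constants and the factor $4$ (that is, $128=4\cdot 32$) appear, to be the main obstacle. Conceptually the only delicate point is that the stopping criterion only guarantees a large residual at the \emph{previous} outer step. That is why the error of $\exsol$ at $\alpha_{i-1}$ must be used, rather than any monotonicity of $\alpha\mapsto\normJ{\exsol-\softt_\alpha\exsol}$, which is not available for hierarchical tensors.

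\emph{Step 2: accuracy bound.} Write $\normJ{\comp_{i,j}-\exsol}\le\normJ{\comp_{i,j}-\fixedp}+\normJ{\fixedp-\exsol}$. The first term is bounded by Lemma~\ref{lem: bound iterates fixedp} with $\rho^j\le1$, together with \eqref{eq:mod fp} for both thresholds. Then I insert Step~1 and the bound on $\normJ{\exsol-\fixedp}$, collecting everything as multiples of $\normJ{\exsol-\softt_{\alpha_i}\exsol}$ and $\normJ{\exsol-\softt_{\alpha_{i-1}}\exsol}$. This argument covers $j=0$ as well. For $i=0$ I use $\comp_{0,j}=0$, the identity $\modfp{\alpha_0}=0$ and the choice of $\alpha_0$, which makes $\softt_{\alpha_0}$ annihilate the relevant quantities. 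From this I get $\normJ{\fixedp}\le\frac{1}{1-\rho}\normJ{\fixedp-\softt_{\alpha_0}\fixedp}$, and then pass to $\exsol$ as above. Here the $\alpha_{-1}$ term does not exist, so $\normJ{\exsol-\fixedp}$ is instead controlled using the residual at $i=0$ (recall $I>0$).

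\emph{Step 3: rank bound.} I apply \cite[Lemma 4.3]{BS17} with reference $\exsol$ instead of $\fixedp$:
\[
\rank_\mat(\softt_{\alpha_i}\fpmap\comp_{i,j-1})\le 4\,\frac{\normJ{\exsol-\fpmap\comp_{i,j-1}}^2}{\alpha_i^2}+\rank_\mat(\softt_{\mat,\alpha_i/2}\exsol).
\]
Next I bound $\normJ{\exsol-\fpmap\comp_{i,j-1}}\le\normJ{\exsol-\fixedp}+\normJ{\fpmap\comp_{i,j-1}-\fixedp}$, using Lemma~\ref{lemma: Fv - hatv}, Step~1 and the estimate on $\normJ{\exsol-\fixedp}$. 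This produces $2\times$ the previous constants, whence $4\times$ after squaring. Finally $(a+b)^2\le2a^2+2b^2$ and $\alpha_i=\theta\alpha_{i-1}$ yield the stated form. The claims for $\comp_{i+1,0}=\comp_{i,J_i}$ and for $\comp_{1,0}=\comp_{0,1}=\comp_{0,0}=0$ follow exactly as in Proposition~\ref{prop: ranks iterates fixed p}.

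\emph{Step 4: quasi-optimality.} The Corollary~\ref{cor:quasioptlocal} analogue follows by inserting the decay assumptions on $\sigma_\mat(u_q)$ into \cite[Proposition 3.6]{BS17}, exactly as before, now applied to the right-hand sides just obtained.
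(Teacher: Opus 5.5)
Your proposal is correct and follows the paper's route. You transfer from $\fixedp$ to $\exsol$ via $\normJ{\fixedp-\softt_\alpha\fixedp}\le\normJ{\exsol-\softt_\alpha\exsol}+2\normJ{\exsol-\fixedp}$, absorb $\normJ{\exsol-\fixedp}$ using a non-stopping residual together with the hypothesis, and conclude with \cite[Lemma 4.3]{BS17}. The only difference is that you use the residual of $\comp_{i-1,J_{i-1}}=\comp_{i,0}$ with the sharper bound $(1-\nu)^{-1}\normJ{\fixedp-\modfp{\alpha_{i-1}}}$, which gives $\normJ{\exsol-\fixedp}\le\frac{1}{2(5-4\nu)}\normJ{\exsol-\softt_{\alpha_{i-1}}\exsol}$. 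This is dominated by the paper's intermediate bound $\frac{1-\nu}{3-2\nu}\normJ{\exsol-\softt_{\alpha_i}\exsol}+\frac{1}{2(3-2\nu)}\normJ{\exsol-\softt_{\alpha_{i-1}}\exsol}$, so the constant bookkeeping you flagged as the main obstacle does close, and the stated constants (including $128$ and $32$) follow with room to spare.
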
	
	We prove \Cref{prop:local_error_rank} by means of two auxiliary results.  In the first lemma, we derive a bound on the distance between the exact solution $\exsol$ and the intermediate quantities $\comp_{i,j}$ and $\fpmap\comp_{i,j}$.
	\begin{lemma}\label[lemma]{lem:lemma2}
		For $i=1,2,\dots,I$ and $j=0,1,\dots,J_i$, 
		\begin{equation*}
			\normJ{\fpmap\comp_{i,j}-\exsol} \leq C_j(\rho, \nu) \normJ{\exsol-\fixedp} + \rho D_j(\rho) \normJ{\exsol-\softt_{\alpha_i}\exsol} + \rho F_j (\rho, \nu) \normJ{\exsol-\softt_{\alpha_{i-1}}\exsol}
		\end{equation*}
		as well as
		\begin{equation*}
			\normJ{\comp_{i,j}-\exsol} \leq C_j^\prime(\rho, \nu) \normJ{\exsol-\fixedp} + D_j(\rho) \normJ{\exsol-\softt_{\alpha_i}\exsol} + F_j(\rho, \nu) \normJ{\exsol-\softt_{\alpha_{i-1}}\exsol},
		\end{equation*}
		where the constants are given by
		\[
		\begin{gathered}
			C_{j}(\rho, \nu) = \frac{(1-\nu)(1+\rho)+2\rho^{j+1}(2-\nu)}{(1-\nu)(1-\rho)}, \qquad C^\prime_{j}(\rho, \nu) = \frac{(1-\nu)(3-\rho)+2\rho^j(2-\nu)}
			{(1-\nu)(1-\rho)}, \\
			D_{j}(\rho) = \frac{1+\rho^j}{1-\rho}, \qquad F_{j}(\rho, \nu) = \frac{\rho^j}{(1-\nu)(1-\rho)}.
		\end{gathered}
		\]
	\end{lemma}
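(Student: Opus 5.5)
The plan is to route everything through the fixed point $\fixedp$. We already control $\comp_{i,j}-\fixedp$ by Lemma~\ref{lem: bound iterates fixedp}, and the triangle inequality together with the non-expansiveness \eqref{eq:soft_thresh_nonexp} lets us replace soft-thresholding errors of $\fixedp$ by those of $\exsol$ at the price of multiples of $\normJ{\exsol-\fixedp}$.

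\textbf{Step 1.} Start from Lemma~\ref{lem: bound iterates fixedp}:
\[
\normJ{\comp_{i,j}-\fixedp}\le (1+\rho^j)\normJ{\fixedp-\modfp{\alpha_i}}+\tfrac{\rho^j}{1-\nu}\normJ{\fixedp-\modfp{\alpha_{i-1}}}.
\]
Then use \eqref{eq:mod fp}, i.e.\ $\normJ{\fixedp-\modfp{\alpha}}\le (1-\rho)^{-1}\normJ{\fixedp-\softt_\alpha\fixedp}$.

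\textbf{Step 2.} Insert $\exsol$ and $\softt_\alpha\exsol$:
\[
\normJ{\fixedp-\softt_\alpha\fixedp}\le \normJ{\fixedp-\exsol}+\normJ{\exsol-\softt_\alpha\exsol}+\normJ{\softt_\alpha\exsol-\softt_\alpha\fixedp}\le 2\normJ{\exsol-\fixedp}+\normJ{\exsol-\softt_\alpha\exsol}.
\]
Here the last inequality uses \eqref{eq:soft_thresh_nonexp} componentwise. Apply this with $\alpha=\alpha_i$ and $\alpha=\alpha_{i-1}$. This yields
\[
\normJ{\comp_{i,j}-\fixedp}\le \Bigl(\tfrac{2(1+\rho^j)}{1-\rho}+\tfrac{2\rho^j}{(1-\nu)(1-\rho)}\Bigr)\normJ{\exsol-\fixedp}+D_j\normJ{\exsol-\softt_{\alpha_i}\exsol}+F_j\normJ{\exsol-\softt_{\alpha_{i-1}}\exsol}.
\]

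\textbf{Step 3.} For the second claim, write $\normJ{\comp_{i,j}-\exsol}\le\normJ{\exsol-\fixedp}+\normJ{\comp_{i,j}-\fixedp}$. Collecting over the common denominator $(1-\nu)(1-\rho)$, the coefficient of $\normJ{\exsol-\fixedp}$ has numerator
\[
(1-\nu)(1-\rho)+2(1-\nu)(1+\rho^j)+2\rho^j=(1-\nu)(3-\rho)+2\rho^j(2-\nu),
\]
which is exactly $C_j'$.

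\textbf{Step 4.} For the first claim, use $\fixedp=\fpmap\fixedp$ and the contraction property \eqref{eq:normQ_vs_normW}, \eqref{eq:lipsch}:
\[
\normJ{\fpmap\comp_{i,j}-\exsol}\le\normJ{\exsol-\fixedp}+\rho\normJ{\comp_{i,j}-\fixedp}.
\]
Multiplying the bound of Step 2 by $\rho$ produces the factors $\rho D_j$ and $\rho F_j$. The coefficient of $\normJ{\exsol-\fixedp}$ has numerator
\[
(1-\nu)(1-\rho)+2\rho(1-\nu)(1+\rho^j)+2\rho^{j+1}=(1-\nu)(1+\rho)+2\rho^{j+1}(2-\nu),
\]
which is $C_j$.

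There is no real obstacle here. The only points requiring care are that $i\ge1$ is needed so that Lemma~\ref{lem: bound iterates fixedp} applies with both thresholds $\alpha_i$ and $\alpha_{i-1}$, and that the case $j=0$ is covered by that lemma as well. The remaining work is bookkeeping of the constants to match $C_j$ and $C_j'$ exactly.
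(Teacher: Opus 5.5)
Your proposal is correct and follows essentially the same route as the paper. You insert $\fixedp$ via the triangle inequality, use $\normJ{\fpmap\comp_{i,j}-\fixedp}\le\rho\normJ{\comp_{i,j}-\fixedp}$ together with Lemma~\ref{lem: bound iterates fixedp} and \eqref{eq:mod fp} (which is exactly how Lemma~\ref{lemma: Fv - hatv} is derived), and replace the thresholding errors of $\fixedp$ by those of $\exsol$ at the cost of $2\normJ{\exsol-\fixedp}$ via non-expansiveness; your bookkeeping for $C_j$ and $C_j'$ checks out.
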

	
	\begin{proof}
		Using the triangle inequality  $\normJ{\fpmap\comp_{i,j}-\exsol} \leq \normJ{\fpmap\comp_{i,j}-\fixedp} + \normJ{\fixedp-\exsol}$,
		applying \Cref{lemma: Fv - hatv} to the first term on the right and noticing that
		\begin{equation}\label{eq:softt_fixedp_vs_exact}
			\normJ{\fixedp-\softt_{\alpha}\fixedp} \leq \normJ{\exsol-\softt_{\alpha}\exsol} + 2\normJ{\exsol-\fixedp},
		\end{equation}
		for any value of the threshold $\alpha$, we obtain the first result by rearranging terms. The bound on $\normJ{\comp_{i,j}-\exsol}$ can be obtained analogously, starting from \Cref{lem: bound iterates fixedp} and using \eqref{eq:mod fp}.
	\end{proof}
	
	Next we show that, as long as the stopping tolerance $\varepsilon$ is sufficiently large compared to $\normJ{\exsol-\fixedp}$, that is, the deviation of the fixed-point from the exact solution on the current interval, we can obtain error bounds that only depend on the thresholding error associated with $\exsol$.
	
	\begin{lemma}\label[lemma]{lem:lemma3}
		If $\normJ{\exsol-\fixedp}\leq \frac{1-\rho}{4(1+\rho)}\frac{1-\nu}{3-2\nu}\varepsilon$, then for $i=1,2, \dots,I$ and $j=0,1,\dots,J_i$,
		\begin{align*}
			&\normJ{\fpmap\comp_{i,j}-\exsol}\leq\frac{4}{1-\rho}\left(\normJ{\exsol-\softt_{\alpha_i}\exsol} + \frac{1}{2(1-\nu)}\normJ{\exsol-\softt_{\alpha_{i-1}}\exsol}\right), \\
			&\normJ{\comp_{i,j}-\exsol}\leq\frac{(13-9\nu)/(3-2\nu)}{1-\rho}\left(\normJ{\exsol-\softt_{\alpha_i}\exsol} + \frac{1}{2(1-\nu)}\normJ{\exsol-\softt_{\alpha_{i-1}}\exsol}\right).
		\end{align*}
	\end{lemma}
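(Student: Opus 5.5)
The plan is to absorb the term $\normJ{\exsol-\fixedp}$ in \Cref{lem:lemma2} into the soft-thresholding errors of $\exsol$. This works because, for $i\le I$ and $j\le J_i$, the iteration has not yet stopped, so the residual stays large compared to $\varepsilon$ until termination. First I will establish a lower bound on the thresholding terms. For $i\le I$, the outer loop was entered at stage $i-1$ (for $i\ge1$) because the stopping test failed at $\comp_{i-1,J_{i-1}}=\comp_{i,0}$. This gives $\normJ{\fpmap\comp_{i,0}-\comp_{i,0}}\ge\varepsilon$, and by \eqref{eq:resequiv} also $\normJ{\comp_{i,0}-\fixedp}\ge\varepsilon/(1+\rho)$.

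Next I combine this with the bound from the proof of \Cref{lem: bound iterates fixedp}, namely $\normJ{\comp_{i-1,J_{i-1}}-\fixedp}\le(1-\nu)^{-1}\normJ{\fixedp-\modfp{\alpha_{i-1}}}$, and with \eqref{eq:mod fp}. This yields
\[
\frac{\varepsilon}{1+\rho}\le \frac{1}{(1-\nu)(1-\rho)}\normJ{\fixedp-\softt_{\alpha_{i-1}}\fixedp}.
\]
Applying \eqref{eq:softt_fixedp_vs_exact} then gives $\varepsilon\le \frac{1+\rho}{(1-\nu)(1-\rho)}\bigl(\normJ{\exsol-\softt_{\alpha_{i-1}}\exsol}+2\normJ{\exsol-\fixedp}\bigr)$.

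Inserting the hypothesis $\normJ{\exsol-\fixedp}\le \frac{1-\rho}{4(1+\rho)}\frac{1-\nu}{3-2\nu}\varepsilon$ makes the $2\normJ{\exsol-\fixedp}$ term at most a fixed fraction $\frac{1}{2(3-2\nu)}$ of $\varepsilon$, which can be absorbed on the left. This leaves $\varepsilon\lesssim \frac{1+\rho}{(1-\nu)(1-\rho)}\normJ{\exsol-\softt_{\alpha_{i-1}}\exsol}$, and consequently
\[
\normJ{\exsol-\fixedp}\le c(\nu)\,\normJ{\exsol-\softt_{\alpha_{i-1}}\exsol},
\]
where $c(\nu)$ is an explicit constant of order $\frac{1}{4(3-2\nu)}\cdot\frac{\dots}{\dots}$ from the absorption step. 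I would also try the alternative of using $\normJ{\fpmap\comp_{i,j}-\comp_{i,j}}$ directly in \Cref{lemma: Fv - hatv}, in case that gives cleaner constants. I expect the stated form to come from using the version with the factor $\frac{1}{2(1-\nu)}$.

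Finally I will substitute this bound into both estimates of \Cref{lem:lemma2}, using $\rho^j\le1$ and $\rho<1$ to bound $C_j\le \frac{(1-\nu)(1+\rho)+2\rho(2-\nu)}{(1-\nu)(1-\rho)}$, $C_j'\le\frac{(1-\nu)(3-\rho)+2(2-\nu)}{(1-\nu)(1-\rho)}$, $D_j\le 2/(1-\rho)$ and $F_j\le 1/((1-\nu)(1-\rho))$. Collecting the coefficients of $\normJ{\exsol-\softt_{\alpha_i}\exsol}$ and $\normJ{\exsol-\softt_{\alpha_{i-1}}\exsol}$ should produce the prefactors $4/(1-\rho)$ and $(13-9\nu)/((3-2\nu)(1-\rho))$. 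The main obstacle is this bookkeeping. It has to be checked that, with the specific constant $\frac{1-\rho}{4(1+\rho)}\frac{1-\nu}{3-2\nu}$, the contribution $C_j\normJ{\exsol-\fixedp}$ fits under $\frac{4}{1-\rho}\cdot\frac{1}{2(1-\nu)}\normJ{\exsol-\softt_{\alpha_{i-1}}\exsol}$ minus the $\rho F_j$ part, and similarly for $C_j'$. If the constants do not match exactly, I would use the slack in $\rho D_j\le 2\rho/(1-\rho)\le 2/(1-\rho)$ to redistribute.
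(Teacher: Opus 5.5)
Your route is correct. It follows the same overall scheme as the paper: lower-bound $\varepsilon$ through an iterate that fails the stopping test, absorb $\normJ{\exsol-\fixedp}$ using the hypothesis, then insert into \Cref{lem:lemma2}. It differs in the middle step, and that step is sharper.

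The paper bounds $\normJ{\comp_{i,j}-\fixedp}$ by the statement of \Cref{lem: bound iterates fixedp} with $\rho^j\le 1$. This gives \eqref{eq:bound_en_by_unvn} with both thresholds, and hence $\normJ{\exsol-\fixedp}\le\frac{1-\nu}{3-2\nu}\normJ{\exsol-\softt_{\alpha_i}\exsol}+\frac{1}{2(3-2\nu)}\normJ{\exsol-\softt_{\alpha_{i-1}}\exsol}$. The constants $4$ and $(13-9\nu)/(3-2\nu)$ in the lemma are exactly what this produces.

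You instead use the specific iterate $\comp_{i,0}=\comp_{i-1,J_{i-1}}$ together with the a posteriori bound $\normJ{\comp_{i-1,J_{i-1}}-\fixedp}\le(1-\nu)^{-1}\normJ{\fixedp-\modfp{\alpha_{i-1}}}$ from inside that proof. This avoids the lossy triangle inequality through $\modfp{\alpha_i}$. Carrying out your absorption gives $c(\nu)=\frac{1}{2(5-4\nu)}$, i.e.\ $\normJ{\exsol-\fixedp}\le\frac{1}{2(5-4\nu)}\normJ{\exsol-\softt_{\alpha_{i-1}}\exsol}$. That bound has no $\alpha_i$-term and a smaller coefficient than the paper's.

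The bookkeeping you left open closes with room to spare and needs no redistribution. With your bounds on $C_j,C_j',D_j,F_j$:
\begin{itemize}
\item The coefficients of $\normJ{\exsol-\softt_{\alpha_{i-1}}\exsol}$ are at most $\frac{8-6\nu}{(5-4\nu)(1-\nu)(1-\rho)}$ and $\frac{17-13\nu}{2(5-4\nu)(1-\nu)(1-\rho)}$. Comparing with the targets reduces to $\nu\le 1$ and $(13-9\nu)(5-4\nu)-(17-13\nu)(3-2\nu)=2(1-\nu)(7-5\nu)\ge 0$.
\item The $\alpha_i$-coefficients only need $2\rho\le 4$ and $5\nu\le 7$.
\end{itemize}
Your fallback of shifting slack from the $\alpha_i$-coefficient to the $\alpha_{i-1}$-coefficient would not have been free anyway. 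It requires $\normJ{\exsol-\softt_{\alpha_{i-1}}\exsol}\lesssim\normJ{\exsol-\softt_{\alpha_i}\exsol}$, which for hierarchical soft thresholding is only available with a factor $E/\theta$ via \eqref{eq:stabilityE}.

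So your version proves a slightly stronger lemma, and your resulting bound $\varepsilon\lesssim\normJ{\exsol-\softt_{\alpha_{i-1}}\exsol}$ could serve equally well where the paper reuses \eqref{eq:bound_en_by_unvn}. The paper's version, in exchange, uses \Cref{lem: bound iterates fixedp} only as stated, without reopening its proof. Its two-threshold bound \eqref{eq:bound_en_by_unvn} is what the paper reuses in the remark after \Cref{prop:global_error_rank} and in the proof of \Cref{prp:quasiopt}.
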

	
	\begin{proof}
		For a fixed $i=0,1,\dots,I$ with $I \geq 1$, choose $j$ so that $\comp_{i,j}$ does not satisfy the stopping criterion. %
		Since $\normJ{\comp_{i,j}-\fpmap\comp_{i,j}} \leq (1+\rho)\normJ{\comp_{i,j}-\fixedp}$, it then holds that
		\begin{align}
			\begin{split}\label{eq:bound_en_by_unvn}
				\varepsilon & \leq 2(1+\rho)\normJ{\fixedp-\modfp{\alpha_i}} + \frac{1+\rho}{1-\nu}\normJ{\fixedp-\modfp{\alpha_{i-1}}} \\ &\leq 2\frac{3-2\nu}{1-\nu}\frac{1+\rho}{1-\rho}\normJ{\exsol-\fixedp} + 2\frac{1+\rho}{1-\rho}\normJ{\exsol-\softt_{\alpha_i}\exsol} + \frac{1+\rho}{(1-\nu)(1-\rho)}\normJ{\exsol-\softt_{\alpha_{i-1}}\exsol},
			\end{split}
		\end{align}
		where we used \Cref{lem: bound iterates fixedp} together with the fact that $\rho^j\leq1$ for all $j\geq0$ in the third inequality, and \eqref{eq:mod fp} combined with \eqref{eq:softt_fixedp_vs_exact} in the fourth inequality. Since we can bound $\varepsilon$ from below in terms of $\normJ{\exsol-\fixedp}$, we find that
		\begin{equation*}
			\normJ{\exsol-\fixedp}\leq\frac{1-\nu}{3-2\nu}\normJ{\exsol-\softt_{\alpha_i}\exsol} + \frac{1}{2(3-2\nu)}\normJ{\exsol-\softt_{\alpha_{i-1}}\exsol},
		\end{equation*}
		for all $i=1,2, \dots, I$. Finally, inserting this into \Cref{lem:lemma2} we obtain
		\begin{align*}
			&\normJ{\fpmap\comp_{i,j}-\exsol}\leq\frac{(11-7\nu)\rho+1-\nu}{(1-\rho)(3-2\nu)}\left(\normJ{\exsol-\softt_{\alpha_i}\exsol} + \frac{1}{2(1-\nu)}\normJ{\exsol-\softt_{\alpha_{i-1}}\exsol}\right), \\
			&\normJ{\comp_{i,j}-\exsol}\leq\frac{(\nu-1)\rho+13-9\nu}{(1-\rho)(3-2\nu)}\left(\normJ{\exsol-\softt_{\alpha_i}\exsol} + \frac{1}{2(1-\nu)}\normJ{\exsol-\softt_{\alpha_{i-1}}\exsol}\right).
		\end{align*}
		The final statement follows from the fact that $(11-7\nu)\rho+1-\nu \leq 12-8\nu$ and $(\nu-1)\rho+13-9\nu \leq 13-9\nu$ for all $\rho\in(0,1)$.
	\end{proof}

	We are now in a position to prove \Cref{prop:local_error_rank}.
	\begin{proof}[Proof of \Cref{prop:local_error_rank}]
		The error bound is given by \Cref{lem:lemma3} for $i \geq 1$ and $j \geq 0$. In order to obtain the rank bound for $i \geq 1$, $j \geq 1$, we first apply \cite[Lemma 4.3]{BS17} to get
		\begin{equation*}
			\rank_\mat(\comp_{i,j})=\rank_\mat(\softt_{\alpha_i}\fpmap\comp_{i,j-1}) \leq 4\frac{\normJ{\exsol-\fpmap\comp_{i,j-1}}^2}{\alpha_i^2} + \rank_\mat(\softt_{t,\alpha_i/2}\exsol)
		\end{equation*}
		and then apply \Cref{lem:lemma3} to $\normJ{\exsol-\fpmap\comp_{i,j-1}}$. All remaining special cases can be treated analogously to the previous section.
	\end{proof}
	
	\subsection{Global error and rank bounds}
	We next show how the local bounds obtained in the previous section can be extended to global bounds in the whole time interval $[0,T]$, where we use the notation introduced in Section \ref{sec:globaliter}. We also define $\albar_n:=\alpha_{I_n,n}$ and $\altil_n:=\alpha_{I_n-1,n} = \albar_{n}/\theta$. Recall that here we still assume $I_n \geq 1$ without restriction.
	
    For the global error analysis below, we set $\delta_n=0$, so that no recompression is performed at the time-step endpoints. This choice allows us to track the propagation of errors and ranks without introducing additional recompression terms. In practice, taking $\delta_n>0$ may substantially reduce the endpoint ranks, at the expense of an additional approximation error. The following proposition provides an error bound at the discrete time instants $t_n$, for $n=1,2,\dots,N$, as well as a bound on the rank of the approximations $\initsol_n$.
	
	\begin{prop}\label[proposition]{prop:global_error_rank}
		For all $n=1,2,\dots,N$, set
		\begin{equation}\label{eq:epsilon_n}
			\varepsilon_n = \frac{4(1+\rho)}{(1-\rho)^2}\frac{3-2\nu}{1-\nu}\left[\kappa_\nqp h^{\nqp+1} + \kappa_{2\nqp}h^{2\nqp+1}(n-1) + \eta_0 + \frac{\rho}{1-\rho}\sum_{k=1}^{n-1}\varepsilon_k\right]. %
		\end{equation}
		With this choice, the scheme with Gauss-Legendre points achieves a global error bounded by
		\begin{equation*}
			\norm{\initsol_n-u(t_n)} \leq \left(\eta_0 + \kappa_\nqp h^{\nqp+1} + \frac{(1-\rho)^3}{4C_G\Lambda_\nqp(1+\rho)}\frac{1-\nu}{3-2\nu}\kappa_{2\nqp}h^{2\nqp}\right)\exp\left(\frac{4C_G\Lambda_Q(1+\rho)}{(1-\rho)^3}\frac{3-2\nu}{1-\nu}t_n\right).
		\end{equation*}
		Moreover, for all $\mat=1,2,\dots,E$, the ranks of the scheme are bounded by
		\begin{multline*}
			\rank_\mat(\initsol_n) \leq \rank_\mat(\initsol_0) + \nqp\rank_\mat(V)\sum_{k=1}^n\Bigg(\frac{128}{(1-\rho)^2}\frac{\normJ{\exsoln{k}-\softt_{\albar_k}\exsoln{k}}^2}{\albar_k^2}  \\ + \frac{32/(1-\nu)^2}{\theta^2(1-\rho)^2}\frac{\normJ{\exsoln{k}-\softt_{\altil_k}\exsoln{k}}^2}{\altil_k^2} + \rank_\mat\Big(\softt_{\mat,\albar_k/2}\exsoln{k}\Big)\Bigg).
		\end{multline*}
	\end{prop}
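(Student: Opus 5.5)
The plan is to argue by induction on $n$ and prove the error bound and the hypothesis of \Cref{prop:local_error_rank} together. Write $e_n := \norm{\initsol_n - u(t_n)}$. Since $\delta_n = 0$, we have $\initsol_n = \cfpmap_n\compn{n}_{I_n,J_{I_n}}(t_n)$.

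\textbf{Error recursion.} The proof of \Cref{prop:error_bound_1} (with $\delta = 0$) gives the one-step recursion. Its three ingredients are \Cref{lem:iso_pres}, \Cref{lem:local_bound}, and the estimate $\norm{\cfpmap_n\compn{n}_{I_n,J_{I_n}}(t_n) - \hat u_n} \le \frac{\rho\varepsilon_n}{1-\rho}$ obtained in the proof of the theorem in the previous section. The recursion is
\[
e_n \le e_{n-1} + \kappa_{2\nqp}h^{2\nqp+1} + \tfrac{\rho}{1-\rho}\varepsilon_n .
\]
Unrolling it gives
\[
e_{n-1} \le \eta_0 + \kappa_{2\nqp}h^{2\nqp+1}(n-1) + \tfrac{\rho}{1-\rho}\sum_{k=1}^{n-1}\varepsilon_k .
\]

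\textbf{Hypothesis of \Cref{prop:local_error_rank}.} The components of $\fixedpn{n}$ are $\cfpmap_n\fixedpn{n}(\tau_{j,n})$. Applying \Cref{lem:local_inside} on $\cI_n$ with $v_0 = \initsol_{n-1}$ therefore gives
\[
\normJ{\exsoln{n} - \fixedpn{n}} \le \frac{\kappa_\nqp h^{\nqp+1} + e_{n-1}}{1-\rho}.
\]
Inserting the bound on $e_{n-1}$ yields exactly $\frac{1-\rho}{4(1+\rho)}\frac{1-\nu}{3-2\nu}\varepsilon_n$, by the definition \eqref{eq:epsilon_n}. This is precisely why $\varepsilon_n$ was chosen in that form. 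Hence \Cref{prop:local_error_rank} applies on every subinterval.

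\textbf{Closing the error bound.} Set $A := \frac{4(1+\rho)}{(1-\rho)^2}\frac{3-2\nu}{1-\nu}$. Taking differences in \eqref{eq:epsilon_n} gives
\[
\varepsilon_n = \bigl(1 + \tfrac{A\rho}{1-\rho}\bigr)\varepsilon_{n-1} + A\kappa_{2\nqp}h^{2\nqp+1}.
\]
A discrete Gronwall argument (geometric sum, then $1 + x \le e^x$) bounds $\sum_k \varepsilon_k$, and hence $e_n$, by
\[
\Bigl(\eta_0 + \kappa_\nqp h^{\nqp+1} + \tfrac{(1-\rho)\kappa_{2\nqp}h^{2\nqp+1}}{A\rho}\Bigr)\exp\Bigl(\tfrac{A\rho}{1-\rho}\, n\Bigr).
\]
Substituting $\rho = C_G\Lambda_\nqp h$ gives
\[
\frac{A\rho}{1-\rho}\,n = \frac{4C_G\Lambda_\nqp(1+\rho)}{(1-\rho)^3}\frac{3-2\nu}{1-\nu}\,t_n ,
\qquad
\frac{(1-\rho)h^{2\nqp+1}}{A\rho} = \frac{(1-\rho)^3(1-\nu)}{4C_G\Lambda_\nqp(1+\rho)(3-2\nu)}\,h^{2\nqp},
\]
which matches the stated constants. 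The main obstacle is this bookkeeping: one must check that the resulting expression dominates $e_n$ directly, not only $\varepsilon_n$, including the base case $n=1$.

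\textbf{Rank bound.} We have $\initsol_n = \initsol_{n-1} + \sum_{q=1}^\nqp \omega_q G_{\tau_q}(v_q)$ with $\mathbf v = \compn{n}_{I_n,J_{I_n}}$. Since $e^{\pm i\tau\Delta}$ preserves hierarchical ranks and $\rank_\mu(V_t w) \le \rank(V)\rank_\mu(w)$, subadditivity of $\rank_\mu$ under sums gives
\[
\rank_\mu(\initsol_n) \le \rank_\mu(\initsol_{n-1}) + \nqp\,\rank(V)\,\rank_\mu(\mathbf v).
\]
Here $\rank_\mu(\mathbf v)$ is bounded by \Cref{prop:local_error_rank} with $i = I_n \ge 1$, $\alpha_{I_n} = \albar_n$ and $\alpha_{I_n-1} = \altil_n$. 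Summing this recursion over $k = 1,\dots,n$ gives the claimed bound.
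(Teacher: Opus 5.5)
Your proposal is correct and follows the paper's proof essentially step by step. It unrolls the one-step bound of the local error proposition (with $\delta_n=0$), uses the interior local error lemma to verify the hypothesis of the local rank proposition (which is exactly what the choice of $\varepsilon_n$ is designed for), and sums the per-step increment $\rank_\mu(\initsol_n)\le\rank_\mu(\initsol_{n-1})+\nqp\,\rank(V)\,\rank_\mu(\mathbf{v})$ over $k$. The Gronwall bookkeeping you flag does close: the unrolled recursion gives $e_n\le\varepsilon_{n+1}/A$ (with $\varepsilon_{n+1}$ defined by the same formula), so your linear recursion yields $e_n\le q^n(\eta_0+\kappa_\nqp h^{\nqp+1})+\kappa_{2\nqp}h^{2\nqp+1}(q^n-1)/(q-1)$ with $q=1+A\rho/(1-\rho)$, and this is dominated by the stated bound for every $n\ge1$.
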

	\begin{remark}
		Equation \eqref{eq:epsilon_n} reveals that the value of the stopping tolerance $\varepsilon_n$ increases exponentially in $n$. This translates to the exponential dependence of the error bound on the final time. However, the exponent does not depend on the number of time steps nor on the dimension $d$, and in fact this error bound turns out to be rather pessimistic in practice.
	\end{remark}
	
	\begin{proof}
		Inductive application of \Cref{prop:error_bound_1} yields
		\begin{equation}\label{eq:errbound_with_epsk}
			\norm{\initsol_n-u(t_n)} \leq \eta_0 +\kappa_{2\nqp}h^{2\nqp+1}n  + \frac{\rho}{1-\rho}\sum_{k=1}^n\varepsilon_k.
		\end{equation}
		Choosing $\varepsilon_k$ as prescribed and using the inequality $1+x\leq e^x$, which holds for any $x$, we obtain the error bound.
		
		For the rank bound, using \Cref{lem:local_inside} and \Cref{prop:error_bound_1} we get
		\begin{align}
			\begin{split}\label{eq:bound_vnun_by_en}
				\normJ{\fixedpn{n}-\exsoln{n}}&\leq\frac{\kappa_\nqp h^{\nqp+1}+\norm{\initsol_{n-1}-u(t_{n-1})}}{1-\rho} \\
				&\leq \frac1{1-\rho}\Bigl( \kappa_\nqp h^{\nqp+1}+\eta_0+\frac{\rho}{1-\rho}\sum_{k=1}^{n-1}\varepsilon_k+\kappa_{2\nqp}h^{2\nqp+1}(n-1) \Bigr) \\
				&= \frac{1-\rho}{4(1+\rho)}\frac{1-\nu}{3-2\nu}\varepsilon_n.
			\end{split}
		\end{align}
		Then applying \Cref{prop:local_error_rank} in the subinterval $\timeint_n$ we find that $\rank_\mat{(\compn{n}_{i,j})} \leq \overline{r}^{(n)}_{i}$, where
		\begin{equation*}
			\overline{r}^{(n)}_{i} := \frac{128}{(1-\rho)^2}\frac{\normJ{\exsoln{n}-\softt_{\alpha_{i,n}}\exsoln{n}}^2}{\alpha_{i,n}^2} + \frac{32/(1-\nu)^2}{\theta^2(1-\rho)^2}\frac{\normJ{\exsoln{n}-\softt_{\alpha_{i-1,n}}\exsoln{n}}^2}{\alpha_{i-1,n}^2} +\rank_\mat \Big(\softt_{\mat,\alpha_{i,n}/2}\exsoln{n} \Big)
		\end{equation*}
		We next recall that $\initsol_n=\cfpmap_n\compn{n}_{I_n,J_{I_n}}(nh)$. Then, for all $\mu=1,2,\dots,E$,
		\begin{align*}
			\rank_\mat(\initsol_n) &= \rank_\mat\Big(\cfpmap_n\compn{n}_{I_n,J_{I_n}}(nh)\Big) \leq \rank_\mat(\initsol_{n-1}) + \nqp\rank_\mat(V)\rank_\mat\Big(\compn{n}_{I_n,J_{I_n}}\Big) \\&\leq \rank_\mat(\initsol_{n-1}) + \nqp\rank_\mat(V)\overline{r}^{(n)}_{I_n},
		\end{align*}
		where we used \cite[Lemma 22]{BDS:25} in the first inequality. Iterating this process we obtain
		\begin{equation*}
			\rank_\mat(\initsol_n) \leq \rank_\mat(\initsol_{0}) + \nqp\rank_\mat(V)\sum_{k=1}^n\overline{r}^{(k)}_{I_k}
		\end{equation*}
		and inserting the definition of $\overline{r}^{(k)}_{I_k}$ yields the final result.
	\end{proof}
	
	\begin{remark}
		Combining \eqref{eq:bound_vnun_by_en} and \eqref{eq:bound_en_by_unvn} in the subinterval $\mathcal{I}_n$, we can derive an upper bound for the stopping tolerance $\varepsilon_n$ in terms of the soft-thresholding error of the exact solution: for $i=0,1,\dots,I_n$ and all $n$, it holds that
		\begin{equation*}
			\varepsilon_n \leq 4\frac{1+\rho}{1-\rho}\normJ{\exsoln{n}-\softt_{\alpha_{i,n}}\exsoln{n}} + \frac{2(1+\rho)}{(1-\nu)(1-\rho)}\normJ{\exsoln{n}-\softt_{\alpha_{i-1,n}}\exsoln{n}} \,.
		\end{equation*}
		Inserting this into \eqref{eq:errbound_with_epsk} and choosing $i=I_n$ we obtain the alternative error bound
		\begin{equation*}
			\norm{\initsol_n-u(t_n)}\leq\eta_0 + \kappa_{2\nqp}h^{2\nqp}t_n+\frac{2\rho(1+\rho)}{(1-\rho)^2}\sum_{k=1}^n\left(2\normJ{\exsoln{k}-\softt_{\albar_{k}}\exsoln{k}} + \frac{\normJ{\exsoln{k}-\softt_{\altil_{k}}\exsoln{k}}}{1-\nu}\right).
		\end{equation*}
		
	\end{remark}
	
	\subsection{Quasi-optimality of rank estimates}
	We now show that the rank bounds obtained in the previous Sections are quasi-optimal in the sense defined in Section~\ref{sec:accuracy_bounds}. Our reference is the rank of the exact solution $\exsol=(u_q)_{q=1,\dots,\nqp}$ defined, for $\mu=1,2,\dots,E$ and $\alpha>0$, by
	\begin{equation*}
		r_{\mu,\alpha}(\exsol) := \max_{q = 1,\ldots, Q} \# \{k \in\mathbb{N} : \sigma_{\mu,k}(u_q)>\alpha \}.
	\end{equation*}	
	It is well known (see, for example, \cite[Prop.~3.6]{BS17} and the references given there) that, in the case of algebraic and exponential decay of the singular values of $\exsol$, with the notation introduced in \Cref{subsec:sv_decay}, we have
	\begin{equation}\label{eq:ranks_decay}
		r_{\mu,\alpha}(\exsol)\lesssim M(\exsol)^p\alpha^{-p} \qquad \text{ and } \qquad r_{\mu,\alpha}(\exsol)\lesssim(1+\lvert\ln\alpha\rvert)^{\frac{1}{\beta}},
	\end{equation}
	respectively, where $M(\exsol):=\max_\mu\max_q\lvert\sigma_\mu(u_q)\rvert_{\ell^{p,\infty}}$ and the constants depend only on $p$ and only on $C$, $c$ and $\beta$, respectively. Therefore, our goal is to show that whenever the growth of best approximation ranks is bounded in such a form, analogous estimates hold for the ranks of the approximations produced by our scheme, with different constants depending on the same quantities and, possibly, on low-order polynomial functions of the dimension.
	
	\begin{prop}
	\label{prp:quasiopt}
	For $n=1,2,\dots,N$ and $\mu = 1,2,\ldots, E$, we have $\max_{i,j} \rank_\mu(\mathbf v_{i,j}^{(n)}) \lesssim R(E, n)$, where 	
	\begin{equation*}
		{R}(E,n)=\begin{cases}
			E^{2+\frac{2}{s}} M(\exsoln{n})^{\frac{1}{s}}\varepsilon_n^{-\frac{1}{s}} & \text{if $\sigma_\mat(\exsoln{n})\in\ell^{p,\infty}$, $s = \frac1p - \frac12$,} \\
			E^2(1+\lvert\ln E\rvert)^{\frac{1}{\beta}} (1+\lvert\ln\varepsilon_n\rvert)^{\frac{1}{\beta}} & \text{if $\sigma_{\mat,j}(\exsoln{n})\leq Ce^{-cj^\beta}$ for all $j$.}
		\end{cases}
	\end{equation*}
	\end{prop}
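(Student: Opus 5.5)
We start from the rank bound of \Cref{prop:local_error_rank}, which for all $i\le I_n$, $j$ gives
\[
\rank_\mu(\compn{n}_{i,j}) \lesssim \frac{\normJ{\exsoln{n}-\softt_{\alpha_{i,n}}\exsoln{n}}^2}{\alpha_{i,n}^2} + \frac{\normJ{\exsoln{n}-\softt_{\alpha_{i-1,n}}\exsoln{n}}^2}{\alpha_{i-1,n}^2} + r_{\mu,\alpha_{i,n}/2}(\exsoln{n}),
\]
using that $\rank_\mu(\softt_{\mu,\alpha/2} u_q)=\#\{k:\sigma_{\mu,k}(u_q)>\alpha/2\}$. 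Since each term is monotonically nonincreasing in $\alpha$, the maximum over $i,j$ is controlled by the smallest threshold reached, $\albar_n=\alpha_{I_n,n}$ (and $\altil_n=\albar_n/\theta$). So the task splits into two steps: (a) bound the thresholding errors $\normJ{\exsol-\softt_\alpha\exsol}$ and $r_{\mu,\alpha}$ in terms of $\alpha$ under the decay assumptions, and (b) bound $\albar_n$ from below in terms of $\varepsilon_n$.

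For (a) we use \cite[Prop.~3.6]{BS17} applied matricization-wise together with the telescoping estimate $\norm{v-\softt_\alpha v}\le\sum_{\mu=1}^E\norm{\cM_\mu(w_{\mu-1})-S_\alpha\cM_\mu(w_{\mu-1})}$, where $w_\mu$ are the partially thresholded tensors. Non-expansiveness lets us compare each summand with the thresholding of $v$ itself. This gives, in the algebraic case, $\normJ{\exsol-\softt_\alpha\exsol}\lesssim E\,M^{p/2}\alpha^{1-p/2}$ and hence a rank bound $\lesssim E^2M^p\alpha^{-p}$. In the exponential case it gives error $\lesssim E\alpha(1+|\ln\alpha|)^{1/(2\beta)}$ and rank $\lesssim E^2(1+|\ln\alpha|)^{1/\beta}$, exactly as in \Cref{cor:quasioptlocal}.

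For (b), the key point is that the outer loop was not yet stopped at index $I_n-1$. So $\varepsilon_n\le\normJ{\compn{n}_{I_n-1,J}-\fpmap_n\compn{n}_{I_n-1,J}}$, and by \eqref{eq:bound_en_by_unvn}, combined with the hypothesis $\normJ{\exsoln{n}-\fixedpn{n}}\lesssim\varepsilon_n$ verified in \eqref{eq:bound_vnun_by_en}, the argument of \Cref{lem:lemma3} yields
\[
\varepsilon_n\lesssim\normJ{\exsol-\softt_{\altil_n}\exsol}+\normJ{\exsol-\softt_{\altil_n/\theta}\exsol}\lesssim\normJ{\exsol-\softt_{\theta^{-2}\albar_n}\exsol}.
\]
In the algebraic case this gives $\varepsilon_n\lesssim E M^{p/2}\albar_n^{1-p/2}$, i.e.\ $\albar_n^{-1}\lesssim (EM^{p/2}/\varepsilon_n)^{2/(2-p)}$. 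Substituting into the rank bound $E^2M^p\albar_n^{-p}$ and using $\frac{2p}{2-p}=\frac1s$ produces $E^{2+1/s}M^{1/s}\varepsilon_n^{-1/s}$. This is within the claimed $E^{2+2/s}$; the extra factor gives room for the stability constants in \eqref{eq:stabilityE}, should we need to compare thresholdings at different levels.

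The exponential case is handled analogously. There $\varepsilon_n\lesssim E\albar_n(1+|\ln\albar_n|)^{1/(2\beta)}$, so $|\ln\albar_n|\lesssim 1+|\ln\varepsilon_n|+|\ln E|$. Then $(1+|\ln\albar_n|)^{1/\beta}\lesssim(1+\ln E)^{1/\beta}(1+|\ln\varepsilon_n|)^{1/\beta}$, which gives the claim.

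The main obstacle is step (a) for genuine tensors: making sure that the error of the composed operator $\softt_\alpha$ is bounded by a sum of single-matricization errors of $\exsol$ itself with only a factor $E$. I would first try the telescoping argument via non-expansiveness of each $\cS_{\mu,\alpha}$. If that fails, I would fall back on \eqref{eq:stabilityE}, which would account for the exponent $2+2/s$ stated in the proposition.
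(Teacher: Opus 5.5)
Your proof is correct, but it is organized differently from the paper's, and in the algebraic case it gives a sharper power of $E$.

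\textbf{Comparison with the paper's route.} The paper argues separately for each outer index $i$. It rewrites $\alpha_i^{-2}\normJ{\exsol-\softt_{\alpha_i}\exsol}^2$ and $r_{\mu,\alpha_i/2}(\exsol)$ as $E^{2+1/s}M^{1/s}$ times a negative power of $\min\{\normJ{\exsol-\softt_{\alpha_i}\exsol},\normJ{\exsol-\softt_{\alpha_{i-1}}\exsol}\}$. It then turns the non-stopping bound $\varepsilon\lesssim\normJ{\exsol-\softt_{\alpha_i}\exsol}+\normJ{\exsol-\softt_{\alpha_{i-1}}\exsol}$ into $\varepsilon\lesssim E\min\{\dots\}$ via \eqref{eq:stabilityE}. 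That comparison of thresholding errors at two levels costs an extra $E^{1/s}$, hence $E^{2+2/s}$.

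You eliminate the threshold itself instead. The decay bounds give $\rank_\mu(\comp_{i,j})\lesssim E^2M^p\alpha_i^{-p}\le E^2M^p\albar_n^{-p}$. A single non-stopping inequality at index $I_n-1$, with the decay bound applied to \emph{both} terms, gives $\varepsilon_n\lesssim EM^{p/2}\albar_n^{1-p/2}$ (constants depending on $\theta$). So \eqref{eq:stabilityE} is never needed, and you get $E^{2+1/s}M^{1/s}\varepsilon_n^{-1/s}$, which implies the stated bound. In the exponential case both routes give the same form, since $E$ enters only logarithmically.

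\textbf{Points to fix.} None of these affects the conclusion.

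First, monotonicity of $\normJ{\exsol-\softt_\alpha\exsol}/\alpha$ in $\alpha$ is clear for a single matricization, but not for the composed operator $\softt_\alpha$. You do not need it. Apply step (a) first, then use monotonicity of the explicit bounds $E^2M^p\alpha^{-p}$ and $E^2(1+\abs{\ln\alpha})^{1/\beta}$. The latter is monotone only for $\alpha\le 1$, which is justified as in the paper via $\normJ{\exsol}=1$.

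Second, the intermediate step $\varepsilon_n\lesssim\normJ{\exsol-\softt_{\theta^{-2}\albar_n}\exsol}$ silently uses \eqref{eq:stabilityE} and hides a factor $E$. Skip it; going directly gives the sharper result.

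Third, in step (a), non-expansiveness is not the right mechanism. By itself it only gives $\norm{w_{\mu-1}-\cS_{\mu,\alpha}w_{\mu-1}}\le\norm{v-\cS_{\mu,\alpha}v}+\norm{w_{\mu-1}-v}$, which recursively produces a factor of order $2^E$. The factor $E$ comes from a different fact: soft thresholding of a tree-compatible matricization $\nu\neq\mu$ acts as a contraction on one side of $\cM_\mu$. Hence $\sigma_{\mu,k}(w_{\mu-1})\le\sigma_{\mu,k}(v)$ for all $k$. This is \cite[Lemma 3.4]{BS17}, and with \cite[Prop.~3.6]{BS17} it gives exactly the bounds you state, which the paper simply cites.

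Your proposed fallback to \eqref{eq:stabilityE} would not help with step (a). That estimate compares different thresholds, and its proof itself rests on \cite[Lemma 3.4]{BS17}.
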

	
	\begin{proof}
	We suppress the dependence on $n$ to simplify notation. 
	By the local rank bound of \Cref{prop:local_error_rank},
	\begin{equation}\label{eq: rank sum}
		\rank_\mu(\mathbf v_{i,j}) \lesssim \frac{\| \mathbf{u} - \mathcal{S}_{\alpha_i} \mathbf{u} \|_Q^2}{\alpha_i^2} + \frac{\| \mathbf{u} - \mathcal{S}_{\alpha_{i-1}} \mathbf{u} \|_Q^2}{\alpha_{i-1}^2} + \rank_\mu(\mathcal S_{\mu, \alpha_i/2} \mathbf u),
	\end{equation}
	where the omitted constant depends on $\rho$, $\nu$ and $\theta$.
	Moreover, as long as the stopping criterion is not satisfied, \eqref{eq:bound_en_by_unvn} gives
	\begin{align} 
		\begin{split}\label{eq: eps bound}
			\varepsilon &\leq 4 \frac{1 + \rho}{1 - \rho} \| \mathbf{u} - \mathcal{ S}_{\alpha_i} \mathbf{ u} \|_Q + 2 \frac{1+\rho}{(1-\nu)(1-\rho)}\| \mathbf{u} - \mathcal{ S}_{\alpha_{i-1}} \mathbf{ u} \|_Q \\
			&\lesssim \| \mathbf{u} - \mathcal{ S}_{\alpha_i} \mathbf{ u} \|_Q + \| \mathbf{u} - \mathcal{ S}_{\alpha_{i-1}} \mathbf{ u} \|_Q \\
			&\lesssim E  \min \Big\{ \| \mathbf{u} - \mathcal{S}_{\alpha_i} \mathbf{u} \|_Q, \| \mathbf{u} - \mathcal{S}_{\alpha_{i-1}} \mathbf{u} \|_Q \Big\},
		\end{split}
	\end{align}
	with a constant also depending on $\rho$, $\nu$ and $\theta$, and where we used \eqref{eq:stabilityE} to obtain the last inequality.

	Under the assumption that the singular values have algebraic decay, by \cite[Proposition 3.6]{BS17} we have
	$\| \mathbf u - \mathcal S_{\alpha}(\mathbf u) \|_Q \lesssim E M(\mathbf u)^{\frac{p}{2}} \alpha^{1-\frac{p}{2}}$,
	which for $s = \frac{1}{p}-\frac{1}{2}$ gives 
	\begin{equation*}
		\frac{\| \mathbf{u} - \mathcal{S}_{\alpha_i} \mathbf{u} \|_Q^2}{\alpha_i^2} \lesssim E^{\frac{4}{2-p}}  M(\mathbf u)^{\frac{2p}{2-p}} \| \mathbf{u} - \mathcal{S}_{\alpha_i} \mathbf{u} \|_Q^{-\frac{2p}{2-p}} = E^{\frac{4}{2-p}} \, M(\mathbf u)^{\frac{1}{s}} \| \mathbf{u} - \mathcal{S}_{\alpha_i} \mathbf{u} \|_Q^{-\frac{1}{s}}.
	\end{equation*}
	Analogously, we obtain
	\begin{equation*}
		\frac{\| \mathbf{u} - \mathcal{S}_{\alpha_{i-1}} \mathbf{u} \|_Q^2}{\alpha_{i-1}^2} \lesssim E^{\frac{4}{2-p}} \, M(\mathbf u)^{\frac{1}{s}} \| \mathbf{u} - \mathcal{S}_{\alpha_{i-1}} \mathbf{u} \|_Q^{-\frac{1}{s}}.
	\end{equation*}
	Using \eqref{eq:ranks_decay}, it further holds that 
	\begin{align*}
		r_{\mu,\alpha_i/2}(\exsol)=\rank_\mu(\mathcal S_{\mu, \alpha_i/2} \mathbf u) &\lesssim M(\mathbf u)^p \alpha_i^{-p} \lesssim M(\mathbf u)^p E^{\frac{1}{s}}  M(\mathbf u)^{\frac{p^2}{2-p}}\| \mathbf{u} - \mathcal{S}_{\alpha_i} \mathbf{u} \|_Q^{-\frac{1}{s}} \\
		&= E^{\frac{1}{s}} M(\mathbf u)^{\frac{1}{s}} \| \mathbf{u} - \mathcal{S}_{\alpha_i} \mathbf{u} \|_Q^{-\frac{1}{s}}.
	\end{align*}
	Inserting the last three inequalities into \eqref{eq: rank sum}, and using that $\frac{4}{2-p}=2+\frac{1}{s} > \frac{1}{s}$, we arrive at
	\begin{equation*}
		\rank_\mu(\mathbf v_{i,j}) \lesssim E^{2+ \frac{1}{s}}  M(\mathbf u)^{\frac{1}{s}} \min \Big\{ \| \mathbf{u} - \mathcal{S}_{\alpha_i} \mathbf{u} \|_Q, \| \mathbf{u} - \mathcal{S}_{\alpha_{i-1}} \mathbf{u} \|_Q \Big\}^{-\frac{1}{s}}.
	\end{equation*}
	Using \eqref{eq: eps bound}, this results in 
	\begin{equation*}%
		\rank_\mu(\mathbf v_{i,j}) \lesssim E^{2 + \frac{2}{s}} M(\mathbf u)^{\frac{1}{s}} \varepsilon^{-\frac{1}{s}},
	\end{equation*}
	which shows the stated rank bound in the case of algebraically decaying singular values.
	
	We apply the same strategy to the case of exponential decay of the singular values of $\exsol$.
	Under this assumption, by \cite[Proposition 3.6]{BS17},
	\begin{equation*}
		\| \mathbf u - \mathcal S_\alpha\mathbf u \|_Q \lesssim E (1+ \abs{\ln \alpha})^{\frac{1}{2 \beta}} \alpha \qquad \text{ and } \qquad r_{\mu,\alpha}(\exsol) \lesssim (1+\abs{\ln\alpha})^{\frac{1}{\beta}}.
	\end{equation*}
	We assume in the following that $\alpha_i,\alpha_{i-1}\in(0,1)$. We remark that this is actually not a restriction in the present work: in the case of the Schr\"odinger equation, the exact solution satisfies $\normJ{\exsol}=1$, which in turn implies that $\mathcal{S}_\alpha\exsol=0$ whenever $\alpha\geq 1$. Since $\alpha_i = \theta \alpha_{i-1}$, \eqref{eq: rank sum} then yields
	\begin{equation}\label{eq: interm rank exp}
		\rank_\mu(\mathbf v_{i,j}) \lesssim E^2 (1+ \abs{\ln \alpha_i})^{\frac{1}{ \beta}}.
	\end{equation}
	Moreover, \eqref{eq: eps bound} gives $\varepsilon \lesssim E^2 (1 + \abs{\ln \alpha_i})^{\frac{1}{2 \beta}}\alpha_i$,
	which, for $0 < \alpha_i < 1$, is equivalent to
	\begin{equation*}
		\abs{\ln \alpha_i} \lesssim \abs{\ln \varepsilon} + 2\ln E + \frac{1}{2\beta} \ln(1 +  \abs{ \ln \alpha_i } ).
	\end{equation*}
	Using the inequality $\ln(1+x) \leq C_\beta + \beta x$, with $C_\beta = \beta-\ln\beta-1 \geq 0$
	for all $x \geq 0$, we further obtain $	\abs{\ln \alpha_i} \lesssim \abs{\ln \varepsilon} + 2\ln E + \frac{1}{2} \abs{\ln \alpha_i}$
	and thus $\abs{\ln \alpha_i} \lesssim \abs{\ln \varepsilon} + \ln E$,
	with a constant that depends on $\rho$, $\theta$, $C$, $c$ and $\beta$ but is independent of $\alpha_i$, $\varepsilon$ and $E$.
	Inserting this back into \eqref{eq: interm rank exp} yields
	\begin{align}
	\label{eq:optimality_local_exp}
			\rank_\mu(\mathbf v_{i,j}) \lesssim E^2 (1+ \abs{\ln \varepsilon} + \ln E)^{\frac{1}{ \beta}} 
			\lesssim E^2 (1 +  \ln E)^{\frac{1}{\beta}}(1 + \abs{\ln \varepsilon})^\frac{1}{\beta}.
	\end{align}
	This is the stated bound in the case of exponentially decaying singular values.
	\end{proof}

	Compared to \eqref{eq:ranks_decay}, \Cref{prp:quasiopt} shows (local) quasi-optimality in the case of algebraic or exponential-type decay of singular values, in both cases with constants depending only polynomially on $d$ via $E$.
	These local optimality bounds can be easily extended to global bounds on the ranks of the approximations $\initsol_n$ computed at the times $t_n$. 
	
	\begin{cor} \label{cor:quasiopt}
	With the notation and assumptions of \Cref{prp:quasiopt}, for $\mu = 1,2,\ldots, E$ and all $n$, we have
	\begin{equation*}
		\rank_\mu(\initsol_n) \lesssim  \rank_\mat(\initsol_0) +  \frac{T}{h} \, Q \rank_\mu (V) \max_{k = 1, \dots, n} {R}(E,k).
	\end{equation*}
	\end{cor}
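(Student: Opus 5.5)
The approach is to combine the telescoping rank recursion from the proof of \Cref{prop:global_error_rank} with the local quasi-optimality bound of \Cref{prp:quasiopt}. Specifically, since $\delta_n=0$ in the global analysis, $\initsol_n=\cfpmap_n\compn{n}_{I_n,J_{I_n}}(t_n)$. The rank estimate from \cite[Lemma 22]{BDS:25} used there gives, for each $\mu$,
\begin{equation*}
	\rank_\mu(\initsol_k)\leq \rank_\mu(\initsol_{k-1})+\nqp\rank_\mu(V)\rank_\mu\bigl(\compn{k}_{I_k,J_{I_k}}\bigr), \qquad k=1,\dots,n .
\end{equation*}
Iterating this from $k=1$ to $k=n$ yields
\begin{equation*}
	\rank_\mu(\initsol_n)\leq\rank_\mu(\initsol_0)+\nqp\rank_\mu(V)\sum_{k=1}^n\rank_\mu\bigl(\compn{k}_{I_k,J_{I_k}}\bigr).
\end{equation*}

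Next, I bound each summand by \Cref{prp:quasiopt} applied on the subinterval $\timeint_k$:
\begin{equation*}
	\rank_\mu\bigl(\compn{k}_{I_k,J_{I_k}}\bigr)\leq\max_{i,j}\rank_\mu\bigl(\compn{k}_{i,j}\bigr)\lesssim R(E,k)\leq\max_{l=1,\dots,n}R(E,l).
\end{equation*}
The implied constant depends only on $\rho,\nu,\theta$ and the decay parameters, and not on $k$. The sum therefore has at most $n\leq N=T/h$ terms, each bounded by the same maximum, which gives the claimed estimate.

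The one point to check is that \Cref{prp:quasiopt} is actually applicable on every subinterval. Its proof relies on \Cref{prop:local_error_rank}, and hence on the hypothesis $\normJ{\exsoln{k}-\fixedpn{k}}\leq\frac{1-\rho}{4(1+\rho)}\frac{1-\nu}{3-2\nu}\varepsilon_k$. I would verify this exactly as in \eqref{eq:bound_vnun_by_en}, using the choice \eqref{eq:epsilon_n} of $\varepsilon_k$ together with \Cref{lem:local_inside} and \Cref{prop:error_bound_1}; this holds for all $k$ simultaneously.

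The degenerate case $I_k=0$ contributes nothing. In that case the endpoint iterate vanishes, so $\initsol_k=\initsol_{k-1}$ and the rank does not increase. I would also confirm that the $\lesssim$ constant from \Cref{prp:quasiopt} is uniform in $k$, since it depends only on the quantities listed there. Beyond this bookkeeping, I do not expect any genuine obstacle.
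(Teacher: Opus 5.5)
Your proposal is correct and follows the paper's route: you telescope the rank recursion from the proof of \Cref{prop:global_error_rank} (via \cite[Lemma 22]{BDS:25}), bound each subinterval's iterate rank by \Cref{prp:quasiopt} with its hypothesis verified through \eqref{eq:bound_vnun_by_en}, and use $n\leq T/h$. The only difference is that you plug \Cref{prp:quasiopt} into the intermediate inequality involving the actual iterate ranks, rather than into the explicit $\overline{r}$-terms in the statement of \Cref{prop:global_error_rank}, which is inessential.
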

	
	\begin{proof}
	This follows directly from the bound on the rank of $\initsol_n$ given by \Cref{prop:global_error_rank} combined with \Cref{prp:quasiopt}.
	\end{proof}
	
	This means in particular that the obtained approximation ranks remain quasi‑optimal with respect to the best achievable ranks up to polynomial factors in the dimension and a linear dependence on the number of time steps.
	Note that the counterexamples given in \cite{DSZ26} indicate that when comparing ranks to those of best approximations of the exact solution $u$, a prefactor scaling as $h^{-1}$ can in general not be avoided without further assumptions on the problem -- transferred to our setting, this would mean that the factor $h^{-1}$ in the bound of Corollary \ref{cor:quasiopt} cannot be removed even when using positive recompression tolerances $\delta_n$.
	At the same time, here we do not require $h\to 0$ to enforce convergence, but rather aim at refinement of discretizations on each subinterval.

	\section{Numerical experiments}\label{sec:numexp}
    In this section, we assess the performance of the iterative thresholding integrator combined with a hierarchical tensor representation for high-dimensional Schr\"odinger initial value problems of the form \eqref{eq:schroedinger} using a test problem with verifiable reference solutions. We track the evolution of the hierarchical ranks and report the norm deviation $|\norm{u(t)}-1|$ and the relative energy error. While the test case is beyond what is covered by our analysis,  the results show that the method still performs well.
    
    In an example in four dimensions, the low-rank solution error is measured in the Frobenius norm against a reference solution represented as the full coefficient tensor. Constructing a comparable tensor in a second $64$-dimensional test is computationally prohibitive, but in this case the analytic Gaussian reference solution can still be evaluated pointwise. We therefore estimate the continuous $L^2$ error at the time interval boundaries by the Monte Carlo sampling described in \Cref{app:gaussian_reference}. All experiments were implemented in Julia~1.11.5 and performed on 16 CPU cores on a Dell PowerEdge M630 node equipped with two Intel Xeon E5-2660 v3 processors and approximately $250$~GB of memory.

    Following \cite{TC2015,RO2016}, on $\R^d$ we consider the bilinearly coupled oscillator Hamiltonian
    \begin{align}
    \label{eq:BCO_Hamiltonian}
     H = \sum_{i=1}^d \frac{\omega_i}{2}\big(-\partial_{x_i}^2+x_i^2 \big)
         + \sum_{j=1}^{d-1}\sum_{i>j} \alpha_{ij}x_ix_j,
    \end{align}
    where $\omega_i>0$ denotes the frequency of the $i$th harmonic oscillator, and $\alpha_{ij}>0$ denotes the coupling strength between the $i$th and $j$th degrees of freedom. Each coordinate is governed by a harmonic oscillator, while the quadratic cross terms describe bilinear interactions between distinct coordinates. Hamiltonians of this type arise, for example, in molecular vibration analysis and coupled lattice dynamics. This class of problems goes beyond our theory in that it involves unbounded potentials on $\R^d$.  In the experiments below, we set $\omega_j=\sqrt{\frac{j}{2}}$ and $\alpha_{ij}=\frac{1}{10}$. 
    
    For the spatial discretization, we employ a Hermite basis and retain $K$ basis functions in each coordinate direction. The spatial semidiscretization of the $d$-dimensional Schr\"odinger equation with the Hamiltonian \eqref{eq:BCO_Hamiltonian} yields the finite-dimensional system
    \begin{equation}
    \label{eq:semi_disc}
     i\partial_t\mathbf u(t)=(\mathbf H_1 + \mathbf H_2) \mathbf u(t)
    \end{equation}
    with the matrices
        \[
     \mathbf H_1=\sum_{i=1}^d\mathbf S_i,
     \qquad
     \mathbf H_2=\frac1{20}\left(\left(\sum_{i=1}^d\mathbf Q_i\right)^2-
     \sum_{i=1}^d\mathbf Q_i^2\right)
    \]
    where $\mathbf S_i=\frac{\omega_i}{2}I^{\otimes(i-1)}\otimes\mathbf S\otimes I^{\otimes(d-i)}$ and
    $ \mathbf Q_i=I^{\otimes(i-1)}\otimes\mathbf Q\otimes I^{\otimes(d-i)}$.
    Here, $\mathbf Q\in\R^{K\times K}$ is the symmetric tridiagonal matrix defined by
    $(\mathbf Q)_{j,j+1}=(\mathbf Q)_{j+1,j}
     =\sqrt{j/2}$ for $j=1,\ldots,K-1$,
    with all remaining entries equal to zero, and $\mathbf S=\operatorname{diag}(1,3,\ldots,2K-1)$. Note that a direct representation of the bilinear coupling involves $d(d-1)/2$ pairwise interaction terms, whereas \eqref{eq:semi_disc} expresses the coupling through operators of hierarchical rank bounded independently of $d$; in particular, $\sum_{i=1}^d \mathbf S_i$ and $\sum_{i=1}^d \mathbf Q_i$ are of rank two. We evaluate the interaction term by applying $\sum_i\mathbf Q_i$ twice and $\sum_i\mathbf Q_i^2$ once.
    
    The twisted variable used in these experiments differs slightly from that introduced in \eqref{eq:twisted_f}. 
    Instead of twisting only with the Laplacian, we use the full separable part $\mathbf H_1$.
    The transformed variable then satisfies
    \[
     \partial_t\left(\mathrm e^{it\mathbf H_1}\mathbf u(t)\right)=-i \mathrm e^{it\mathbf H_1}\mathbf H_2\mathrm e^{-it\mathbf H_1}\left(\mathrm e^{it\mathbf H_1}\mathbf u(t)\right).
    \]
    Because the exponential of $\mathbf H_1$ is a tensor product of one-dimensional exponentials, its application leaves the hierarchical ranks unchanged. We apply Algorithm~\ref{alg:basic} to approximate $\mathrm e^{it\mathbf H_1}\mathbf u(t)$.
    
   In addition to the recompression operations $\recomp_{\delta_n}$ at interval endpoints, we introduce recompression operations with prescribed relative error to improve the practical efficiency of the evaluation of the fixed-point mappings. Here we use a simplified version of the strategy used in \cite[Sec.~5.1]{BS17}. With
    $r_{i,j}^{(n)}:=\normJ{\fpmap_n\compn{n}_{i,j}-\compn{n}_{i,j}}$,
    for the next evaluation of $\fpmap_n$, we choose a recompression budget of $10^{-2}r_{i,j}^{(n)}$. Half of this budget is allocated to the application of the $Q$-operator terms and half to their accumulation. The associated tolerances are divided by $Q$ to distribute the budget over the $Q$ stages. 
    
    In all tests, we use $Q=10$, step size $h = \frac1{10}$ and $\theta=\frac12$.
    The inner iterations converge rather quickly in our tests. During the first few outer iterations, the stopping criterion is typically met after one inner step, whereas later outer iterations usually require two steps, so that $J_i\in\{1,2\}$. This favorable convergence is observed even for values of $h$ that do not ensure contractivity of the fixed point mapping on the full space, an effect that may be related to a beneficial interaction with the thresholding operations that may merit further investigation. However, in this regime, the iteration parameters need to be chosen heuristically; in particular, the factor in the threshold decrease criterion is chosen as $\frac35$. 
    
    \subsection{4D example}
    Let $\varphi_0(x)=\pi^{-1/4}e^{-x^2/2}$ and $\varphi_1(x)=\sqrt2x\varphi_0(x)$. We consider the initial datum
    \begin{equation}\label{eq:init_4d}
     u_0(x)=\sqrt{\frac{2}{5}}\left[\prod_{\mu=1}^4\varphi_0(x_\mu)
     +\frac12\sum_{1\leq i<j\leq4}\varphi_1(x_i)\varphi_1(x_j)
     \prod_{\mu\notin\{i,j\}}\varphi_0(x_\mu)\right].
    \end{equation}
    Equivalently, $u_0(x)=\sqrt{2/5}\,\pi^{-1}e^{-\frac12\norm{x}^2}(1+\sum_{i<j}x_ix_j)$. This can be represented exactly in HT format, where all leaf ranks are equal to $2$ and where the rank of the remaining matricization corresponding to $s = \{3,4\}$ is $4$. For this test, we use $T = 2$ and $K = 50$ and set $\varepsilon_n=10^{-4}$, $\delta_n=7.07\times10^{-5}$. Figure \ref{fig:results_4d_rank} displays how the hierarchical ranks evolve during the time integration, while Figure \ref{fig:results_4d_errors} shows the solution error in the Frobenius norm, the norm deviation, and the relative energy error at all stored snapshots.
    
    \begin{figure}[tbp]
    \centering
    \includegraphics[width=0.8\linewidth]{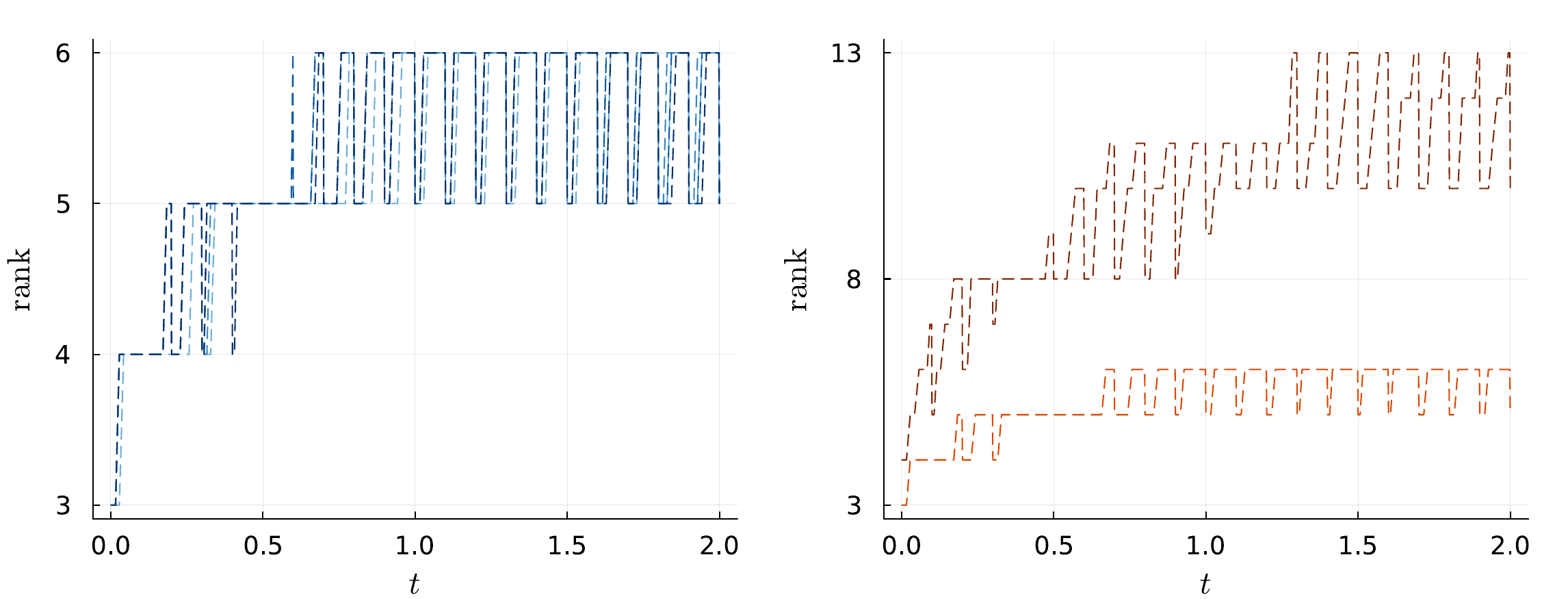}\\[-1mm]{\scriptsize (a) Legendre}\par\vspace{4mm}
    \includegraphics[width=0.8\linewidth]{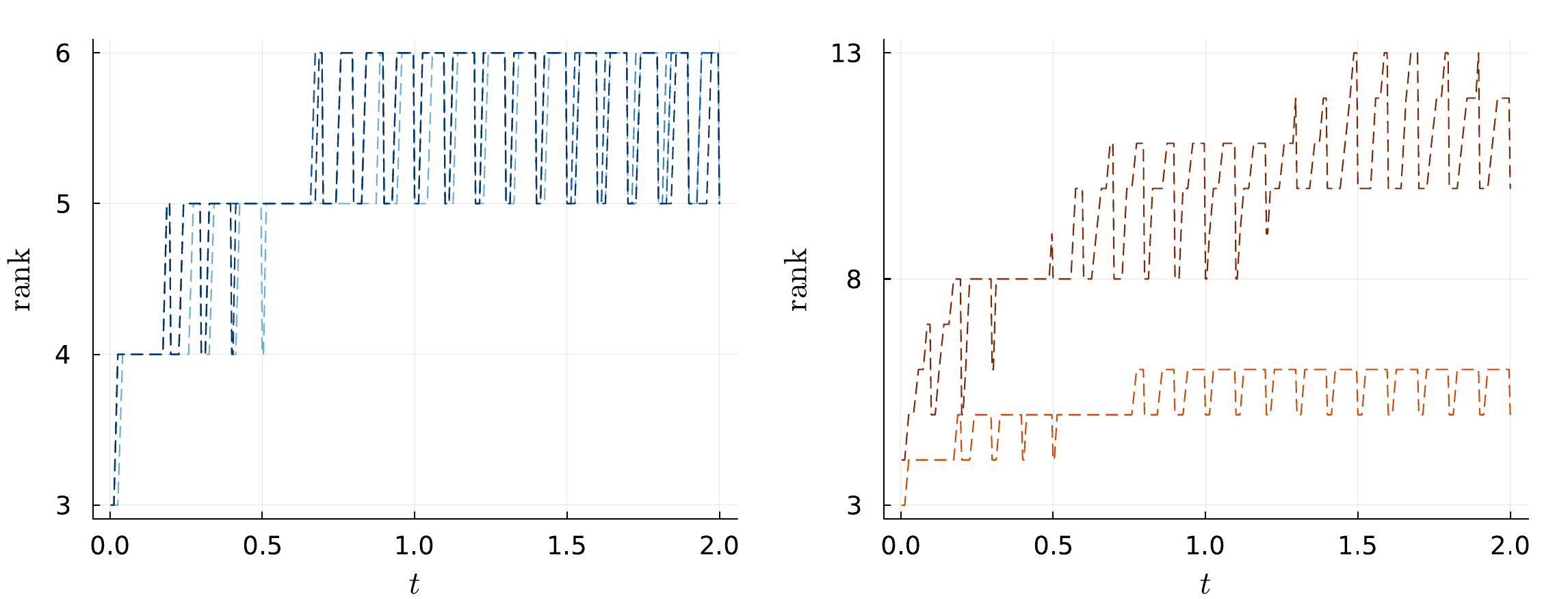}\\[-1mm]{\scriptsize (b) Lobatto}
    \caption{\small Rank evolution in 4D case for $u_0$ as in \eqref{eq:init_4d}. \emph{Blue lines, light to dark:} ranks of matricizations corresponding to $\{1\},\{2\},\{3\},\{4\}$; \emph{orange lines, light to dark:} ranks of matricizations corresponding to $\{2,3,4\}, \{3,4\}$. (a) Gauss--Legendre nodes; (b) Gauss--Lobatto nodes.}
    \label{fig:results_4d_rank}
    \end{figure}
    
    \begin{figure}[tbp]
    \centering
    \begin{minipage}{0.425\linewidth}\centering
    \includegraphics[width=.9\linewidth]{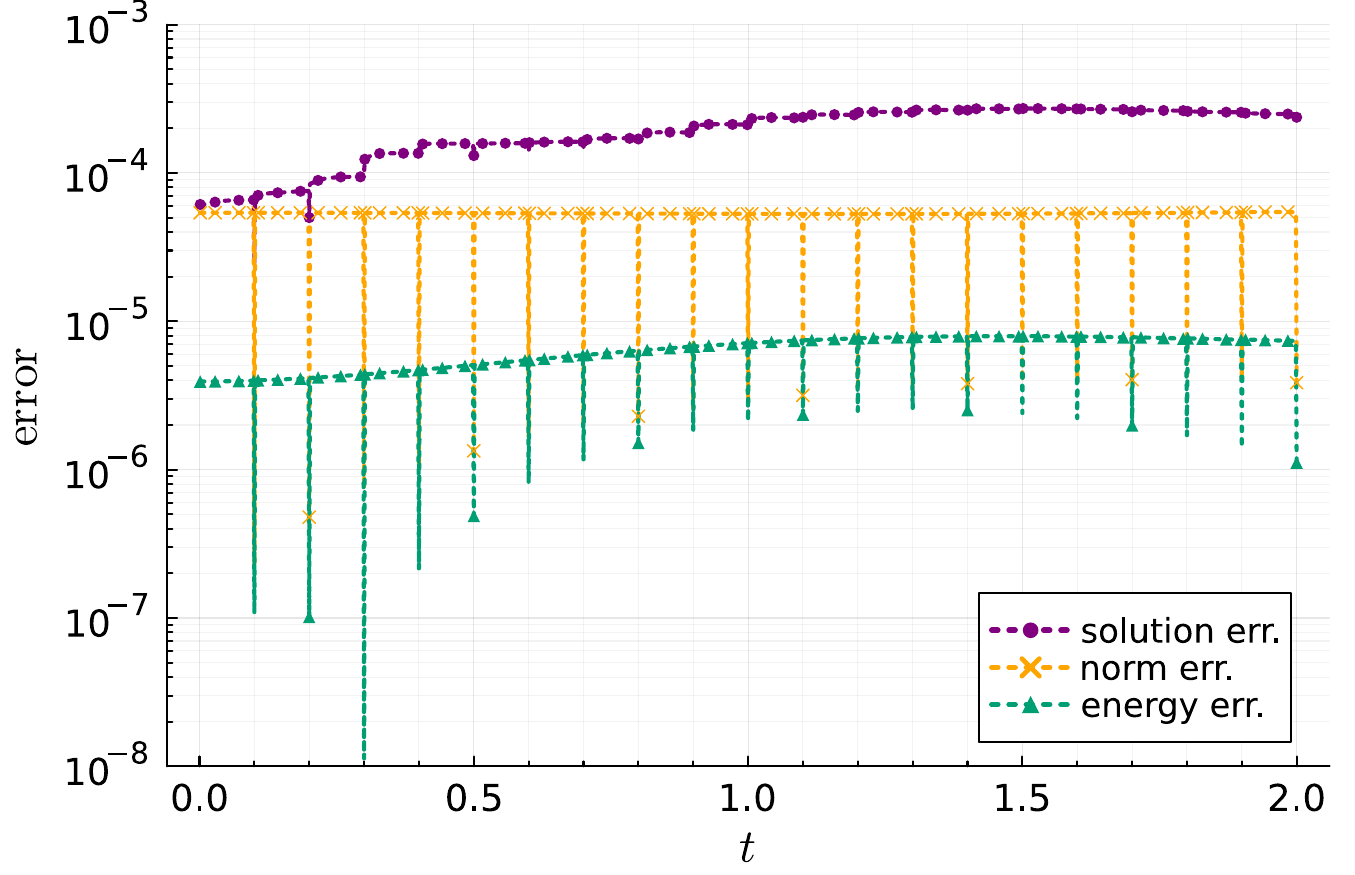}\\[-2mm]{\scriptsize (a) Legendre}
    \end{minipage}\hspace{2mm}
    \begin{minipage}{0.425\linewidth}\centering
    \includegraphics[width=.9\linewidth]{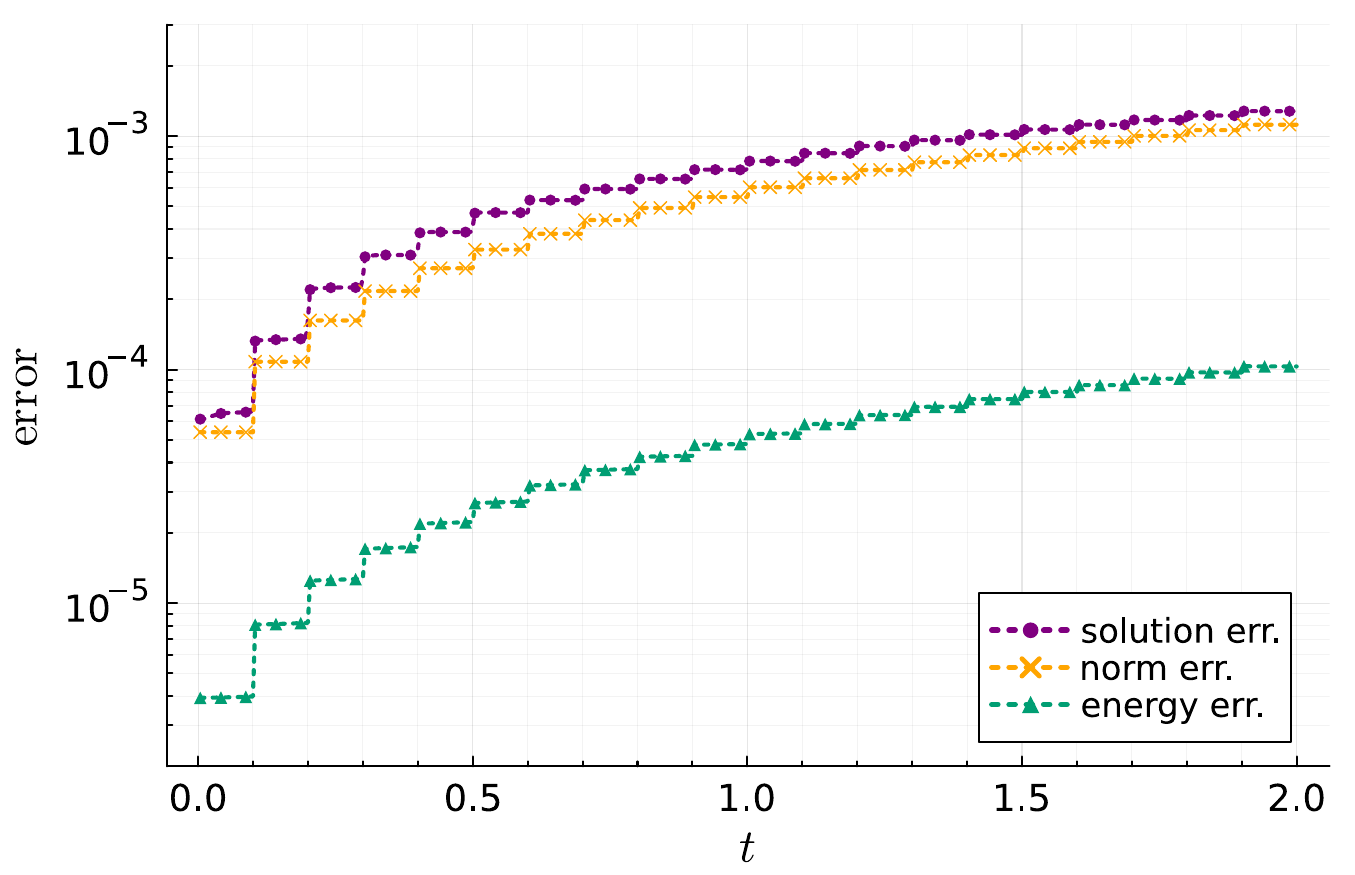}\\[-2mm]{\scriptsize (b) Lobatto}
    \end{minipage}
    \caption{\small Error evolution in 4D case for $u_0$ as in \eqref{eq:init_4d}. \emph{Purple line, circle markers:} Frobenius error of numerical solution; \emph{orange line, cross markers:} norm deviation; \emph{green line, triangle markers:} relative energy error. (a) Gauss--Legendre nodes; (b) Gauss--Lobatto nodes.}
    \label{fig:results_4d_errors}
    \end{figure}
    The reference solution is computed by applying $\exp(-it (\mathbf H_1 + \mathbf H_2))$ to the full coefficient tensor using a Krylov method with tolerance $10^{-10}$, followed by recompression with the same tolerance. Since the largest possible internal rank in the linear tree is $K^{\lfloor d/2\rfloor}$, for $K=50$, an internal rank can be as large as $K^2=2500$. In contrast, the stored histories for both collocation rules have a maximum leaf rank of $6$ and a maximum internal rank of $13$. The Gauss--Legendre test yields a maximum norm deviation of $5.45\times10^{-5}$, a maximum relative energy error of $7.95\times10^{-6}$, and a maximum solution error of $2.72\times10^{-4}$. The corresponding values for the Gauss--Lobatto test are $1.12\times10^{-3}$, $1.03\times10^{-4}$, and $1.28\times10^{-3}$.
    
   The Legendre nodes do not include $t_n$, and the higher-order accurate endpoint value is obtained through the additional evaluation $\cfpmap_n\compn{n}_{I_n,J_{I_n}}(t_n)$. For an exact fixed point, this map is isometric by \Cref{lem:iso_pres}. The remaining norm deviation arises from soft thresholding, residual-based recompressions, endpoint recompression, and the nonzero stopping residual. The simultaneous reduction in the energy error reflects the higher endpoint accuracy, although the exact energy conservation is not ensured in the twisted formulation. For the Lobatto method, $t_n$ is already the final stage. No additional application of $\Phi_n$ is required, and the corresponding endpoint oscillation is absent.
    
    \subsection{64D example with rank-one initial data}
    The normalized initial datum is
    \begin{equation}\label{eq:init_64d}
     u_0(x)=\prod_{\mu=1}^{64}\varphi_0(x_\mu)=\pi^{-16}e^{-\frac12\norm{x}^2},
    \end{equation}
with coefficients of hierarchical ranks one. Here, we choose $T = 1$ and $K = 32$ with $\varepsilon_n=5\times10^{-4}$, $\delta_n=1.77\times10^{-3}$. The results are shown in Figure \ref{fig:results_64d}.
    
    \begin{figure}[tbp]
    \centering
    \includegraphics[width=0.8\linewidth]{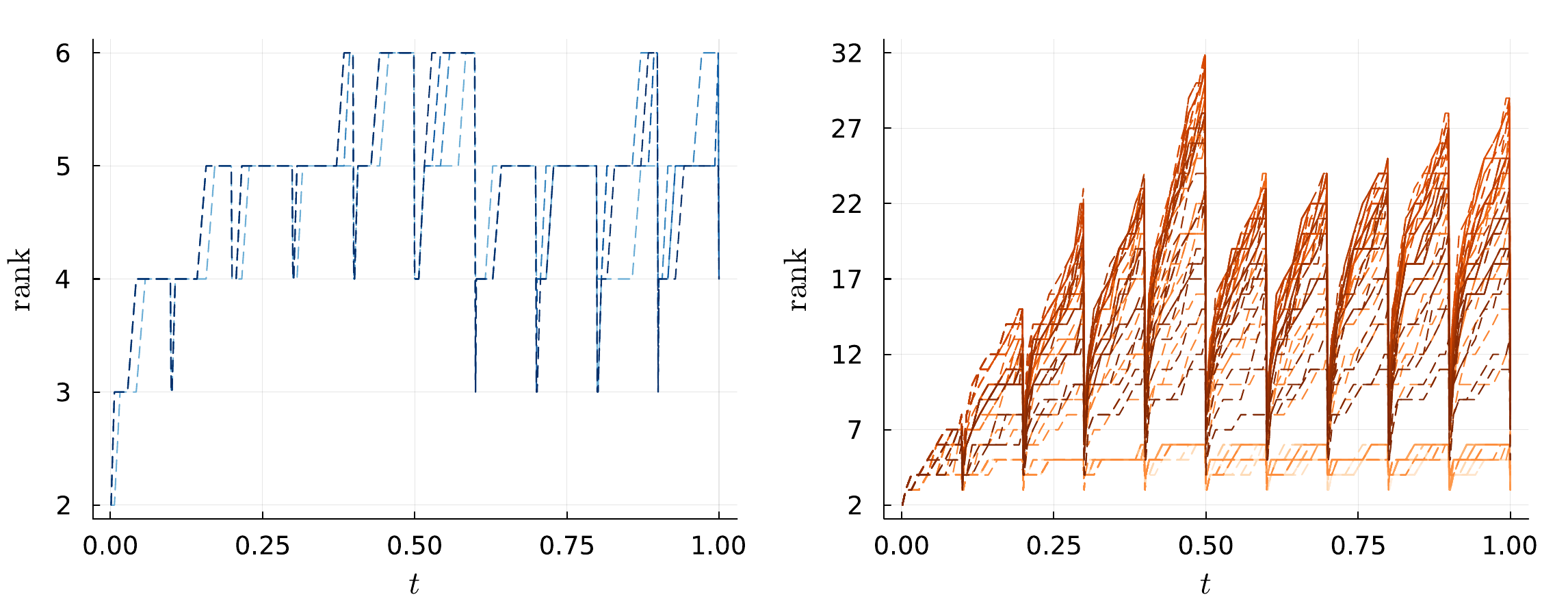}\\[-1mm]{\scriptsize (a) Rank evolution}\par\vspace{4mm}
    \includegraphics[width=0.425\linewidth]{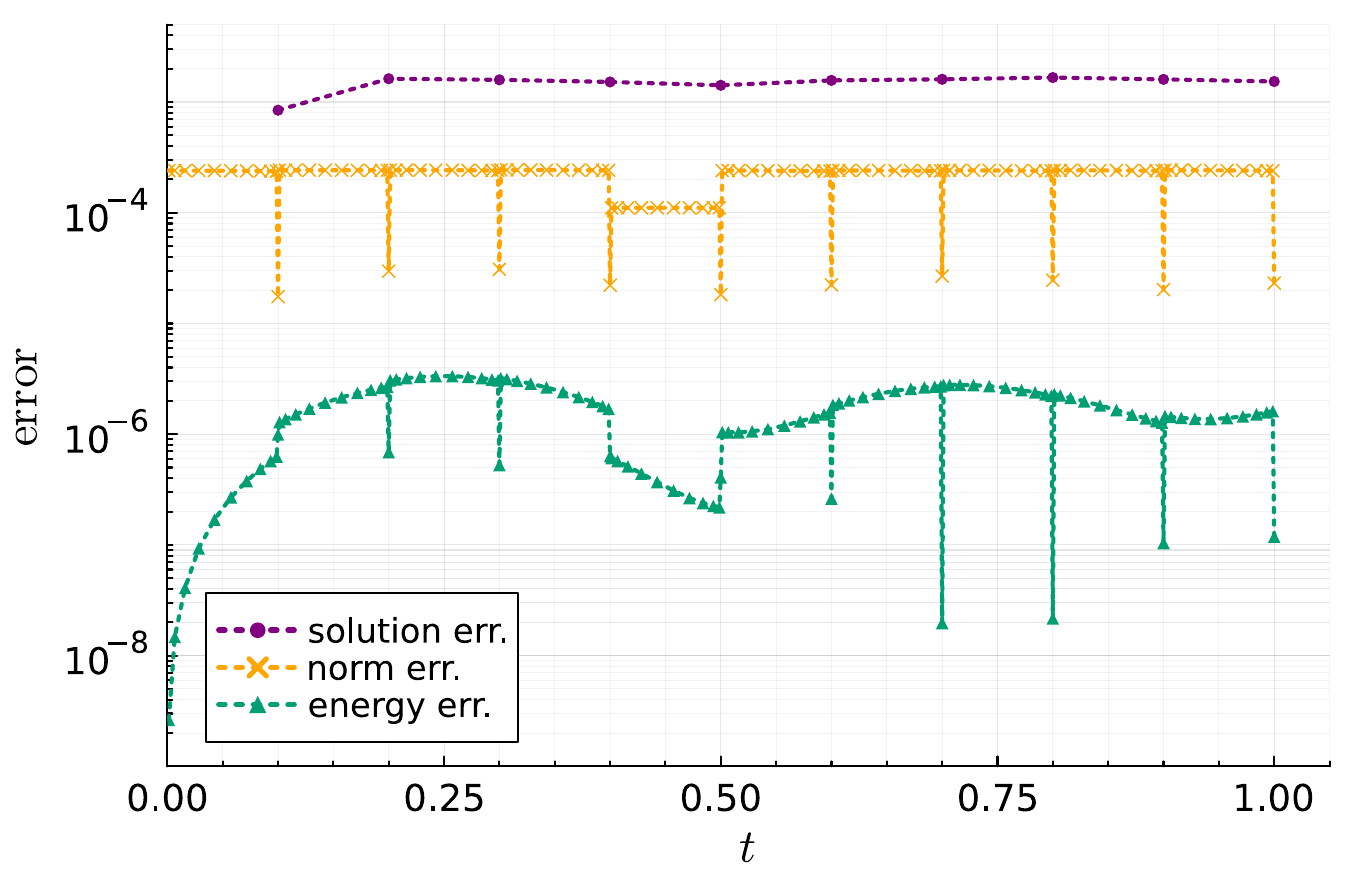}\\[-1mm]{\scriptsize (b) Error evolution}
    \caption{\small 64D results for $u_0$ as in \eqref{eq:init_64d}. (a) \emph{Blue lines, light to dark:}  ranks of matricizations corresponding to $\{1\},\{2\},\dots,\{64\}$; \emph{orange lines, light to dark:} ranks of matricizations corresponding to $\{2,\dots,64\}, \{3,\dots, 64\}, \dots, \{63, 64\}$. (b) \emph{Purple line, circle markers:} Monte Carlo estimate of continuous $L^2$ error at all $t_n$; \emph{orange line, cross markers:} norm deviation and \emph{green line, triangle markers:} relative energy error, both at all stored snapshots.}
    \label{fig:results_64d}
    \end{figure}
    The continuous reference solution is constructed by applying a Bogoliubov normal-mode transformation to the quadratic Hamiltonian and propagating the resulting Gaussian analytically. At each time-step endpoint, the numerical solution is transformed back to the physical variable and evaluated pointwise in its HT Hermite representation. Its continuous $L^2$ error is then estimated using $10^5$ samples from the normalized density of the Gaussian reference (see \Cref{app:gaussian_reference}). For $K=32$, the internal ranks can be as large as $K^{32}=32^{32}$. By comparison, the stored history has a maximum leaf rank of $6$ and a maximum internal rank of $32$. The rank reductions at the interval endpoints result from recompression. Over all $110$ stored collocation and endpoint snapshots, the maximum norm deviation and relative energy error are $2.43\times10^{-4}$ and $3.34\times10^{-6}$, respectively. The maximum estimated endpoint error is $1.66\times10^{-3}$ at $t=0.8$, and the estimate at $t=1$ is $1.53\times10^{-3}$.
	
	\section{Conclusion}
    In this paper, we have presented a time integration scheme for the high-dimensional, linear, time-dependent Schrödinger equation based on hierarchical tensor approximation. Building on previous analysis of the matrix setting, we have developed a corresponding framework for hierarchical tensors that avoids the curse of dimensionality while retaining rigorous performance guarantees.
	
	The proposed method combines a fixed-point iteration with thresholding of intermediate quantities, where the threshold is decreased adaptively to balance the approximation error and the tensor ranks. In this sense, our strategy can be regarded as an intermediate approach between dynamical low-rank approximation and step-truncation techniques. We have established quasi-optimality of the resulting ranks, with constants exhibiting only low-degree polynomial dependence on the problem dimension. 
	Numerical experiments in dimensions up to 64 demonstrate the effectiveness of the proposed approach also in tests that go beyond the theory.
	
	While the time-dependent Schr\"odinger equation served as the motivating application, the underlying arguments are not specific to this problem. Therefore, this work provides several interesting directions for future research. 
	A natural extension of the strategy outlined in this paper consists in its application to other classes of problems, such as nonlinear and parabolic evolution equations.
	We also mention the possibility of combining the present approach with time-stepping strategies designed for the regime of vanishing time steps. Favorable rank bounds then depend on further assumptions on the properties of the problem, and the analysis of such schemes remains to be investigated in future work.
	
	\section*{Acknowledgements}
	
	The authors thank Dante Kennes for pointing out the explicit solution in Appendix \ref{app:gaussian_reference}.	
	ChatGPT 5.5 and 5.6 were used for literature research, proofreading and coding assistance.
	Funded by the European Union (ERC, COCOA, 101170147). Views and opinions expressed are however those of the authors only and do not necessarily reflect those of the European Union or the European Research Council. Neither the European Union nor the granting authority can be held responsible for them. Financial support from RWTH Aachen Profile Area Modeling and Simulation Sciences (MSS) for P.S.\ is gratefully acknowledged.
	
	\bibliographystyle{plain}
	\bibliography{BJSVtensorthreshint}
	
	\begin{appendix}
	\section{Properties of hierarchical tensor soft thresholding}\label{app:ststab}

	\begin{lemma} \label{L:stability}
		For $0 < \alpha \leq \beta$, the bound \eqref{eq:stabilityE} holds true. 
	\end{lemma}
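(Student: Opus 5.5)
Since $\normJ{\cdot}$ is a componentwise maximum and $\softt_\alpha$ acts on each component separately, it suffices to prove
\[
\norm{u-\softt_\alpha u}\le E\norm{u-\softt_\beta u}\le E^2\tfrac{\beta}{\alpha}\norm{u-\softt_\alpha u}
\]
for a single tensor $u\in\cH$ and then take maxima over $q$. I would work with the complementary maps $P_{\mu,\gamma}(v):=v-\softt_{\mu,\gamma}(v)$. In the matricization $\cM_\mu$, $P_{\mu,\gamma}$ replaces each singular value $\sigma_k$ by $\min\{\sigma_k,\gamma\}$ and keeps the singular vectors. It is therefore the metric projection onto the convex set $\{v:\norm{\cM_\mu(v)}_{2\to2}\le\gamma\}$, and in particular it is non-expansive. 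From the explicit singular value formula I get two elementary matrix facts.
\begin{itemize}
\item[(a)] Monotonicity: $\norm{P_{\mu,\alpha}v}\le\norm{P_{\mu,\beta}v}$ for $\alpha\le\beta$.
\item[(b)] Scaling: $\norm{P_{\mu,\beta}v}\le\frac{\beta}{\alpha}\norm{P_{\mu,\alpha}v}$, because $\min\{\sigma,\beta\}\le\frac\beta\alpha\min\{\sigma,\alpha\}$.
\end{itemize}

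Next I telescope along the composition \eqref{eq:soft_thresh}. Set $w^\gamma_0=u$ and $w^\gamma_\mu=\softt_{\mu,\gamma}w^\gamma_{\mu-1}$. Then
\[
u-\softt_\gamma u=\sum_{\mu=1}^E P_{\mu,\gamma}\bigl(w^\gamma_{\mu-1}\bigr),
\]
so the triangle inequality gives $\norm{u-\softt_\gamma u}\le\sum_{\mu=1}^E\norm{P_{\mu,\gamma}w^\gamma_{\mu-1}}$. Each of the factors $E$ in \eqref{eq:stabilityE} is meant to come from this sum. Both inequalities then reduce to a single comparison, a per-node lower bound
\begin{equation}\label{eq:pernode_plan}
\norm{P_{\mu,\gamma}\,w^{\gamma}_{\mu-1}}\le\norm{u-\softt_{\gamma}u}\qquad\text{for every }\mu=1,\dots,E.
\end{equation}
Granting \eqref{eq:pernode_plan}, the two inequalities follow quickly.
\begin{itemize}
\item For the first inequality, I would apply (a) to each telescoping term at threshold $\alpha$ and transfer it to threshold $\beta$. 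The intermediate tensors $w^\alpha_{\mu-1}$ and $w^\beta_{\mu-1}$ differ, so I would bound their discrepancy using the non-expansiveness of $P_{\mu,\cdot}$ and of the $\softt_{\mu,\cdot}$ maps. Then \eqref{eq:pernode_plan} at $\gamma=\beta$ bounds each of the $E$ terms by $\norm{u-\softt_\beta u}$.
\item For the second inequality, I would use the telescoping sum at $\gamma=\beta$, apply (b) to each term, and then use \eqref{eq:pernode_plan} at $\gamma=\alpha$. This gives $E\cdot\frac\beta\alpha\cdot E\norm{u-\softt_\alpha u}$.
\end{itemize}

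I expect the main obstacle to be \eqref{eq:pernode_plan} itself: the thresholding error at node $\mu$ must not exceed the total thresholding error, even though later nodes act on already modified tensors. My first attempt would use the orthogonality that holds within each single step. The tensors $\softt_{\mu,\gamma}v$ and $P_{\mu,\gamma}v$ share singular vectors and have nonnegative weights, so $\Re\inp{\softt_{\mu,\gamma}v}{P_{\mu,\gamma}v}\ge0$ and hence $\norm{v}^2\ge\norm{\softt_{\mu,\gamma}v}^2+\norm{P_{\mu,\gamma}v}^2$. Summing along the chain gives
\[
\sum_\mu\norm{P_{\mu,\gamma}w^\gamma_{\mu-1}}^2\le\norm{u}^2-\norm{\softt_\gamma u}^2,
\]
and I would then try to control the right-hand side by $\norm{u-\softt_\gamma u}$. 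If that route loses too much, my fallback is the variational characterization of $\softt_{\mu,\gamma}$ as the proximal map of $\gamma\norm{\cM_\mu(\cdot)}_*$. Comparing $w^\gamma_{\mu-1}$ with the final output $\softt_\gamma u$ through this characterization should bound the error at node $\mu$ by the total error, with a possible extra factor that would need to stay within the powers of $E$ allowed in the statement.
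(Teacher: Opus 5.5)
\paragraph{Gap: the per-node bound is unproved, and the proposed route to it fails.} Your reduction to a single tensor, the facts (a) and (b), and the telescoping identity are correct. However, the argument rests on the per-node bound $\norm{P_{\mu,\gamma}w^\gamma_{\mu-1}}\le\norm{u-\softt_\gamma u}$, which you leave unproved. The route you propose for it cannot work. The orthogonality argument gives $\sum_\mu\norm{P_{\mu,\gamma}w^\gamma_{\mu-1}}^2\le\norm{u}^2-\norm{\softt_\gamma u}^2$, but the right-hand side is not controlled by any fixed multiple of $\norm{u-\softt_\gamma u}^2$. Already for $E=1$ and a single singular value $\sigma>\gamma$, you get $\norm{u}^2-\norm{\softt_\gamma u}^2=2\sigma\gamma-\gamma^2$ against $\norm{u-\softt_\gamma u}^2=\gamma^2$. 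The proximal-map fallback is not specified enough to assess.

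\paragraph{Threshold mismatch.} Even granting the per-node bound, the two inequalities do not follow as you describe, because the thresholds do not match. In the first inequality you arrive at $P_{\mu,\beta}w^\alpha_{\mu-1}$, and in the second at $P_{\mu,\alpha}w^\beta_{\mu-1}$, whereas your per-node bound only concerns $P_{\mu,\gamma}w^\gamma_{\mu-1}$. Patching this by non-expansiveness leaves you with $\norm{w^\alpha_{\mu-1}-w^\beta_{\mu-1}}$. That quantity involves partial thresholding errors at level $\alpha$ and is not bounded by $\norm{u-\softt_\beta u}$, so the argument becomes circular.

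\paragraph{The missing idea.} Compare everything with the single-node errors of the \emph{original} tensor, $d^\gamma_\mu(u):=\norm{u-\softt_{\mu,\gamma}u}$, instead of with intermediate tensors. In a dimension tree, for any two matricizations one of $s_\nu$, $s_\nu^{\mathsf c}$ is contained in $s_\mu$ or in $s_\mu^{\mathsf c}$. Also $S_\gamma(X)=XD=D'X$ with positive semidefinite contractions $D$, $D'$. Hence $\softt_{\nu,\gamma}$ acts on $\cM_\mu(\cdot)$ as a one-sided multiplication by a contraction, so the singular values of every $\mu$-matricization can only decrease along the chain. This has two consequences:
\begin{itemize}
\item $\sigma_{\mu,k}(w^{\gamma'}_{\mu-1})\le\sigma_{\mu,k}(u)$ for every $\gamma'$. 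By monotonicity of $\min\{\cdot,\gamma\}$, this gives $\norm{P_{\mu,\gamma}w^{\gamma'}_{\mu-1}}\le d^\gamma_\mu(u)$.
\item $\sigma_{\mu,k}(\softt_\gamma u)\le(\sigma_{\mu,k}(u)-\gamma)_+$. Mirsky's inequality then gives $d^\gamma_\mu(u)\le\norm{u-\softt_\gamma u}$.
\end{itemize}
The resulting two-sided estimate $\max_\mu d^\gamma_\mu(u)\le\norm{u-\softt_\gamma u}\le E\max_\mu d^\gamma_\mu(u)$ is \cite[Lemma~3.4]{BS17}, which the paper invokes directly. Apply your (a) and (b), i.e.\ \cite[Lemma~10]{BDS:25}, to $d^\alpha_\mu(u)$ and $d^\beta_\mu(u)$, now for the same tensor $u$. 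This yields both inequalities in \eqref{eq:stabilityE} in two lines, with no threshold mismatch.
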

	
	\begin{proof}
		By \cite[Lemma 10]{BDS:25}, 
				\begin{equation*}
			\normJ{\mathbf u - \cS_{\mu,\alpha}\mathbf u } \leq \normJ{\mathbf u - \cS_{\mu,\beta}\mathbf u} \leq \frac{\beta}{\alpha} \normJ{ \mathbf u - \cS_{\mu,\alpha}\mathbf u }
		\end{equation*}
		for all $\mu = 1,2,\dots, E = 2d-3$. We then apply \cite[Lemma 3.4]{BS17}: with $d_\mu^\alpha(\mathbf u) := \normJ{ \mathbf u - \cS_{\mu,\alpha}\mathbf u}$, 
		\begin{equation*}
			\normJ{ \mathbf u - \cS_\alpha\mathbf u } \leq E \max_{\mu=1,\dots,E} d_\mu^\alpha(\mathbf u) \leq E \max_{\mu=1,\dots,E} d_\mu^\beta(\mathbf u) \leq E \normJ{ \mathbf u - \cS_\beta\mathbf u}
		\end{equation*}
		and
		\begin{equation*}
			\normJ{ \mathbf u - \cS_\beta\mathbf u } \leq E \max_{\mu=1,\dots,E} d_\mu^\beta(\mathbf u) \leq E \frac{\beta}{\alpha} \max_{\mu=1,\dots,E} d_\mu^\alpha(\mathbf u) \leq E \frac{\beta}{\alpha} \normJ{ \mathbf u - \cS_\alpha\mathbf u },
		\end{equation*}
		where we used again \cite[Lemma 10]{BDS:25} to obtain the respective second inequalities.
	\end{proof}

	\section{Gaussian reference solution and Monte Carlo error estimator}\label{app:gaussian_reference}
    Since \eqref{eq:BCO_Hamiltonian} is quadratic, the solution associated with \eqref{eq:init_64d} remains Gaussian. With $p=-i\nabla$, the Hamiltonian can be written in the form
    \[
    H=\textstyle\frac12 \displaystyle p^\top\Omega p+ \textstyle\frac12 \displaystyle x^\top A x,\qquad
    \Omega=\operatorname{diag}(\omega_1,\ldots,\omega_d),\qquad
    A=\Omega+\textstyle\frac1{10} \displaystyle(\bbone\bbone^\top-I).
    \]
    Let $S=\Omega^{\frac12}$ and with the eigenvalue decomposition $SAS=O\operatorname{diag}(\nu_1^2,\ldots,\nu_d^2)O^\top$, let $C=SO$.
    The Bogoliubov canonical transformation $q=C^{-1}x$ and $p = C^{-\top}P$, reduces $H$ to independent oscillators with frequencies $\nu_j$. We refer to \cite{Kustura2019} for more general canonical transformations.
    
    For $\psi(0,x)=a_0e^{-\frac12x^\top G_0x}$, define $B_0=C^\top G_0C$ and
    \begin{align*}
    D_c(t)&=\operatorname{diag}(\cos(\nu_jt)),&
    D_s(t)&=\operatorname{diag}(\sin(\nu_jt)),\\
    Z(t)&=D_c(t)+i\operatorname{diag}(\nu_j^{-1})D_s(t)B_0,&
    B(t)&=\bigl(D_c(t)B_0+i\operatorname{diag}(\nu_j)D_s(t)\bigr)Z(t)^{-1}.
    \end{align*}
    Let $R=\operatorname{diag}(\nu_j)^{-\frac12}B_0\operatorname{diag}(\nu_j)^{-\frac12}$ and $\mathcal K=(I-R)(I+R)^{-1}$.
 Then writing $G(t)=C^{-\top}B(t)C^{-1}$,
    the Gaussian reference is
    \begin{equation}\label{eq:gaussian_reference}
    \psi(t,x)=a(t)\exp\left(-\frac12x^\top G(t)x\right),
    \end{equation}
    where the amplitude is computed directly as
    \[
    a(t)=a_0\exp\biggl(
    -\frac{it}{2}\sum_{j=1}^d\nu_j
    -\frac12\operatorname{tr}\operatorname{Log}(I+\operatorname{diag}(\mathrm e^{-2i\nu_jt})\mathcal K)
    +\frac12\operatorname{Log}\det(I+\mathcal K)\biggr),
    \]
    and $\operatorname{Log}$ denotes the principal matrix logarithm \cite{CR06}. For \eqref{eq:init_64d}, $G_0=I$ and $a_0=\pi^{-d/4}$. This formula gives $a(0)=a_0$. We next use this reference solution to estimate the continuous $L^2$-error
    \begin{equation*}
        e(t_n)^2=\int_{\R^d}\left|\psi(t_n,x)-u_{\rm num}(t_n,x)\right|^2\,dx
        =N_{\rm ref}(t_n)^2\int_{\R^d}\left|1-\frac{u_{\rm num}(t_n,x)}{\psi(t_n,x)}\right|^2\rho_{t_n}(x)\,dx,
    \end{equation*}
    where
    \[
    N_{\rm ref}(t)^2
    =|a(t)|^2\frac{\pi^{d/2}}{\sqrt{\det(\operatorname{Re}G(t))}},
    \qquad
    \rho_t(x)=\frac{|\psi(t,x)|^2}{N_{\rm ref}(t)^2}
    =\frac{\sqrt{\det(\operatorname{Re}G(t))}}{\pi^{d/2}}
    \mathrm e^{-x^\top\operatorname{Re}G(t)x}.
    \]
    This expectation can be estimated by Monte Carlo sampling from $\rho_{t_n}$. If $U(t)^\top U(t)=\operatorname{Re}G(t)$ is a Cholesky factorization and $z\sim\mathcal N(0,I)$, then
    $X= 2^{-1/2} U(t)^{-1}z$
    has density $\rho_t$. The twisted numerical solution is transformed back to the physical variable and then evaluated at $X$. For independent samples $X_1,\ldots,X_{N_{\rm MC}}$, we use
    \[
    \widehat e(t_n)=
    \left[
    \frac{N_{\rm ref}(t_n)^2}{N_{\rm MC}}
    \sum_{m=1}^{N_{\rm MC}}
    \left|1-\frac{u_{\rm num}(t_n,X_m)}{\psi(t_n,X_m)}\right|^2
    \right]^{\frac12}
    \]
    to approximate $e(t_n)$ with $N_{\rm MC}=10^5$. 
	
	\end{appendix}
	
\end{document}